\renewcommand*\env@matrix[1][*\c@MaxMatrixCols c]{%
  \hskip -\arraycolsep
  \let\@ifnextchar\new@ifnextchar
  \array{#1}}
\newtheorem{corollary}{Corollary}[section]
\newtheorem{proposition}{Proposition}[section]
\newtheorem{theorem}{Theorem}[section]
\newtheorem{lemma}{Lemma}[section]
\theoremstyle{definition}
\newtheorem{definition}{Definition}[section]
\newtheorem{remark}{Remark}[section]
\newtheorem{example}{Example}[section]
\newtheorem{notation}{Notation}[section]
\title{Shi variety corresponding to an affine Weyl group}
\author{Nathan Chapelier-Laget}
\begin{document}

\maketitle

\begin{abstract}
Let $W$ be an irreducible Weyl group and $W_a$ its affine Weyl group. In this article we show that there exists a bijection between $W_a$ and the integral points of an affine variety, denoted $\widehat{X}_{W_a}$, which we call the Shi variety of $W_a$. In order to do so, we use Jian-Yi Shi's characterization of alcoves in affine Weyl groups. We then study this variety further.  We highlight combinatorial properties of the irreducible components of $\widehat{X}_{W_a}$ and we show how they are related to a fundamental parallelepiped  $P_{\mathcal{H}}$. 
\end{abstract}

\bigskip
\bigskip

  \tableofcontents

\section{Introduction}

One of the reasons that Coxeter groups are so well explored is due to their appearance as a fundamental structure in several mathematical theories, such as group theory, representation of algebras,  combinatorics, geometry, mathematical physics etc.

 The cell theory of a Coxeter group $W$, as defined by Kazhdan and Lusztig in \cite{kazhLus}, is related to many disparate and deep topics in mathematics, including the representation theory of $W$ and its Hecke algebra, the representation theory of algebraic groups, finite groups of Lie type, the geometry of unipotent conjugacy classes in simple complex algebraic groups, Lie algebras, primitive
ideals in universal enveloping algebras, singularities of Schubert varieties etc. 

One of the main challenge of this theory is to give a precise description of the cells of $W$ and to explore their properties. Describing the cells is in general a very difficult question since there is no general method. Many questions on this subject are still open.

When the Coxeter group is an affine Weyl group, this theory yielded a lot of research works in recent decades. It was first studied by G. Lusztig in \cite{LusCell} where he described all the cells of the affine Weyl groups of rank $\leq 2$.

In 1983, J.Y. Shi described all the cells of $W(\widetilde{A}_n)$ (see \cite{JYS4}). To do so he introduced the notion of sign types, which leads ,inter alia, to the notion of signed regions (the notion of signed region is  discussed in Sections \ref{connection Shi cell} and \ref{consequences}). These notions stem from a characterization of the elements of $W(\widetilde{A}_n)$ in terms of certain inequalities. A few years later he generalized in \cite{JYS1, JYS2} this work to any other affine Weyl group (see Theorem \ref{thJYS1}). We refer to this as \emph{Shi's characterization}.

 In this paper we revisit, extend and explore further Shi's characterization. We then introduce and investigate an affine variety $\widehat{X}_{W_a}$, called \emph{the Shi variety} of $W_a$, associated to an affine Weyl group $W_a$. 
 
 The main result of this article, which will be stated  in Theorem \ref{TH central}, can be reformulated as follows:
 
 \begin{theorem}\label{theo intro}
 We have a bijection between $W_a$ and the integral points of $\widehat{X}_{W_a}$, that is $$W_a \simeq \widehat{X}_{W_a}(\mathbb{Z}).$$
 \end{theorem}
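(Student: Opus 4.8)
The plan is to deduce the statement directly from Shi's characterization of alcoves, Theorem~\ref{thJYS1}, which is exactly the input that the main result Theorem~\ref{TH central} makes precise. Recall that Shi attaches to each $w\in W_a$ the family of integers $k(w)=\bigl(k(w,\alpha)\bigr)_{\alpha\in\Phi^+}$, where $k(w,\alpha)$ records which strip between consecutive hyperplanes $H_{\alpha,m}$ contains the alcove of $w$; Theorem~\ref{thJYS1} states that $w\mapsto k(w)$ is injective and that its image is precisely the set of $(x_\alpha)_{\alpha\in\Phi^+}\in\mathbb{Z}^{\Phi^+}$ satisfying an explicit finite system $\mathcal{S}$ of two-sided inequalities, one for each pair $\alpha,\beta\in\Phi^+$ with $\alpha+\beta\in\Phi^+$, of the form $a_{\alpha,\beta}\le x_{\alpha+\beta}-x_\alpha-x_\beta\le b_{\alpha,\beta}$ (with $a_{\alpha,\beta}=0$ and $b_{\alpha,\beta}=1$ in the simply laced case; in general the bounds depend only on the relative lengths of $\alpha,\beta,\alpha+\beta$). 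The first step is to pin down the definition of $\widehat{X}_{W_a}$ accordingly, as the affine subvariety of $\mathbb{A}^{\Phi^+}$ (affine space with coordinates $X_\alpha$, $\alpha\in\Phi^+$) cut out by the equations
\[
\prod_{j=a_{\alpha,\beta}}^{b_{\alpha,\beta}}\bigl(X_{\alpha+\beta}-X_\alpha-X_\beta-j\bigr)=0,
\]
one per pair occurring in $\mathcal{S}$. This is exactly the equational shadow of $\mathcal{S}$ on integral points, since for $N\in\mathbb{Z}$ one has $\prod_{j=a}^{b}(N-j)=0$ if and only if $a\le N\le b$.

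With the definition so arranged, the bijection is formal. For every $w\in W_a$ the family $k(w)$ satisfies $\mathcal{S}$ by Theorem~\ref{thJYS1}, so each displayed product vanishes at $k(w)$; hence $k$ restricts to a map $W_a\to\widehat{X}_{W_a}(\mathbb{Z})$, injective because $w\mapsto k(w)$ is. Conversely, if $(x_\alpha)\in\widehat{X}_{W_a}(\mathbb{Z})$ then for each pair the integer $x_{\alpha+\beta}-x_\alpha-x_\beta$ is a root of the single-variable polynomial $\prod_{j=a_{\alpha,\beta}}^{b_{\alpha,\beta}}(T-j)$, so it lies in $\{a_{\alpha,\beta},\dots,b_{\alpha,\beta}\}$; thus $(x_\alpha)$ satisfies $\mathcal{S}$ and Theorem~\ref{thJYS1} produces a unique $w\in W_a$ with $k(w)=(x_\alpha)$. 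This gives $W_a\simeq\widehat{X}_{W_a}(\mathbb{Z})$.

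The only step that is not bookkeeping, hence the main obstacle, is to be sure that the defining equations are a faithful equational translation of $\mathcal{S}$, with neither missing relations nor spurious integral solutions. Two points need care. First, in the non-simply laced types the admissible range $[a_{\alpha,\beta},b_{\alpha,\beta}]$ genuinely varies with the relative root lengths and must be extracted carefully from \cite{JYS1,JYS2} (and perhaps expressed via coroots rather than roots). Second, a non-simple positive root $\gamma$ usually decomposes as $\alpha+\beta$ inside $\Phi^+$ in several ways (already $\alpha_1+\alpha_2+\alpha_3$ in type $A_3$), so one must check that the finite family of equations above already forces all the ensuing compatibility constraints among the $x_\alpha$ --- equivalently, that Shi's list $\mathcal{S}$ is complete. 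Both points are precisely the substance of Shi's characterization, which is why Theorem~\ref{theo intro} costs little once that theorem is granted; moreover the argument is insensitive to the exact presentation of $\widehat{X}_{W_a}$, since any correct definition must have $\widehat{X}_{W_a}(\mathbb{Z})$ equal to the image of the Shi map. This equational description is also what the later sections build on: the irreducible components of $\widehat{X}_{W_a}$ turn out to be the affine-linear subspaces obtained by freezing a jointly realizable tuple of defects $(x_{\alpha+\beta}-x_\alpha-x_\beta)$, and counting and organizing these is what yields the connection with the fundamental parallelepiped $P_{\mathcal{H}}$.
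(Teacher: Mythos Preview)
Your deduction of the bijection from Shi's characterization is correct in spirit, but you have guessed a different definition of $\widehat{X}_{W_a}$ from the one the paper actually uses. You cut out the variety by one product equation per Shi inequality, indexed by pairs $(\alpha,\beta)$ with $\alpha+\beta\in\Phi^+$. The paper instead proceeds in two steps. First, Theorem~\ref{polynome} produces for every $\theta\in\Phi^+$ a linear polynomial $P_\theta$ in the \emph{simple}-root variables alone with $k(w,\theta)=P_\theta(w)+\lambda_\theta(w)$ and $0\le\lambda_\theta(w)\le h(\theta^\vee)-1$; this gives one equation per root (not per pair) and a larger variety $X_{W_a}=\bigsqcup_{\lambda\ \text{admissible}}X_{W_a}[\lambda]$, a disjoint union of parallel affine $n$-planes. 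Second, the Shi variety is \emph{defined} as the subunion $\widehat{X}_{W_a}=\bigsqcup_{\lambda\ \text{admitted}}X_{W_a}[\lambda]$, where ``admitted'' means that $\lambda$ itself satisfies Shi's system $S[W_a]$. The bijection in Theorem~\ref{TH central} then follows from Theorem~\ref{theorem admitted}: an integral point of $X_{W_a}[\lambda]$ lies in $\iota(W_a)$ if and only if $\lambda$ is admitted, which---as you correctly anticipate---is just Shi's characterization applied once more.

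So both routes bottom out in Theorem~\ref{thJYS1}/\ref{thJYS3}, and on integral points your variety and the paper's $\widehat{X}_{W_a}$ agree. What the paper's route buys is that the irreducible components are exhibited from the outset as the translates $X_{W_a}[0]+\lambda$ indexed by admitted vectors; this is precisely what makes the identification with alcoves in $P_{\mathcal H}$ and the component count in Theorem~\ref{TH central}(2) immediate. In your presentation the components are indexed by compatible tuples of pairwise defects, and recovering the same parameterization requires an extra argument (essentially the content of Proposition~\ref{simplification} plus the admissible/admitted distinction). One technical correction: outside the simply laced case the inequality of Theorem~\ref{thJYS1} is \emph{not} of the shape $a\le x_{\alpha+\beta}-x_\alpha-x_\beta\le b$, since the factors $\lVert\alpha\rVert^2$, $\lVert\beta\rVert^2$, $\lVert\alpha+\beta\rVert^2$ do not cancel; to get clean product equations you must use the coroot form of Theorem~\ref{thJYS3}, where $\gamma^\vee=\alpha^\vee+\beta^\vee$ always yields $k_\alpha+k_\beta\le k_\gamma\le k_\alpha+k_\beta+1$.
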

 
\subsection{Notations and setup} For further developments we need to recall some basics about affine Weyl groups (all this material is explained in detail in Section \ref{section aff}).

Let $V$ be a Euclidean space, $\Phi \subset V$ be an irreducible crystallographic root system, and $W$ be the corresponding Weyl group. Associated to $\Phi$ there exists an infinite hyperplane arrangement, denoted $\mathcal{H}$, known as the affine Coxeter arrangement. This hyperplane arrangement cuts $V$ into simplices of the same volume which are called alcoves. The set of alcoves is denoted by $\mathcal{A}$.

The associated affine Weyl group $W_a$ acts regularly on $\mathcal{A}$, implying in particular that there exists a one-to-one correspondence between $W_a$ and $\mathcal{A}$ (see \cite{Hum} ch.4 or Section \ref{section aff} for more details). Let $A_w$ be the alcove  corresponding to $w \in W_a$. Then $A_w$ gives rise to a collection of integers $k(w,\alpha)$ for $\alpha \in \Phi^+$ (see Section \ref{shi para} for the definition of $k(w,\alpha)$).

 \subsection{Connection between the Shi variety and the cells}\label{connection Shi cell}
 This section can be read independently of the rest of this article, but for the reader who would be interested in a link with the cell theory, here it is.
 
  As stated in Theorem \ref{theo intro}, we realize the elements $w \in W_a$ as integral points $\iota(w) \in \mathbb{Z}^{|\Phi^+|}$ of $\widehat{X}_{W_a}$ (the map $\iota$ is introduced in Section \ref{phi-rep}). Then using J.Y. Shi's work \cite{JYS4},  we can determine in type $A$ which cell contains $w$  by looking at the sign of each coordinate of $\iota(w)$.
  More precisely, for an integer $k$ we define  ${sg(k) = 0}$ if $k=0$, $sg(k) = +$ if $k >0$ and $sg(k) = -$ if $k< 0$. For a $\Phi^+$-tuple of integers $(k_{\alpha})_{\alpha \in \Phi^+}$ we define the map $Sg$ as $Sg((k_{\alpha})_{\alpha \in \Phi^+})= (sg(k_{\alpha}))_{\alpha \in \Phi^+}$. We denote 
 $$
 Sg(w) := Sg((k(w,\alpha))_{\alpha \in \Phi^+}).
 $$ 
 
 A \emph{signed region} $\Gamma$ of  $W_a$ is by definition a subset of $W_a$ such that $Sg(w) =Sg(w')$ for all $w, w' \in \Gamma$. Then we can re-express the above statement, namely: if $Sg(w) =Sg(w')$ then $w$ and $w'$ are in the same cell. Moreover, J.Y. Shi established that each cell in type $A$ is a union of signed regions \cite{JYS4}.
 
 To connect the cells with the Shi variety we need to introduce the notion of generalized orthant. In the literature an orthant of $\mathbb{R}^m$ is a subset defined by constraining each Cartesian coordinate to be nonnegative or nonpositive, which gives $2^m$ such subspaces. We call generalized orthant a supspace of  $\mathbb{R}^m$ defined by constraining each Cartesian coordinate to be strictly positive or strictly negative or zero. Therefore, it is clear that any signed region of $W_a$ is characterized in $\widehat{X}_{W_a}$ by the intersection of $\widehat{X}_{W_a}(\mathbb{Z})$ and a generalized orthant. Thus, a cell is a union of intersections of $\widehat{X}_{W_a}(\mathbb{Z})$ with some generalized orthants.
 
 \subsection{Study of the Shi variety}
 After defining the Shi variety in Section \ref{decompo coeff}, we subsequently study the set of its irreducible components, which are affine subspaces. 
 
As mentioned above,  J.Y. Shi characterized in \cite{JYS1} any alcove $A_w$ by a $\Phi^+$-tuple of integers $(k(w,\alpha))_{\alpha \in \Phi^+} \in \mathbb{R}^{|\Phi^+|}$ subject to certain conditions (see Theorem \ref{thJYS1}). When we look at these $\Phi^+$-tuples, some linear relations appear between the coefficients. For example for $W(\widetilde{A}_2)$ embedded in $\mathbb{R}^3$,  the positions of the elements of length 4 and length 5 suggest that these elements lie on two parallel hyperplanes (see Figure \ref{points longueur 4 et 5}).

\begin{figure}[h!] \label{points longueur 4 et 5}
\centering
\includegraphics[scale=0.27]{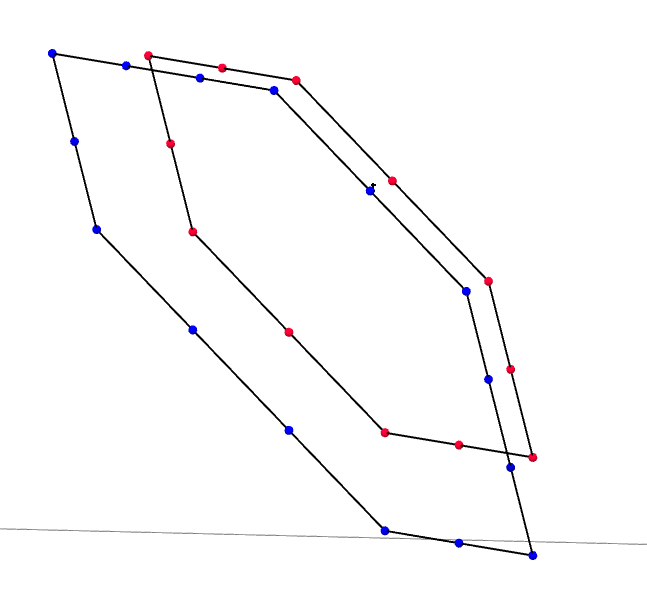} 
\caption{Elements of length 4 (red) and 5 (blue) in $W(\widetilde{A}_2)$.}
\end{figure}

 We show in this article that the previous observation is exactly what occurs, and in the above example, the equations of the hyperplanes are ${X_{13} = X_{12} + X_{23}}$ and $X_{13} = X_{12} + X_{23} +1$. \\

\noindent \textbf{The plan of this work.} The present paper has several goals that we describe now. First we recall in Section \ref{section aff} some standard definitions and terminologies related to affine Weyl groups.

$\bullet$ The first goal is to introduce the necessary tools in order to see elements of $W_a$ as isometries in  $\bigoplus_{\theta \in \Phi^+}\mathbb{R}e_{\theta}$. This leads us to define the $\Phi^+$-representation in Section \ref{phi-rep}. This new object has an important role in the understanding of the natural action of $W_a$ on the variety  $\widehat{X}_{W_a}$. To do so, we first generalize a few formulas of \cite{JYS1} about the coefficients $k(w,\alpha)$. This is done in Section \ref{phi-rep} and mainly in Proposition \ref{proposition k(sw,beta)}.

$\bullet$ The second goal, achieved in Section \ref{affine var}, is to build the variety $\widehat{X}_{W_a}$, whose integral points $\widehat{X}_{W_a}(\mathbb{Z})$ are in bijection with $W_a$, and to study the set of its irreducible components. The above statement is one of the main result of this article (see Theorem \ref{TH central}). To establish this, we first obtain a set of equations that cuts out an affine variety denoted $X_{W_a}$, and where the $\Phi^+$-tuples of integers of $W_a$ are solutions of these equations. However, this variety is too large and we only have an injection of $W_a$ into $X_{W_a}(\mathbb{Z})$. We then see how to shrink $X_{W_a}$ in order to get a one-to-one correspondence between the integers points of a subvariety and $W_a$. This is where we introduce the notion of admissible and admitted vectors (see Definitions \ref{admissible vector} and \ref{admitted vector}). These definitions yield a decomposition of $X_{W_a}$ as follows (see Theorem \ref{TH central}):
$$
 X_{W_a} := \bigsqcup\limits_{\lambda~\text{admissible}} X_{W_a}[\lambda],
 $$
whereas the subvariety $\widehat{X}_{W_a}$ inherits the decomposition:
$$
 \widehat{X}_{W_a} := \bigsqcup\limits_{\lambda~\text{admitted}} X_{W_a}[\lambda],
 $$

\noindent where the $X_{W_a}[\lambda]$ are the irreducible components of these two varieties. The difference between being admissible or admitted leads us to recover a polytope $P_{\mathcal{H}}$, which has  already been well studied (see \cite{BOURB} ch VI, $\S$ 1.10 or \cite{Hum} 4.9 or \cite{RichardKane} pages 131-134), and which plays a crucial role thereafter. Indeed, thanks to it, we give among others the number of irreducible components of $\widehat{X}_{W_a}$ (see Theorem \ref{TH central}).

\subsection{Some Consequences of the Shi variety}\label{consequences}

Besides revisiting Shi's characterization and extending it in terms of equations, the Shi variety encompasses several notions, both algebraic and combinatorial. 

 \textbf{-} As a first consequence, that we don't explain in this paper, the Shi regions (regions defined combinatorially in the Tits cone) are easy to construe and describe in terms of $\widehat{X}_{W_a}$ since they are naturally in bijection with the signed regions introduced in \cite{JYS2} and since the signed regions lie naturally in the Shi variety.  Furthermore, as mentioned previously, J.Y. Shi used in \cite{JYS4} the signed regions to describe the Kazhdan-Lusztig cells in type $A$. It is then natural to wonder what kind of results related to cells we might recover using this variety, first in type $A$, and then in other types ?

\textbf{-} The second consequence of the Shi variety, established in \cite{NC2}, is the relation between a specific conjugacy class of $W(A_n)$ and the set of irreducible components of $\widehat{X}_{W(\widetilde{A}_n)}$. More precisely we show that there is a natural bijection between the conjugacy class of $(1~2~\cdots~n+1)$, that we call the circular permutations, and the aforementioned set (see \cite{NC2} Theorem \textcolor{red}{1.2}). This result gives rise to several questions related to conjugacy class theory, such as: What can we say about other conjugacy classes in terms of a variant of the Shi variety (in any type) ?

\textbf{-} The paramaterization of the irreducible components of $\widehat{X}_{W_a}$ by the admitted vectors (see Theorem \ref{TH central}) gave rise to several interesting questions in type $A$. The set of admitted vector is partially ordered. Since this set is in bijection with the set of circular permutations, we obtain a partial order on this conjugacy class. In a joint work with A. Abram and C. Reutenauer, we study in \cite{NC3}  this poset with its different instances. We show in particular that this poset is a ranked poset of height ${n}\choose{3}$, showing us at the same time that it is cubic in $n$, which is rather unusual as we were told by R. Stanley. We then show that these two posets are isomorphic. From this isomorphism we derive several consequences, such as: it is a semidistributive lattice. We also connect these posets with Eulerian numbers, Young's lattice and triangulations of an $n$-gon together with their mutations. 

Finally, we introduce in \cite{NC3} a topological object, that we call  \emph{the line diagram}, and which allows us to describe precisely the bijection between our posets. The lines diagrams are connected with some knot theory transformations: the Reidemeister moves. This part is still an ongoing project.

\textbf{-} Using \cite{NC3, NC2} we established a mysterious connection between the two aforementioned posets and the poset of the right weak order of $W(A_n)$: the Hasse diagram of the poset of irreducible components of $\widehat{X}_{W(\widetilde{A}_n)}$ has the same number of edges as the poset of the right weak order of $W(A_n)$, even though these two posets are not isomorphic.

\textbf{-} In the light of the work done in \cite{NC3}, it was natural to investigate the combinatorial properties of this poset. In \cite{NC5} we study the poset of irreducible components of $\widehat{X}_{W_a}$ for any affine Weyl group and we show that it also has a structure of semidistributive lattice.


\textbf{-} The poset of irreducible components of $\widehat{X}_{W_a}$ seems  to be related to another poset introduced by D. Speyer et al in \cite{GQS}. In this article the authors study the looping case of Mozes’s game of numbers, which concerns the orbits in the reflection representation of affine Weyl groups situated on the boundary of the Tits cone. Using an operation called \say{firing a vertex} on the set of vertices of a Dynkin diagram, they obtain a poset which is isomorphic to the dual of triangulation of the unit hypercube by Weyl chambers. Their hypercube is what we call in this article the fundamental  parallelepiped $P_{\mathcal{H}}$. It would be interesting  to connect these two posets, which seem to be same.

\textbf{-} In \cite{NC4} we study the cohomology in degree 1 of $W$. When we consider the irreducible components of $\widehat{X}_{W_a}$ that contain the generators of $W$, we get a natural framework in order to understand, from a  geometrical point of view, the group $H^1(W, \mathbb{Z}\Phi)$, but also the group $H^1(W, A\Phi)$ where $A$ is a ring satisfying $\mathbb{Z} \subset A \subset \mathbb{R}$. To do so, we connect the Cartan matrix of $\Phi$ with the equivalence relation corresponding to $H^1(W, \mathbb{Z}\Phi)$ when we identify $H^1(W, \mathbb{Z}\Phi)$ with the set of sections of a certain group extension naturally associated to $W$. We see then that the structure of $A$-module of $H^1(W, A\Phi)$ appears naturally in $\widehat{X}_{W_a}$.

\section{Affine Weyl groups and Shi parameterization} \label{section aff}

Let $V$ be a Euclidean space with inner product $(-, -)$. We denote $||x|| = \sqrt{(x,x)} $ for $x \in V$. Let $\Phi$ be an irreducible crystallographic root system in $V$. We assume here that $\Phi$ is essential, that is, taking the lattice ${\mathbb{Z} \Phi}$, one has ${\mathbb{Z}\Phi \otimes_{\mathbb{Z}}\mathbb{R}= V}$.  From now on, when we will say \say{root system} it will always mean irreducible essential crystallographic root system.

Let $W$ be the \emph{Weyl group} associated to $\mathbb{Z}\Phi$, that is the maximal (for inclusion) reflection subgroup of $O(V)$ admitting $\mathbb{Z}\Phi$ as a $W$-equivariant lattice.   

Let $\alpha \in \Phi$. We write 
$$
\begin{array}{ccccc}
s_{\alpha}  & : & V & \longrightarrow & V \\
                 &   & x & \longmapsto     & x-2\frac{( \alpha, x )}{(\alpha, \alpha )}\alpha.
\end{array}
$$

It is known that the Coxeter group associated to $\Phi$, i.e the subgroup of $O(V)$ generated by the reflections $s_{\alpha}$, is actually the Weyl group $W$. Each Weyl group has a structure of Coxeter group with set of Coxeter generators $S:=\{s_{\alpha_1},..,s_{\alpha_n}\}$.

Because of the classification of irreducible crystallographic root systems, we know that there are at most two possible root lengths in $\Phi$. We call short root the shorter ones. We require here that $||\alpha|| = 1$ for any short root $\alpha \in \Phi^+$. 

Let $\alpha \in \Phi$ such that  $\alpha = a_1\alpha_1 + \cdots + a_n\alpha_n$ with $a_i \in \mathbb{Z}$. The height of $\alpha$ (with respect to $\Delta$) is defined by the number $h(\alpha) = a_1 + \cdots+ a_n$. Height gives us an organizational principle for making inductive proofs. Height also provides a preorder on $\Phi^+$ defined as $\alpha \leq \beta$ if and only if $h(\alpha) \leq h(\beta)$. We also see that $h(\alpha + \beta) = h(\alpha) + h(\beta)$ and $h(-\alpha)=-h(\alpha)$ for all $\alpha, \beta, \alpha + \beta \in \Phi$. We denote by $-\alpha_0$ the \emph{highest short root} of $\Phi$.

\subsection{Affine Weyl groups}\label{affine Weyl groups}
From now on we will identify $\mathbb{Z}\Phi$ and the group of its associated translations. For $\alpha \in \Phi$ we write $\alpha^{\vee}:= \frac{2\alpha}{( \alpha, \alpha )}$. Let $k \in \mathbb{Z}$. Define the affine reflection as follows 

$$
\begin{array}{ccccc}
s_{\alpha,k}  & : & V & \longrightarrow & V \\
                 &   & x & \longmapsto     & x-(2\frac{( \alpha, x )}{( \alpha, \alpha )}-k)\alpha.
\end{array}
$$ 

We consider the subgroups $W_a$ and $W_a^{\vee}$ of Aff($V$) defined as follows
$$
W_a = \langle s_{\alpha,k}~|~\alpha \in \Phi, ~k \in \mathbb{Z \rangle}~~~~\text{and}~~~~W_a^{\vee} = \langle s_{\alpha^{\vee},k}~|~\alpha \in \Phi, ~k \in \mathbb{Z \rangle}.
$$

It is known that  $W_a \simeq  \mathbb{Z}\Phi\rtimes W $ (see \cite{Hum}, Ch 4). The group $W_a$ is called the \emph{affine Weyl group} associated to $\Phi$. The Coxeter group structure on $W$ induces a Coxeter group structure on $W_a$. The set $ S_a := \{s_{\alpha_1},\dots,s_{\alpha_n}\} \cup \{s_{-\alpha_0,1}\} $  is a set of Coxeter generators of $W_a$. For short we will write $S_a = \{ s_0, s_1, \dots s_n\}$ where $s_0 := s_{-\alpha_0,1}$ and $s_i = s_{\alpha_i}$ for $i =1,\dots, n$. Throughout this article, affine Weyl group means irreducible affine Weyl group.

Let us make the following comment. In the literature about Weyl groups and affine Weyl groups it is more common to define the affine Weyl groups (associated to the root system $\Phi$) with the reflections associated to the hyperplanes $\{ x \in V~|~(x,\alpha)=k\}$. Therefore what we would call the affine Weyl group associated to $\Phi$ would commonly be described in the literature as the affine Weyl group associated to $\Phi^\vee$, that is $W_a^\vee$. Our choice to call $W_a$ the affine Weyl group associated to $\Phi$ is because in \cite{JYS1} the author gave the definition of $W_a$ via the hyperplanes $\{ x \in V~|~(x,\alpha^{\vee})=k\}$, that is via the reflections $s_{\alpha,k}$, and since a lot of this article is based on Jian-Yi Shi's work it is natural to keep his conventions.

\subsection{Shi parameterization}\label{shi para}

 For any $\alpha \in \Phi$, any $k \in \mathbb{Z}$ and any $m \in \mathbb{R}$, we define the hyperplanes 
$$
H_{\alpha,k} = \{x \in V~|~s_{\alpha,k}(x)=x \} = \{ x \in V~|~ ( x, \alpha^{\vee} ) = k\},
$$
the half spaces 
$$
H_{\alpha,k}^{^{+}} = \{ x \in V|~ k < ( x,\alpha^{\vee} ) \}\text{~and~}
H_{\alpha,k}^{^{-}} = \{ x \in V|~ ( x,\alpha^{\vee}) < k \},
$$

\noindent and the strips
\begin{align*}
H_{\alpha,k}^m & = \{x \in V~|~k < ( x ,\alpha^{\vee} ) < k+m \}  = H_{\alpha,k}^{^{+}} \cap H_{\alpha,k+m}^{^{-}}.
\end{align*}

We denote by $\mathcal{H}$ the set of all the hyperplanes $H_{\alpha,k}$ with $\alpha \in \Phi^+$, $k \in \mathbb{Z}$, and by $\mathcal{H}^{\vee}$ the set of all the hyperplanes $H_{\alpha^{\vee},k}$ with $\alpha \in \Phi^+$, $k \in \mathbb{Z}$. We have the relation $H_{-\alpha,k} = H_{\alpha,-k}$. Therefore we need only to consider the hyperplanes $H_{\alpha,k}$ with $\alpha \in \Phi^+$ and $k \in \mathbb{Z}$. This last remark is quite convenient, because when we look at the collection of all the hyperplanes in $V$, we can think of them as being indexed either by elements of $\Phi$ or by elements of $\Phi^+$.

Furthermore, if we have two hyperplanes $H_{\alpha,k}$ and $H_{\alpha,k'}$  the geometric space strictly contained between  them is 
$$
H_{\alpha,\min{(k,k')}}^{|k-k'|}.
$$

The lattice $\mathbb{Z}\Phi$ acts naturally on the strips and this action is given by $\tau_x H_{\alpha,k}^m  = H_{\alpha, k+(\alpha^{\vee},x)}^{m}$. Moreover $W$ also acts on the strips as isometries of $\mathbb{R}^n$ and it follows that $W_a$ acts also on the set of strips.

 The connected components of 
$$
 V ~\backslash \bigcup\limits_{\tiny{\begin{subarray}{c}
 ~ ~\alpha \in \Phi^{+} \\ 
  k \in \mathbb{Z}
\end{subarray}}}
H_{\alpha,k} 
$$
are called \emph{alcoves}. We denote $\mathcal{A}$ the set of all the alcoves and $A_e$ the alcove defined as $A_e = \bigcap_{\alpha \in \Phi^+} H_{\alpha,0}^1$. Since $W_a$ acts on the strips it also acts on $\mathcal{A}$. It turns out that this action on $\mathcal{A}$ is regular (see \cite{Hum} chapter 4). Thus, there is a bijective correspondence between the elements of $W_a$ and all the alcoves. This bijection is defined by $w \mapsto A_w$ where $A_w := wA_e$. We call $A_w$ the corresponding alcove associated to $w \in W_a$. Any alcove of $V$ can be written as an intersection of particular strips, that is there exists a $\Phi^+$-tuple of integers $(k(w,\alpha))_{\alpha \in \Phi^+}$ such that 
$$
A_w = \bigcap\limits_{\alpha \in \Phi^+}H_{\alpha, k(w,\alpha)}^1.
$$

For any $w \in W_a$ and any $\alpha \in \Phi$ we use the convention$$k(w,-\alpha) = -k(w,\alpha).$$

 In the setting of affine Weyl groups the length of any element $w \in W_a$ is easy to compute via the coefficients $k(w,\alpha)$. Indeed, thanks to Proposition 4.3 in \cite{JYS1} we have $$\ell(w)=\sum\limits_{\alpha \in \Phi^+}|k(w,\alpha)|.$$

In \cite{JYS1} J.Y. Shi gave a characterization of the possible $\Phi^+$-tuples $(k_\alpha)_{\alpha \in \Phi^+}$ over $\mathbb{Z}$ such that these tuples are the tuples of elements in $W_a$.  In 1999 the same author gave an easier statement of this characterization. We give below these two characterizations.

\begin{theorem}[\cite{JYS1}, Theorem 5.2]\label{thJYS1} 
Let $A = \bigcap\limits_{\alpha \in \Phi^+} H^1_{\alpha,k_{\alpha}}$ with $k_{\alpha} \in \mathbb{Z}$. Then $A$ is an alcove, if and only if, for all $\alpha$, $\beta \in \Phi^+$ satisfying  $\alpha + \beta \in \Phi^+$, we have the following inequality

\begin{footnotesize}
\begin{equation}\label{Shi ineq}
||\alpha||^2k_{\alpha} + ||\beta||^2k_{\beta} +1 \leq ||\alpha + \beta||^{2}(k_{\alpha+\beta} +1) \leq ||\alpha||^2k_{\alpha} + ||\beta||^2k_{\beta} + ||\alpha||^2+ ||\beta||^2 + ||\alpha+\beta||^2 -1.
\end{equation}
\end{footnotesize}
\end{theorem}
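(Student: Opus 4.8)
The plan is to prove the two directions of the equivalence separately, using the translation action of $\mathbb{Z}\Phi$ and the reflection action of $W$ on alcoves to reduce to a finite, essentially rank-two, computation. First I would establish the \emph{easy direction}: suppose $A = A_w = \bigcap_{\alpha \in \Phi^+} H^1_{\alpha, k(w,\alpha)}$ is genuinely an alcove, and fix $\alpha,\beta,\alpha+\beta \in \Phi^+$. The point is that $A_w$ is a nonempty open simplex, so there is $x \in A_w$ with $k(w,\gamma) < (x,\gamma^\vee) < k(w,\gamma)+1$ simultaneously for $\gamma = \alpha,\beta,\alpha+\beta$. Writing $(\alpha+\beta)^\vee$ in terms of $\alpha^\vee$ and $\beta^\vee$ — concretely $\|\alpha+\beta\|^2 (\alpha+\beta)^\vee = \|\alpha\|^2\alpha^\vee + \|\beta\|^2\beta^\vee$, which follows from $\gamma^\vee = 2\gamma/(\gamma,\gamma)$ — and plugging $x$ into this linear identity, the three double inequalities combine to pinch $\|\alpha+\beta\|^2(x,(\alpha+\beta)^\vee)$ between $\|\alpha\|^2 k(w,\alpha) + \|\beta\|^2 k(w,\beta)$ and $\|\alpha\|^2(k(w,\alpha)+1) + \|\beta\|^2(k(w,\beta)+1)$; since also $k(w,\alpha+\beta) < (x,(\alpha+\beta)^\vee) < k(w,\alpha+\beta)+1$ and all the $k$'s are integers, inequality~\eqref{Shi ineq} drops out after clearing denominators and being careful with the integrality rounding (the $+1$ and $-1$ corrections come precisely from the fact that strict inequalities between reals become non-strict inequalities between the bounding integers).

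For the \emph{hard direction} — that any integer tuple $(k_\alpha)$ satisfying \eqref{Shi ineq} for all such triples actually defines an alcove — I would argue that the set $A = \bigcap_{\alpha\in\Phi^+} H^1_{\alpha,k_\alpha}$ is a nonempty convex region not meeting any hyperplane in $\mathcal{H}$, hence contained in a single alcove; but since $A$ is cut out by exactly one strip for each positive root (the same number of constraints that define $A_e$), a dimension/volume count forces $A$ to be an alcove. Nonemptiness is the crux: I would proceed by induction on height. The tuple is determined on simple roots freely, and the constraints \eqref{Shi ineq} with $\gamma = \alpha+\beta$ of height $h$ relate $k_\gamma$ to lower-height data; I would show that the system of linear inequalities $k_\alpha < (x,\alpha^\vee) < k_\alpha + 1$ over all $\alpha \in \Phi^+$ has a solution $x \in V$ by a Helly-type / LP-feasibility argument, where feasibility of each pairwise (rank-$\leq 2$) subsystem is exactly guaranteed by \eqref{Shi ineq}, and where the fact that $\Phi$ is a root system (so every positive root is a sum of two positive roots or is simple, and rank-2 subsystems are of type $A_1\times A_1$, $A_2$, $B_2$, or $G_2$) lets one bootstrap pairwise consistency to global consistency. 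Equivalently, and perhaps more cleanly, I would build the alcove explicitly: starting from $A_e$ and the known formula $\ell(w) = \sum_\alpha |k(w,\alpha)|$, I would show by induction on $\sum_\alpha |k_\alpha|$ that any admissible tuple is obtained from the zero tuple by a sequence of "wall crossings," each crossing changing exactly the coefficients $k_\alpha$ for $\alpha$ on one wall of the current alcove, and check that \eqref{Shi ineq} is exactly the condition preserved under such crossings.

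The main obstacle is the hard direction, specifically proving \emph{nonemptiness} of $A$ from the inequalities: one must rule out the possibility that the strips $\{H^1_{\alpha,k_\alpha}\}_{\alpha\in\Phi^+}$ are pairwise consistent in every rank-$2$ parabolic but have empty global intersection. This is where the geometry of the root system genuinely enters — a naive Helly argument in $\mathbb{R}^n$ needs $n+1$-wise intersections, so one must exploit that the normals $\alpha^\vee$ are not in general position but come from a root system, reducing global feasibility to the rank-$2$ (indeed, the three-root triples $\alpha,\beta,\alpha+\beta$) conditions. I expect the cleanest route is the inductive wall-crossing construction, reducing everything to the four rank-$2$ root systems where \eqref{Shi ineq} can be checked by hand; the $G_2$ case, with its ratio of squared lengths equal to $3$, will be the most delicate bookkeeping.
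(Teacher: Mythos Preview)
This theorem is not proved in the present paper: it is quoted verbatim from Shi's original article \cite{JYS1} (as Theorem~5.2 there) and used as a black box, so there is no ``paper's own proof'' to compare your proposal against. The same is true of the companion Theorem~\ref{thJYS3}. Everything the paper actually proves sits on top of these two cited results.

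That said, your sketch is a reasonable outline of how Shi's theorem is established. The easy direction is essentially complete as you wrote it: the identity $\|\alpha+\beta\|^{2}(\alpha+\beta)^{\vee}=\|\alpha\|^{2}\alpha^{\vee}+\|\beta\|^{2}\beta^{\vee}$ together with a point $x\in A_w$ and integrality gives \eqref{Shi ineq} directly. For the hard direction you correctly isolate nonemptiness of $\bigcap_{\alpha}H^{1}_{\alpha,k_{\alpha}}$ as the crux, and you are right that a naive Helly argument is insufficient. Of your two proposed routes, the inductive wall-crossing construction is the one that actually works and is close in spirit to Shi's argument in \cite{JYS1}: one shows that if $\sum_{\alpha}|k_{\alpha}|>0$ then some simple (or affine simple) reflection strictly decreases this sum while preserving the system \eqref{Shi ineq}, so by induction the tuple comes from an alcove. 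The Helly/LP-feasibility route, by contrast, is not obviously salvageable without essentially redoing the root-system combinatorics that the inductive argument already handles; your parenthetical ``lets one bootstrap pairwise consistency to global consistency'' is exactly the step that would need a real argument, and you have not supplied one.
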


\medskip

\begin{theorem}[\cite{JYS3}, Theorem 1.1] \label{thJYS3}
Let $A = \bigcap\limits_{\alpha \in \Phi^+} H^1_{\alpha,k_{\alpha}}$ with $k_{\alpha} \in \mathbb{Z}$. Then $A$ is an alcove, if and only if, for all $\alpha$, $\beta \in \Phi^+$ satisfying  $\gamma:= (\alpha^{\vee} + \beta^{\vee})^{\vee} \in \Phi^+$ the following inequality holds
\begin{equation}\label{Shi ineq simple}
k_{\alpha} + k_{\beta} \leq k_{\gamma} \leq k_{\alpha} + k_{\beta}  +1.
\end{equation}
\end{theorem}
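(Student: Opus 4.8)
The plan is to derive Theorem \ref{thJYS3} from Theorem \ref{thJYS1} by a direct case analysis on the two possible root lengths occurring among $\alpha,\beta,\gamma$, using the crystallographic constraints that pin down the relevant norms and the combinatorial fact that $\gamma = (\alpha^\vee+\beta^\vee)^\vee \in \Phi^+$ is the appropriate companion relation to $\alpha+\beta\in\Phi^+$. First I would record the elementary identity relating the two kinds of ``sum'': writing $\gamma^\vee = \alpha^\vee + \beta^\vee$, so $\gamma = \frac{2}{(\gamma,\gamma)}(\alpha^\vee+\beta^\vee) = \frac{(\alpha,\alpha)}{(\gamma,\gamma)}\alpha + \frac{(\beta,\beta)}{(\gamma,\gamma)}\beta$ up to the rescaling by norms, and observe that in an irreducible root system the pair $(\alpha,\beta)$ with $\alpha^\vee+\beta^\vee\in\Phi^\vee$ is, after possibly rescaling, exactly a pair with $\alpha+\beta\in\Phi$ in $\Phi$ or $\Phi^\vee$. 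Concretely I would use that $\|\alpha\|^2, \|\beta\|^2, \|\gamma\|^2 \in \{1, 2\}$ (for types $A,D,E,B,C,F$) or $\{1,3\}$ (for $G_2$), with our normalization that short roots have norm $1$.

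The core of the argument is then to substitute the known norm values into inequality \eqref{Shi ineq} and check that it collapses to \eqref{Shi ineq simple}. I would organize this by the ``shape'' of the triple: (i) all three of $\alpha,\beta,\gamma$ short, which forces $\|\alpha\|^2=\|\beta\|^2=\|\gamma\|^2$ and $\gamma = \alpha+\beta$, so \eqref{Shi ineq} becomes $k_\alpha+k_\beta+1 \le k_\gamma+1 \le k_\alpha+k_\beta + 2$, i.e.\ exactly \eqref{Shi ineq simple}; (ii) mixed-length cases, where I would note that $\gamma = (\alpha^\vee+\beta^\vee)^\vee$ has a specific length determined by those of $\alpha,\beta$ (e.g.\ in $B_n/C_n$ a short-plus-short can give a long coroot sum, etc.), plug the norms in and verify the algebraic simplification. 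The key simplification to watch: the product $\|\alpha+\beta\|^2(k_{\alpha+\beta}+1)$ in Shi's original inequality is the term that must be rewritten in terms of $\gamma$ rather than $\alpha+\beta$, using $\|\gamma\|^2 \gamma = 2(\alpha^\vee+\beta^\vee) = \|\alpha\|^2\alpha\cdot\frac{2}{\|\alpha\|^2}\cdots$ — really the cleanest route is to observe $\|\gamma\|^{-2} = \|\alpha\|^{-2} + \|\beta\|^{-2}$ is \emph{not} generally true, so instead I would use the concrete per-type norm tables rather than a norm-free manipulation.

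The main obstacle I anticipate is precisely that a uniform, type-free derivation does not quite work: the map $\alpha+\beta \mapsto (\alpha^\vee+\beta^\vee)^\vee$ behaves differently depending on which of the roots are long or short, so the ``if and only if'' has to be checked by running through the finitely many possibilities for $(\|\alpha\|^2,\|\beta\|^2,\|\gamma\|^2)$ that actually occur in an irreducible root system. There is also a bookkeeping subtlety: one must confirm that the condition ``$\alpha+\beta\in\Phi^+$ for some pair'' and ``$(\alpha^\vee+\beta^\vee)^\vee\in\Phi^+$ for some pair'' generate the \emph{same} family of constraints on the tuple $(k_\alpha)$, i.e.\ that no inequality is lost or spuriously added when passing between the two formulations; this follows from the standard fact that $\Phi^\vee$ is again an irreducible root system and that the pairs $(\alpha,\beta)$ with $\alpha+\beta\in\Phi$ are in bijection (via $\alpha\mapsto\alpha^\vee$) with pairs $(\alpha^\vee,\beta^\vee)$ with $\alpha^\vee+\beta^\vee\in\Phi^\vee$ only up to the length rescaling, so the verification has to be done with some care. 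Once the case analysis is complete, the equivalence of \eqref{Shi ineq} and \eqref{Shi ineq simple} is immediate, and Theorem \ref{thJYS3} follows from Theorem \ref{thJYS1}.
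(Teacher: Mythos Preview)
The paper does not prove Theorem~\ref{thJYS3}; it is quoted from \cite{JYS3} (Theorem~1.1 there) and used as a black box, exactly like Theorem~\ref{thJYS1} is quoted from \cite{JYS1}. So there is no proof in this paper to compare your proposal against.

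That said, your proposed derivation has a genuine gap. You plan to plug norm values into inequality~\eqref{Shi ineq} and simplify to~\eqref{Shi ineq simple}, but the two theorems are indexed by \emph{different} families of pairs: Theorem~\ref{thJYS1} ranges over $(\alpha,\beta)$ with $\alpha+\beta\in\Phi^+$, while Theorem~\ref{thJYS3} ranges over $(\alpha,\beta)$ with $\alpha^\vee+\beta^\vee\in(\Phi^\vee)^+$. In non--simply-laced types these sets genuinely differ. For instance in $B_2$ (with $\alpha_1$ long, $\alpha_2$ short), the pair $(\alpha_2,\alpha_1+\alpha_2)$ of short roots has $\alpha_2+(\alpha_1+\alpha_2)=\alpha_1+2\alpha_2\in\Phi^+$ but $\alpha_2^\vee+(\alpha_1+\alpha_2)^\vee = 2\alpha_1+4\alpha_2\notin\Phi^\vee$; conversely the pair $(\alpha_1,\alpha_1+2\alpha_2)$ of long roots has $\alpha_1^\vee+(\alpha_1+2\alpha_2)^\vee = 2(\alpha_1+\alpha_2)\in\Phi^\vee$ but $\alpha_1+(\alpha_1+2\alpha_2)\notin\Phi$. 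So for these pairs there is no inequality on one side to ``substitute into'' to get the inequality on the other side.

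You flag this as a ``bookkeeping subtlety'' and assert that the two families of pairs are in bijection via $\alpha\mapsto\alpha^\vee$ up to rescaling, but as the example shows that is false. What is actually needed is to prove that the two \emph{systems} of inequalities cut out the same set of integer tuples --- i.e.\ that each inequality present in one formulation but absent from the other is a consequence of the remaining ones. That is a real argument, not a per-pair norm substitution, and your case analysis on $(\|\alpha\|^2,\|\beta\|^2,\|\gamma\|^2)$ does not supply it. If you want to carry this through you would have to, for each ``missing'' pair, exhibit it as a consequence of two or more inequalities that \emph{are} present (or argue via the geometry of alcoves directly, which is essentially redoing Shi's proof).
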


\medskip

\begin{example} For $W_a = \widetilde{A}_2$, the positive root system of $A_2$ is  given by 3 roots, say ${\Phi^+ =\{\alpha, \beta, \alpha+\beta\}}$.  Shi's parameterization is shown in Figure \ref{dessin1}.

\begin{figure}[h!]
\centering
\includegraphics[scale=0.42]{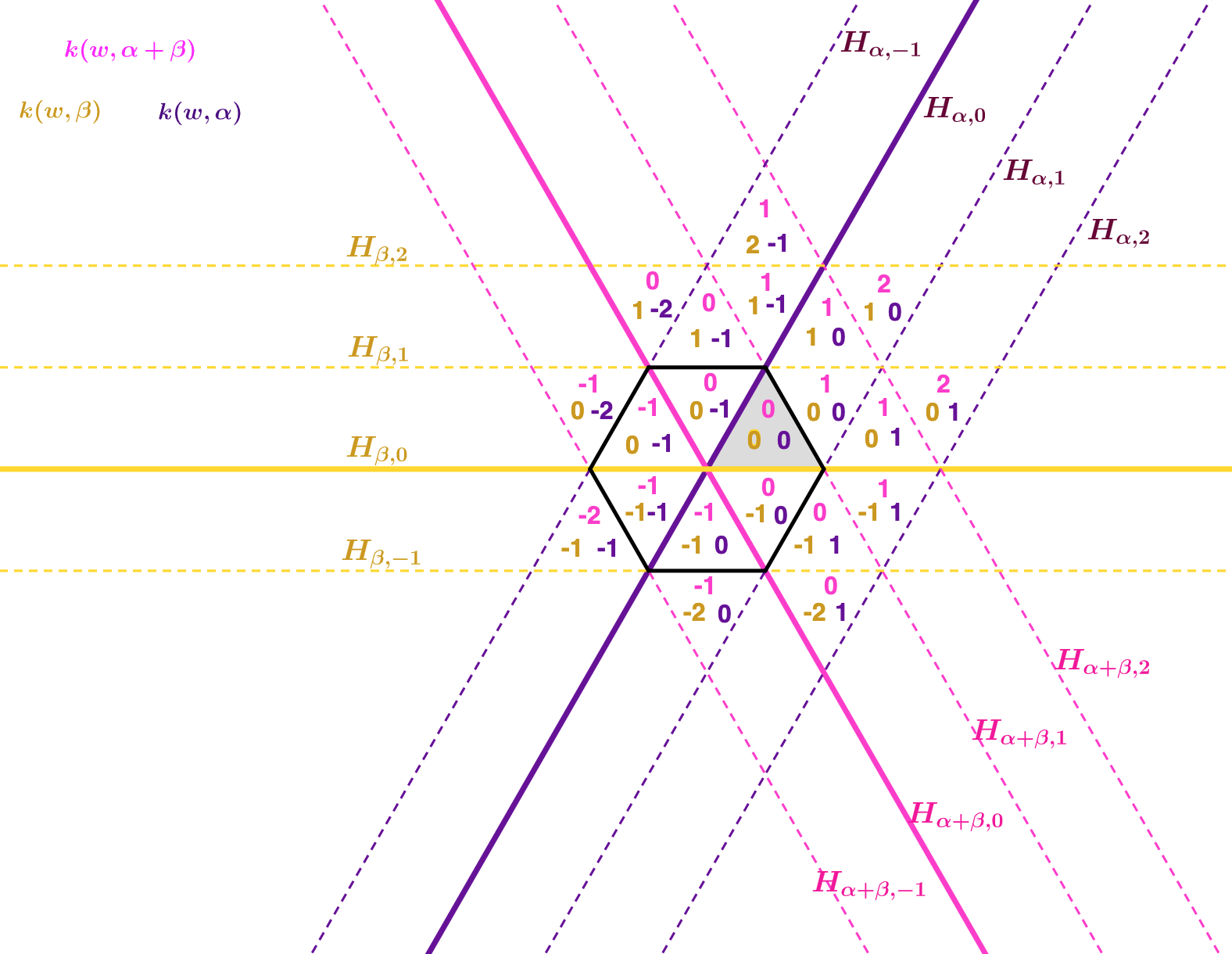} 
\caption{Shi parameterization of $W(\widetilde{A}_2)$.}
 \label{dessin1}
\end{figure}
\end{example}

\begin{remark}
1) Notice that it is a priori not easy for a $\Phi^+$-tuple of integers to satisfy the inequalities (\ref{Shi ineq simple}).

2) We also want to warn the reader that compared to the conventions of J.Y. Shi  in \cite{JYS1} and \cite{JYS2}, we swap left and right multiplication. The left multiplication is defined as follows: the alcove corresponding to $w'w$ is obtained by acting by the isometry $w'$ on the alcove $A_w$. The right multiplication is defined as follows: the alcove corresponding to $ws$ is obtained by crossing the wall of $A_w$ associated to the generator $s$. See for example Figure \ref{folding}.
\bigskip
\begin{figure}[h!]
\includegraphics[scale=0.43]{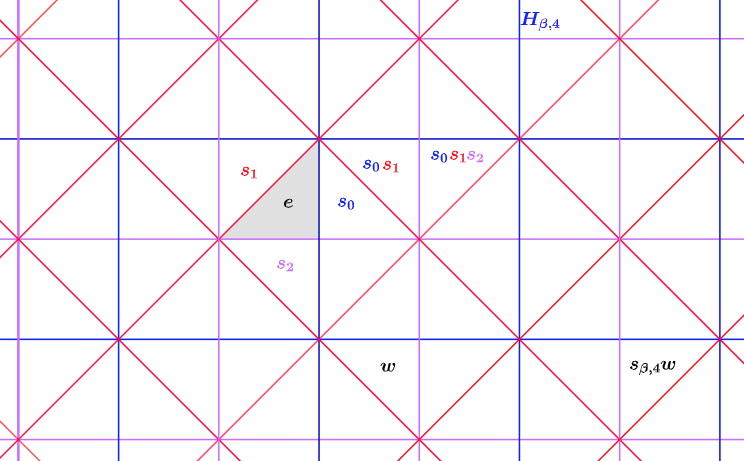} 
\caption{Folding of the generators of $W(\widetilde{B}_2)$.}
\label{folding}
\end{figure}
\end{remark}
\bigskip

\subsection{Fundamental parallelepiped $P_{\mathcal{H}}$}\label{P}

Let $\mathbb{Z}\Phi^{\vee}$ be the coroot lattice and let us write $\mathbb{Z}\Phi^{\vee} = \mathbb{Z}\alpha_1^{\vee} \oplus \cdots\oplus \mathbb{Z}\alpha_n^{\vee}$. We define its dual lattice $(\mathbb{Z}\Phi^{\vee})^{*}$ as 
$$
(\mathbb{Z}\Phi^{\vee})^{*} := \{ x \in V ~|~(x, y ) \in \mathbb{Z} ~\forall y \in \mathbb{Z}\Phi^{\vee} \}.
$$
The lattice $(\mathbb{Z}\Phi^{\vee})^{*}$ is called the \emph{weight lattice}. This lattice can be decomposed as ${(\mathbb{Z}\Phi^{\vee})^{*}= \mathbb{Z}\omega_1\oplus \cdots\oplus \mathbb{Z}\omega_n}$ where $\omega_i$ is such that $( \alpha_i^{\vee}, \omega_j ) = \delta_{ij}$. The elements $\omega_i$ are called the \emph{fundamental weights} (with respect to $\Delta$). Notice that it is also possible to define the weight lattice by 
$$
(\mathbb{Z}\Phi^{\vee})^* =  \{  x \in V ~|~( x, y ) \in \mathbb{Z} ~\forall y \in \Phi^{\vee} \}.
$$
Since $\Phi$ is crystallographic, $\Phi^{\vee}$ is also crystallographic and the inclusion $\mathbb{Z}\Phi^{\vee} \subset (\mathbb{Z}\Phi^{\vee})^{*}$ follows. Consequently, we are able to define the quotient group $\faktor{(\mathbb{Z}\Phi^{\vee})^{*}}{\mathbb{Z}\Phi^{\vee}}$. A good reference for these quotient groups is given in \cite{BOURB}. The \emph{index of connection} of $\Phi$ is by definition 
the cardinality of this quotient group. We denote it as 
$$
f_{\Phi} := \left | \faktor{(\mathbb{Z}\Phi^{\vee})^{*}}{\mathbb{Z}\Phi^{\vee}} \right |.
$$

We define $P_{\mathcal{H}} := \bigcap\limits_{\alpha \in \Delta}H_{\alpha,0}^1$ and $P_{\mathcal{H}^{\vee}} := \bigcap\limits_{\alpha \in \Delta}H_{\alpha^{\vee},0}^1$. The fundamental weights $\omega_i$ are some of the vertices  of $P_{\mathcal{H}}$ and we have $P_{\mathcal{H}} = \{\sum\limits_{i=1}^nc_i\omega_i~|~ c_i \in \llbracket 0,1 \rrbracket \}$. Since $(\omega_i, \omega_j ) \geq 0$ for all $i,j$, the element of maximal norm in $P_{\mathcal{H}}$  is the vertex $\rho :=\sum\limits_{i=1}^n\omega_i$. Moreover, if $z \in \text{cone}(\Delta)$ we have $(z,\omega_i) \geq 0$ for all fundamental weight $\omega_i$.

It is known (see \cite{BOURB} ch. VI, $\S$ 1, exercice 7) that the index of connection of $\Phi$ is the determinant of the Cartan matrix associated to $\Phi$. Moreover, the Cartan matrix associated to $\Phi^{\vee}$ is the transpose of the Cartan matrix associated to $\Phi$. It follows that the indexes of connection of $\Phi$ and $\Phi^{\vee}$ are the same. Finally, we define the two sets $$\text{Alc}(P_{\mathcal{H}}) := \{ w \in W_a~|~A_w \subset P_{\mathcal{H}} \}$$ and $$\text{Alc}(P_{\mathcal{H}^{\vee}}) := \{ w \in W_a^{\vee}~|~A_w \subset P_{\mathcal{H}^{\vee}} \}.$$ 

Finally, It is well known (see  \cite{RichardKane}, Section 11-6, Lemma C) that $$|\text{Alc}(P_{\mathcal{H}^{\vee}})|  = \frac{|W({\Delta})|}{f_{\Phi}}.$$

\begin{example} Let us take $W_a =W( \widetilde{B}_2)$ with simple system $\{\alpha_1, \alpha_2\}$. A short computation shows that $\omega_1 = \frac{1}{2}(2\alpha_1 + \alpha_2)$ and $\omega_2 = \alpha_1 + \alpha_2.$

 \vspace*{\stretch{1}}
\begin{figure}[h!]
\centering
\includegraphics[scale=0.5]{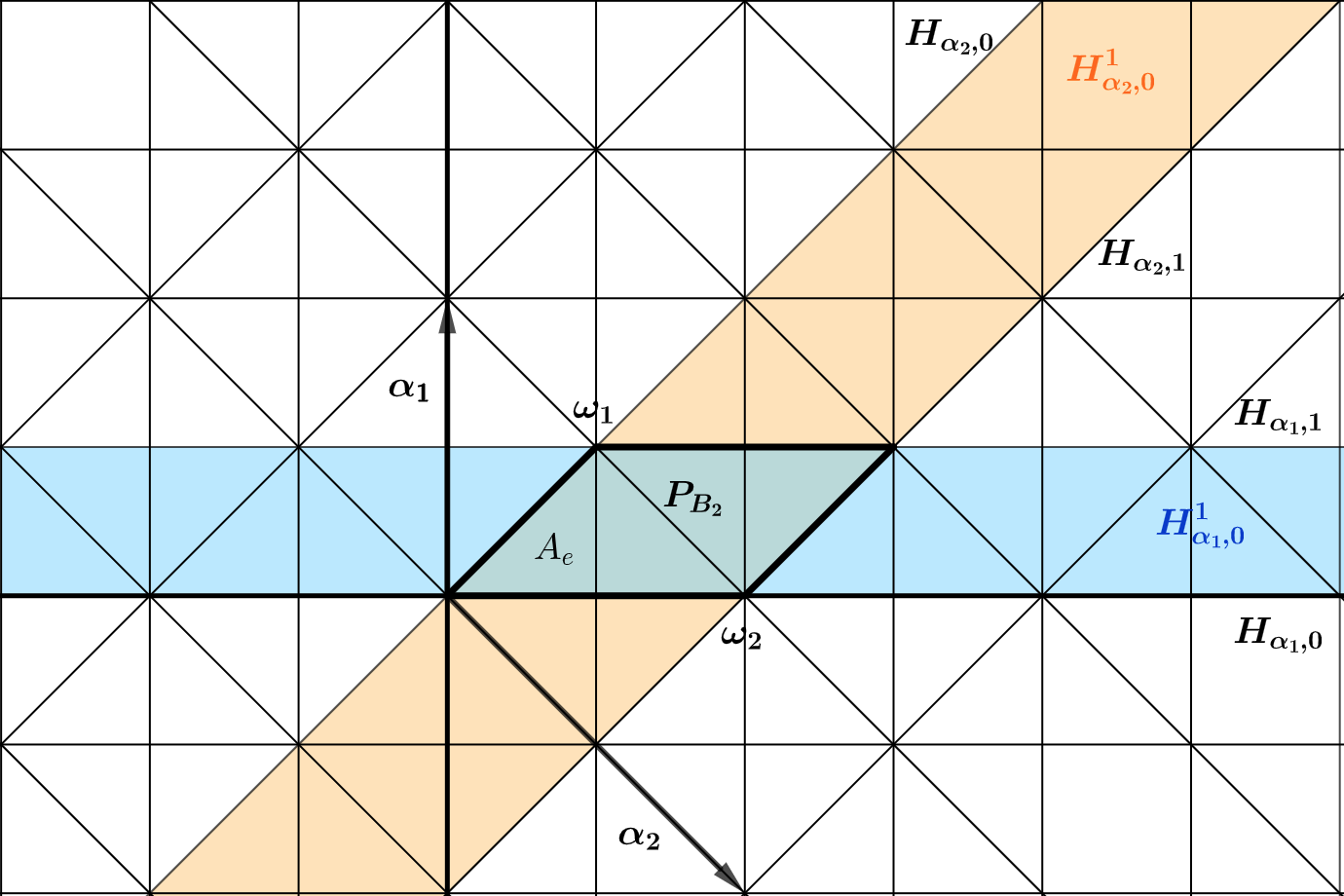} 
\caption{Fundamental parallelepiped $P_{B_2}$.}
\end{figure}
 \vspace*{\stretch{1}}

\end{example}

\textbf{~}\newpage

\section{$\Phi^+$-representation}\label{phi-rep}

The idea of this section is to see the elements of $W_a$ as affine isometries of a bigger space. Since any element $w \in W_a$ is characterized by a $\Phi^+$-tuple of integers $(k(w,\alpha))_{\alpha \in \Phi^+}$ and since $W_a$ acts geometrically on itself it is natural to want to see this action in $\mathbb{R}^{|\Phi^+|}$. For a Euclidean space $E$, we identify $E$ with the space of its translations, and we denote by Isom($E$) the space of its affine isometries, that is the elements of $E \rtimes GL(E)$ such that the Euclidean scalar product is preserved. In other words Isom($E$) $= E \rtimes O(E)$.

Let us write $\Phi^+=\{\beta_1, \beta_2, \dots,\beta_m\}$ with $m = |\Phi^+|$. We send $\Phi^+$ to the canonical basis of $\mathbb{R}^m$ via the map $\iota$. More specifically, since an element $w \in W_a$  is identified with the $\Phi^+$-tuple $(k(w,\gamma))_{\gamma \in \Phi^+}$ we set $\iota(w):=(k(w,\gamma))_{\gamma \in \Phi^+} $. The natural action of $s_{\alpha,k} \in $ Isom($\mathbb{R}^{^{|\Delta|}})$  corresponds to acting on $W_a$ by left multiplication, that is if $x \in A_w$ then $s_{\alpha,k}(x) \in A_{w'}$ where ${w' = s_{\alpha,k}w}$. We will denote the left multiplication by $w$ as $L_w$. The main goal of this section is to prove the following theorem:

\begin{theorem}\label{th Phi rep}
 There exists an injective morphism $F : W_a \rightarrow Isom(\mathbb{R}^m)$ such that for any $w \in W_a $ the following diagram commutes. This morphism is called the $\Phi^+$-representation of $W_a$.
 \begin{equation}\label{diag}
   \xymatrix{
 W_a \ar[r]^{L_w} \ar@{^{(}->}[d]_{\iota} &  W_a \ar@{^{(}->}[d]_{\iota} \\
    \mathbb{R}^m \ar[r]_{F(w)}                       & \mathbb{R}^m.
  }
 \end{equation}

\end{theorem}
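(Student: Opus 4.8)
The plan is to write down $F$ explicitly. I decompose each affine isometry into a translation and an orthogonal part and set $F(w):=t_{\iota(w)}\circ\bar F(w)$, where $t_v$ denotes translation by $v\in\mathbb{R}^m$ and $\bar F(w)\in O(\mathbb{R}^m)$ is the signed permutation of the canonical basis induced by the action of $w$ on $\Phi$. The whole argument then reduces to a single identity for the integers $k(w,\alpha)$ — a cocycle relation — together with a few formal verifications.

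\emph{Step 1: the cocycle formula.} The key point, essentially the content of Proposition \ref{proposition k(sw,beta)} (read with the convention $k(u,-\gamma)=-k(u,\gamma)$), is that for all $w,w'\in W_a$ and all $\gamma\in\Phi$,
\begin{equation*}
k(ww',\gamma)=k(w,\gamma)+k(w',w^{-1}\gamma).
\end{equation*}
I would prove this geometrically. Using $W_a=\mathbb{Z}\Phi\rtimes W$, write $w$ as a translation $\tau_w$ (by an element of $\mathbb{Z}\Phi$) composed with its linear part $\bar w\in W$. Since $A_{ww'}=w(A_{w'})$, every $y\in A_{ww'}$ is of the form $y=w(z)$ with $z\in A_{w'}$, so $(y,\gamma^{\vee})=(z,(\bar w^{-1}\gamma)^{\vee})+(\tau_w,\gamma^{\vee})$, and the last term is an integer because $(\tau_w,\gamma^{\vee})\in\mathbb{Z}$. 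Feeding in the inequalities defining $z\in\bigcap_{\alpha}H^1_{\alpha,k(w',\alpha)}$ and matching them against the inequalities defining $A_{ww'}$ yields the formula, once one observes — by specialising to $w'=e$, where all the $k(e,\cdot)$ vanish — that the unit‑interval shift occurring precisely when $\bar w^{-1}\gamma\in\Phi^-$ is exactly absorbed into $k(w,\gamma)$. One may equally prove the identity first for $w$ a Coxeter generator, via $s_{\beta,k}(x)=s_\beta(x)+k\beta$ and $s_\beta(\gamma^{\vee})=(s_\beta\gamma)^{\vee}$, and then induct on $\ell(w)$.

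\emph{Step 2: the linear part, and assembling $F$.} The group $W_a$ acts on $\Phi$ through $W_a\twoheadrightarrow W$. Identifying $\mathbb{R}^m=\bigoplus_{\gamma\in\Phi^+}\mathbb{R}e_\gamma$ and setting $e_{-\gamma}:=-e_\gamma$ for $\gamma\in\Phi^+$, the rule $\bar F(w)(e_\gamma):=e_{\bar w\gamma}$ is consistent with this convention, carries the canonical basis to a signed basis (so $\bar F(w)\in O(\mathbb{R}^m)$), and satisfies $\bar F(w_1w_2)=\bar F(w_1)\bar F(w_2)$; thus $\bar F:W_a\to O(\mathbb{R}^m)$ is a morphism, and a direct computation shows that the $\gamma$‑coordinate of $\bar F(w)(\iota(w'))$ equals $k(w',w^{-1}\gamma)$. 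Now put $F(w):=t_{\iota(w)}\circ\bar F(w)\in\mathbb{R}^m\rtimes O(\mathbb{R}^m)=\mathrm{Isom}(\mathbb{R}^m)$. Expanding, $F(w_1)\circ F(w_2)$ has linear part $\bar F(w_1)\bar F(w_2)=\bar F(w_1w_2)$ and translation part the vector whose $\gamma$‑coordinate is $k(w_1,\gamma)+k(w_2,w_1^{-1}\gamma)=k(w_1w_2,\gamma)$ by Step 1; hence $F(w_1)\circ F(w_2)=F(w_1w_2)$ and $F$ is a morphism. The square commutes because the $\gamma$‑coordinate of $F(w)(\iota(w'))$ is $k(w',w^{-1}\gamma)+k(w,\gamma)=k(ww',\gamma)$, i.e.\ $F(w)(\iota(w'))=\iota(ww')=\iota(L_w(w'))$. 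Finally $F$ is injective: the translation part of $F(w)$ is $\iota(w)$, and $\iota$ is injective since $w\mapsto A_w=\bigcap_{\alpha\in\Phi^+}H^1_{\alpha,k(w,\alpha)}$ is a bijection.

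\emph{Main obstacle.} Everything hinges on Step 1, and the genuinely delicate part there is the sign bookkeeping: checking that the extra $-1$ appears exactly in the coordinates $\gamma$ for which $w^{-1}\gamma$ is a negative root, and that it is absorbed so that the cocycle identity holds \emph{verbatim} for every $\gamma\in\Phi$, not merely for $\gamma\in\Phi^+$. This uniformity over all of $\Phi$ is what is needed both for the length induction (if one argues generator‑by‑generator) and for the coordinate computations of Step 2, where $w^{-1}\gamma$ is negative about half the time. Once the identity is established in this form, the remaining verifications are routine.
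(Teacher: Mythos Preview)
Your proof is correct but takes a genuinely different route from the paper's. The paper defines $F$ only on affine reflections $s_{\alpha,p}$ (via the matrix $L_\alpha$ and the vector $v_{p,\alpha}$), checks by hand that the square commutes for those, extends to all of $W_a$ by composing, and then has to prove well-definedness: it does so by observing that $\iota$ applied to a maximal chain of the right weak order of $W$ gives an affine basis of $\mathbb{R}^m$, so there is at most one affine map making the square commute. You instead establish the full cocycle identity $k(ww',\gamma)=k(w,\gamma)+k(w',\bar w^{-1}\gamma)$ up front (the paper only proves and uses the special case with $w$ a generator, in Proposition~\ref{descente indice k} and Proposition~\ref{proposition k(sw,beta)}), and then define $F(w)=t_{\iota(w)}\circ\bar F(w)$ globally. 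This buys you a cleaner and more conceptual argument: well-definedness never arises, the morphism property drops out of the cocycle, and the decomposition of $F(w)$ into linear part $\bar F(w)$ (depending only on $\bar w$) and translation part $\iota(w)$ is explicit from the start. The paper's route, on the other hand, isolates the nice fact about maximal chains giving affine bases and avoids having to check the cocycle for general $w$.
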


We recall first of all a few results of \cite{JYS1} that we will use later. The following lemmas show how the coefficients $k(w,\alpha)$ behave when we act on $w$ by the available operations in $W_a$. 
We also want to point out that Proposition 4.2 of \cite{JYS1} has a typo, which was noticed by M. Dyer and which is fixed in Proposition \ref{descente indice k}.

\begin{lemma}[\cite{JYS1}, Lemma 3.1]  \label{k groupe fini}
Let $w$ be an element of $W\subset W_a$ and $\alpha \in \Phi^+$. Then 
$$
k(w,\alpha) =  \left\{
                          	\begin{array}{rl}
 						0      & \text{if}~~ w^{-1}(\alpha) \in \Phi^{+} \\
  				      	-1      & \text{if}~~ w^{-1}(\alpha) \in \Phi^{-}.
					    \end{array}
					    \right.
$$
\end{lemma}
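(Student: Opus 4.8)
The statement to prove is Lemma \ref{k groupe fini}: for $w \in W \subset W_a$ and $\alpha \in \Phi^+$, we have $k(w,\alpha) = 0$ if $w^{-1}(\alpha) \in \Phi^+$, and $k(w,\alpha) = -1$ if $w^{-1}(\alpha) \in \Phi^-$.

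The plan is to exploit the equivariance of the alcove parameterization under the action of $W_a$, specialized to the finite part $W$. Recall $A_w = wA_e = \bigcap_{\alpha \in \Phi^+} H_{\alpha, k(w,\alpha)}^1$, where $A_e = \bigcap_{\alpha \in \Phi^+} H_{\alpha,0}^1$, i.e. $A_e = \{x \in V \mid 0 < (x,\alpha^\vee) < 1 \ \forall \alpha \in \Phi^+\}$. First I would pick a point $x \in A_e$, so that $wx \in A_w$, and compute $(wx, \alpha^\vee)$ for $\alpha \in \Phi^+$: by $W$-invariance of the inner product and the fact that $w$ permutes the set of coroots (with $(w\beta)^\vee = w(\beta^\vee)$), we get $(wx,\alpha^\vee) = (x, w^{-1}(\alpha^\vee)) = (x, (w^{-1}\alpha)^\vee)$. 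Now $w^{-1}(\alpha)$ is a root, either positive or negative.

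The key case distinction: if $w^{-1}(\alpha) = \beta \in \Phi^+$, then $(x, \beta^\vee) \in (0,1)$ since $x \in A_e$, so $0 < (wx, \alpha^\vee) < 1$, which forces $wx \in H_{\alpha,0}^1$, hence $k(w,\alpha) = 0$. If instead $w^{-1}(\alpha) = -\beta$ with $\beta \in \Phi^+$, then $(wx,\alpha^\vee) = (x, -\beta^\vee) = -(x,\beta^\vee) \in (-1,0)$, so $wx \in H_{\alpha,-1}^1$, giving $k(w,\alpha) = -1$. Since the integer $k(w,\alpha)$ is uniquely determined by the condition that $A_w \subseteq H_{\alpha,k(w,\alpha)}^1$ (the strips $H_{\alpha,k}^1$ for $k \in \mathbb{Z}$ partition the complement of the hyperplanes $H_{\alpha,k}$, up to the hyperplanes themselves, and $A_w$ being an alcove lies strictly in one of them), this determines $k(w,\alpha)$ completely, and the computation at the single point $x$ suffices.

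I do not anticipate a serious obstacle here; the only points requiring minor care are: (i) justifying that $w$ acting as a linear isometry of $V$ sends $\beta^\vee$ to $(w\beta)^\vee$ — this follows from $w\beta^\vee = \frac{2w\beta}{(\beta,\beta)} = \frac{2w\beta}{(w\beta,w\beta)} = (w\beta)^\vee$ using $(w\beta,w\beta) = (\beta,\beta)$; and (ii) confirming that for $w \in W$ (not a genuinely affine element) the translation part is trivial, so $w$ acts linearly and the argument above is valid as stated. Both are standard facts recalled in Section \ref{section aff}. Alternatively, one could argue purely combinatorially using $\ell(w) = \sum_{\alpha \in \Phi^+}|k(w,\alpha)|$ together with the classical fact that $\ell(w) = |\{\alpha \in \Phi^+ \mid w^{-1}(\alpha) \in \Phi^-\}|$ for $w \in W$: this immediately shows each $k(w,\alpha) \in \{-1,0\}$ with exactly the inversions contributing $-1$, but one still needs the geometric computation to match the sign to the correct root, so the direct approach above is cleaner.
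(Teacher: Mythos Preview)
Your argument is correct: picking $x \in A_e$ and computing $(wx,\alpha^\vee) = (x,(w^{-1}\alpha)^\vee)$ via the $W$-invariance of the inner product and the identity $w(\beta^\vee) = (w\beta)^\vee$ immediately pins down the strip containing $wx$, hence the integer $k(w,\alpha)$. The two minor points you flag (equivariance of $\vee$ under $W$, and triviality of the translation part for $w \in W$) are indeed the only things to check, and both are routine.

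As for comparison with the paper: the paper does not actually prove this lemma. It is stated with a citation to \cite{JYS1}, Lemma~3.1, as one of the preliminary results recalled from Shi's work, and no argument is given here. Your direct geometric proof is essentially the standard one (and presumably the one Shi gives), so there is nothing to contrast.
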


\begin{lemma}[\cite{JYS1}, Lemma 3.2]\label{proposition translation}
Let $w$ be an element of $W$ and $x \in \mathbb{Z}\Phi$. Then for all $\alpha \in \Phi$ one has the following formula  $$k(\tau_xw, \alpha) = k(w,\alpha) + ( x, \alpha^{\vee} ).$$ 
\end{lemma}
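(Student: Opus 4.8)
The plan is to translate the geometric correspondence $w \mapsto A_w = wA_e$ directly into the coefficients, using the action of the lattice $\mathbb{Z}\Phi$ on strips recalled in Section \ref{shi para} together with the uniqueness of the $\Phi^+$-tuple attached to an alcove. First I would observe that, since $\tau_x \in \mathbb{Z}\Phi \subset W_a$, the element $\tau_x w$ again lies in $W_a$, and by the defining correspondence its alcove is
$$
A_{\tau_x w} = (\tau_x w) A_e = \tau_x(w A_e) = \tau_x A_w.
$$

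Next, starting from the strip decomposition $A_w = \bigcap_{\alpha \in \Phi^+} H^1_{\alpha, k(w,\alpha)}$ and using that the translation $\tau_x$ is a bijection of $V$, so that it commutes with intersections, I would write
$$
A_{\tau_x w} = \tau_x A_w = \bigcap_{\alpha \in \Phi^+} \tau_x H^1_{\alpha, k(w,\alpha)}.
$$
Applying the action formula $\tau_x H^m_{\alpha, k} = H^m_{\alpha, k+(\alpha^{\vee}, x)}$ with $m=1$ then yields
$$
A_{\tau_x w} = \bigcap_{\alpha \in \Phi^+} H^1_{\alpha,\, k(w,\alpha)+(\alpha^{\vee}, x)}.
$$
Comparing this with the defining decomposition $A_{\tau_x w} = \bigcap_{\alpha \in \Phi^+} H^1_{\alpha, k(\tau_x w, \alpha)}$ and invoking the fact that an alcove determines its $\Phi^+$-tuple of integers uniquely, I would conclude that $k(\tau_x w, \alpha) = k(w, \alpha) + (\alpha^{\vee}, x) = k(w,\alpha) + (x, \alpha^{\vee})$ for every $\alpha \in \Phi^+$.

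Finally, to reach the statement for all of $\Phi$, I would treat $\alpha \in \Phi^-$ by writing $\alpha = -\gamma$ with $\gamma \in \Phi^+$, and combine the convention $k(\cdot, -\gamma) = -k(\cdot, \gamma)$ with the identity $(-\gamma)^{\vee} = -\gamma^{\vee}$ to check that both sides of the claimed formula merely change sign, so the positive-root case propagates. There is no serious obstacle here: the argument is a direct transcription of the geometry into the coefficients. The only points that require care are the bookkeeping of the sign conventions on negative roots in this last step, and making explicit the uniqueness of the integer $\Phi^+$-tuple indexing a given alcove, which is exactly what legitimizes the term-by-term comparison of the two decompositions of $A_{\tau_x w}$.
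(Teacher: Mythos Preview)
Your argument is correct and is the natural one: translate the alcove, use the action of $\mathbb{Z}\Phi$ on strips recorded in Section~\ref{shi para}, and read off the coefficients by uniqueness of the $\Phi^+$-tuple. Note, however, that the paper does not supply its own proof of this lemma; it is quoted from \cite{JYS1} (Lemma~3.2) without argument, so there is nothing to compare against. Your proof in fact establishes slightly more than the stated hypothesis requires, since nowhere do you use that $w$ lies in the finite group $W$ rather than in $W_a$; this is harmless and indeed matches how the formula is applied later in the paper (e.g.\ in Proposition~\ref{proposition k(sw,beta)}).
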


\begin{corollary}[\cite{JYS1}, Formula (3.3.1)] \label{corollaire formule 3.3.1}
Let $\alpha \in \Phi$. As defined in Section \ref{section aff} we recall that $\alpha_0$ is the negative of the highest short root of $\Phi$ and $s_0:=s_{-\alpha_0,1}$. Then for all $0 \leq i \leq n$ we have  
$$
k(s_i,\alpha) =  \left\{
                          	\begin{array}{rl}
 						1     & \text{if}~~ \alpha = -\alpha_i \\
 						0    & \text{if}~~ \alpha \neq \pm \alpha_i \\
  				      	-1   & \text{if}~~  \alpha = \alpha_i. \\
					    \end{array}
					    \right.
$$
\end{corollary}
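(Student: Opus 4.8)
\textbf{Proof proposal for Corollary \ref{corollaire formule 3.3.1}.}

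The plan is to derive the formula for $k(s_i, \alpha)$ directly from the two preceding lemmas, splitting into the two cases $1 \leq i \leq n$ (the finite generators) and $i = 0$ (the affine generator $s_0 = s_{-\alpha_0,1}$).

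First I would handle the case $1 \leq i \leq n$, where $s_i = s_{\alpha_i} \in W$. Here Lemma \ref{k groupe fini} applies directly: $k(s_i, \alpha) = 0$ if $s_i^{-1}(\alpha) = s_i(\alpha) \in \Phi^+$ and $k(s_i, \alpha) = -1$ if $s_i(\alpha) \in \Phi^-$. The key elementary fact about simple reflections is that $s_{\alpha_i}$ permutes $\Phi^+ \setminus \{\alpha_i\}$ and sends $\alpha_i$ to $-\alpha_i$. Therefore, for $\alpha \in \Phi^+$, one has $s_i(\alpha) \in \Phi^-$ if and only if $\alpha = \alpha_i$, giving $k(s_i, \alpha_i) = -1$ and $k(s_i, \alpha) = 0$ for $\alpha \in \Phi^+ \setminus \{\alpha_i\}$. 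To get the full statement on all of $\Phi$, I then invoke the convention $k(w, -\alpha) = -k(w, \alpha)$: this immediately yields $k(s_i, -\alpha_i) = 1$ and $k(s_i, \alpha) = 0$ for $\alpha = -\beta$ with $\beta \in \Phi^+ \setminus \{\alpha_i\}$, which is exactly the claimed trichotomy.

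Next I would treat $i = 0$. Since $s_0 = s_{-\alpha_0, 1}$ is an affine reflection, Lemma \ref{k groupe fini} does not apply, so I would instead write $s_0$ explicitly as a translate of a finite reflection. Using the definition of $s_{\alpha, k}$, one checks that $s_{-\alpha_0, 1} = \tau_{-\alpha_0^{\vee}}\, s_{-\alpha_0} = \tau_{-\alpha_0^{\vee}}\, s_{\alpha_0}$ (here $\alpha_0^{\vee} = 2\alpha_0/(\alpha_0,\alpha_0)$, and since $\alpha_0$ is a short root normalized to $\|\alpha_0\| = 1$ we have $\alpha_0^{\vee} = 2\alpha_0$; I would verify the translation vector by evaluating both isometries on a point, tracking the constant term). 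Then Lemma \ref{proposition translation} gives
\begin{equation*}
k(s_0, \alpha) = k(\tau_{-\alpha_0^{\vee}} s_{\alpha_0}, \alpha) = k(s_{\alpha_0}, \alpha) + (-\alpha_0^{\vee}, \alpha^{\vee}).
\end{equation*}
The first summand is computed by Lemma \ref{k groupe fini} exactly as before (with $-\alpha_0$ playing the role of a root whose reflection negates it), and the second is an explicit integer pairing. I would then evaluate this sum in the three cases $\alpha = -\alpha_0$, $\alpha = \alpha_0$, and $\alpha \neq \pm\alpha_0$, checking that the contributions combine to give $1$, $-1$, and $0$ respectively.

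The main obstacle I anticipate is pinning down the affine part in the $i=0$ case: getting the translation vector in the decomposition $s_0 = \tau_? \, s_{\alpha_0}$ correct, and then confirming that the pairing term $(-\alpha_0^{\vee}, \alpha^{\vee})$ together with the reflection term yields precisely the stated values rather than being off by a sign or a unit. Because $-\alpha_0$ is the highest short root, $(\alpha_0^{\vee}, \alpha^{\vee})$ can be nonzero for roots $\alpha$ other than $\pm\alpha_0$, so I would need the normalization $\|\alpha_0\| = 1$ and the crystallographic integrality of the pairings to verify that these extra contributions cancel against $k(s_{\alpha_0}, \alpha)$ and leave $0$ for $\alpha \neq \pm\alpha_0$. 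This bookkeeping, rather than any conceptual difficulty, is where care is required.
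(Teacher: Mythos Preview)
The paper does not supply its own proof of this corollary; it is simply quoted from \cite{JYS1}. So there is nothing to compare against, and I assess your argument on its own.

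Your treatment of the case $1\le i\le n$ via Lemma~\ref{k groupe fini} and the standard fact that a simple reflection permutes $\Phi^+\setminus\{\alpha_i\}$ is correct.

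For $i=0$ the outline is right but the translation vector you wrote down is wrong. From the definition $s_{\alpha,k}(x)=s_\alpha(x)+k\alpha$ one gets
\[
s_0=s_{-\alpha_0,1}=\tau_{-\alpha_0}\,s_{\alpha_0},
\]
so the translation is by $-\alpha_0$, not by $-\alpha_0^{\vee}=-2\alpha_0$. With your vector the check at $\alpha=-\alpha_0$ would give $-1+(-\alpha_0^{\vee},(-\alpha_0)^{\vee})=-1+4=3$ instead of $1$. You did flag this step as something to verify, and the verification corrects it; with the right vector, Lemma~\ref{proposition translation} yields $k(s_0,\alpha)=k(s_{\alpha_0},\alpha)+(-\alpha_0,\alpha^{\vee})$, and $\alpha=-\alpha_0$ gives $-1+2=1$ as desired.

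The cancellation you worry about for $\alpha\in\Phi^+\setminus\{-\alpha_0\}$ is real work if done directly: one first uses that $-\alpha_0$ is dominant and short (so Cauchy--Schwarz forces $(-\alpha_0,\alpha^{\vee})\in\{0,1\}$), and then must argue that the value $1$ occurs exactly when $s_{\alpha_0}(\alpha)\in\Phi^-$. A cleaner way to finish, using only material already recorded in Section~\ref{shi para}, is the length formula $\ell(w)=\sum_{\beta\in\Phi^+}|k(w,\beta)|$: since $\ell(s_0)=1$ and you have just computed $|k(s_0,-\alpha_0)|=1$, every other $k(s_0,\beta)$ is forced to be $0$.
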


\begin{proposition}[\cite{JYS1}, Proposition 4.2]\label{descente indice k}
Let $w \in W_a$ and $s \in S_a$. Then  for all $\alpha \in \Phi^+$ one has the following formula   $$k(sw, \alpha) = k(w, \overline{s}(\alpha)) + k(s, \alpha).$$
\end{proposition}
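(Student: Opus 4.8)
The plan is to reduce the statement to the two generating cases already at our disposal, namely the case where $s$ is one of the finite reflections $s_i$, $i = 1, \dots, n$, and the case $s = s_0 = s_{-\alpha_0,1}$, and to handle both via the geometric definition $A_{sw} = s\,A_w$ together with the strip description $A_w = \bigcap_{\alpha \in \Phi^+} H^1_{\alpha, k(w,\alpha)}$. First I would fix $w \in W_a$ and $s \in S_a$, and write out what the isometry $s$ does to the hyperplane $H_{\alpha,k}$: since $s_{\alpha',k'}$ maps $H_{\beta,\ell}$ to $H_{\overline{s}(\beta), \ell'}$ for an explicit $\ell'$ depending on $k'$ and the pairing $(\alpha'^{\vee}, \beta)$-type data, applying $s$ to all the defining strips of $A_w$ produces $A_{sw}$ as an intersection of strips indexed (after using $k(\cdot,-\gamma) = -k(\cdot,\gamma)$ to keep everything in $\Phi^+$) by $\overline{s}(\Phi^+)$, and reading off the $\alpha$-th coefficient gives $k(sw,\alpha)$ in terms of $k(w, \overline{s}(\alpha))$ plus a correction term. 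The claim is precisely that this correction term equals $k(s,\alpha)$.

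The key computational step is to identify that correction term. For $s = s_i$ a finite reflection, $s_i$ fixes every hyperplane through the origin setwise and sends $H_{\alpha,0}$ to $H_{\overline{s_i}(\alpha),0}$; the translation part is trivial, so the correction is $0$ whenever $\alpha \neq \pm\alpha_i$, and one checks directly the behaviour for $\alpha = \alpha_i$ — this matches $k(s_i,\alpha)$ as recorded in Corollary \ref{corollaire formule 3.3.1}. For $s_0 = s_{-\alpha_0,1} = \tau_{-\alpha_0}\, s_{\alpha_0}$ one uses the semidirect product decomposition $W_a \simeq \mathbb{Z}\Phi \rtimes W$: write $s_0 w = \tau_{-\alpha_0}(s_{\alpha_0} w)$, apply Lemma \ref{proposition translation} to peel off the translation (this contributes $(-\alpha_0, \alpha^{\vee}) = -(\alpha_0,\alpha^{\vee})$), apply the finite-reflection case to $s_{\alpha_0}$, and verify that the total correction $k(s_{\alpha_0},\alpha) - (\alpha_0,\alpha^{\vee})$ coincides with $k(s_0,\alpha)$ from Corollary \ref{corollaire formule 3.3.1}. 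Alternatively — and this is probably cleaner — one proves the formula first for $s \in S_a$ directly as above, and then one does \emph{not} need to iterate: the statement is only claimed for $s \in S_a$, so a single application of the hyperplane-image computation suffices once the generating cases are verified. Finally I would check the cocycle-type consistency: since the correction term in $k(sw,\alpha) = k(w,\overline{s}(\alpha)) + c(s,\alpha)$ is manifestly independent of $w$ (it comes only from how $s$ moves hyperplanes), evaluating at $w = e$, where $k(e,\gamma) = 0$ for all $\gamma \in \Phi^+$, forces $c(s,\alpha) = k(s,\alpha)$, which closes the argument without any case analysis beyond knowing $k(e,\cdot) = 0$.

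The main obstacle I anticipate is bookkeeping the sign conventions when $\overline{s}(\alpha)$ lands in $\Phi^-$: the strip $H^1_{\alpha, k(w,\alpha)}$ has a built-in asymmetry (it is $\{k < (x,\alpha^\vee) < k+1\}$, not symmetric under $\alpha \mapsto -\alpha$), so when $s$ sends the index $\alpha$ to a negative root one must rewrite $H^1_{-\gamma, k} = H^1_{\gamma, -k-1}$ — the "$-k-1$" rather than "$-k$" is exactly the source of the $-1$'s appearing in Corollary \ref{corollaire formule 3.3.1}, and it is also, presumably, where the typo in the original Proposition 4.2 crept in. I would therefore be careful to state the strip-reindexing identity explicitly and use it uniformly, and I would double-check the final formula on a small example such as $W(\widetilde{A}_2)$ using Figure \ref{dessin1} before declaring victory.
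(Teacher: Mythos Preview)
The paper does not give its own proof of this proposition --- it is quoted from \cite{JYS1} with a corrected typo and no argument is supplied --- so there is nothing in the paper to compare your proposal against. That said, your plan is correct and would produce a complete proof.

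Your second, ``cleaner'' route is the one to take: once the strip computation shows that $s$ sends $H^1_{\beta,k}$ to $H^1_{\overline{s}(\beta),\,k+c}$ with $c$ depending only on $s$ and $\beta$, the reindexing (using $H^1_{-\gamma,k}=H^1_{\gamma,-k-1}$ when $\overline{s}(\alpha)\in\Phi^-$, exactly as you anticipate) yields $k(sw,\alpha)=k(w,\overline{s}(\alpha))+c(s,\alpha)$ with $c$ independent of $w$; evaluating at $w=e$, where $k(e,\gamma)=0$ for all $\gamma\in\Phi$, gives $c(s,\alpha)=k(s,\alpha)$ immediately. This makes the separate treatment of $s_0$ via $s_0=\tau_{-\alpha_0}s_{\alpha_0}$ and Lemma~\ref{proposition translation} unnecessary, though that route also works.

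One minor slip of wording: $s_i$ does not ``fix every hyperplane through the origin setwise'' --- it permutes them, sending $H_{\alpha,0}$ to $H_{s_i(\alpha),0}$. You clearly know this from the next clause, but the sentence as written is false. Also, in your decomposition of $s_0$ you invoke the ``finite-reflection case'' for $s_{\alpha_0}$, which is not a simple reflection; what you actually need there is the strip computation for an arbitrary linear reflection $s_\alpha\in W$ (or Lemma~\ref{k groupe fini}), not the specific $s_i$ case --- again harmless, but worth stating precisely.
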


\bigskip

\begin{proposition}\label{proposition k(sw,beta)}
Let $w \in W_a$, $\alpha, \beta \in \Phi^+ $ and $p \in \mathbb{Z}$. Then we have
\begin{itemize}
\item[1)]
 $
k(s_{\alpha}w,\beta) = 
\left\{
\begin{array}{rl}
   k(w,s_{\alpha}(\beta)) & \text{if}~~ s_{\alpha}(\beta) \in \Phi^{+} \\
   k(w,s_{\alpha}(\beta))-1 & \text{if}~~ s_{\alpha}(\beta) \in \Phi^{-} 
\end{array}
\right.
$
\item[2)]  $
k(s_{\alpha,p}w,\beta) = 
\left\{
\begin{array}{rl}
   k(w,s_{\alpha}(\beta)) - p( \alpha,  s_{\alpha}(\beta)^{\vee} )  & \text{if}~~ s_{\alpha}(\beta) \in \Phi^{+} \\
   k(w,s_{\alpha}(\beta))-1 -p( \alpha,  s_{\alpha}(\beta)^{\vee} ) & \text{if}~~ s_{\alpha}(\beta) \in \Phi^{-} 
\end{array}
\right.
$
\end{itemize}
\end{proposition}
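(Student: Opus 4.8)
The plan is to deduce Proposition \ref{proposition k(sw,beta)} from Proposition \ref{descente indice k} (the corrected Shi formula $k(sw,\alpha) = k(w,\overline{s}(\alpha)) + k(s,\alpha)$) together with the elementary formulas of Lemma \ref{k groupe fini}, Corollary \ref{corollaire formule 3.3.1}, and the translation formula of Lemma \ref{proposition translation}. The subtlety is that Proposition \ref{descente indice k} is stated for a \emph{Coxeter generator} $s \in S_a$, whereas here $s_\alpha$ (and $s_{\alpha,p}$) is an arbitrary (affine) reflection, so I first need to extend the formula to arbitrary reflections, and I have to keep track of positive/negative roots carefully since the coefficients $k(w,\alpha)$ are only indexed by $\Phi^+$ with the convention $k(w,-\alpha) = -k(w,\alpha)$.

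For part 1), the clean route is to observe that $s_\alpha \in W$, so $L_{s_\alpha}$ acts on $A_w$ by the linear isometry $s_\alpha$, and to use the cocycle-type identity that already underlies Proposition \ref{descente indice k}: for $u \in W_a$ and $v \in W$ one has $k(vu,\beta) = k(u, v^{-1}(\beta)) + k(v,\beta)$, valid for all $\beta \in \Phi$ under the convention $k(\cdot,-\gamma)=-k(\cdot,\gamma)$. Granting this, take $v = s_\alpha = s_\alpha^{-1}$: then $k(s_\alpha w,\beta) = k(w, s_\alpha(\beta)) + k(s_\alpha,\beta)$, and by Lemma \ref{k groupe fini} applied to $s_\alpha \in W$ we have $k(s_\alpha,\beta) = 0$ if $s_\alpha^{-1}(\beta) = s_\alpha(\beta) \in \Phi^+$ and $k(s_\alpha,\beta) = -1$ if $s_\alpha(\beta) \in \Phi^-$. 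That is exactly the claimed dichotomy. To justify the cocycle identity itself when $v$ is not a simple generator, I would either (a) write $v$ as a reduced word $v = s_{i_1}\cdots s_{i_r}$ in $S \subset S_a$, iterate Proposition \ref{descente indice k}, and check that the accumulated correction terms $\sum_j k(s_{i_j}, \overline{s_{i_{j+1}}}\cdots \overline{s_{i_r}}(\beta))$ telescope to $k(v,\beta)$ using Lemma \ref{k groupe fini} and the fact that $\ell(v) = \#\{\gamma\in\Phi^+ : v^{-1}(\gamma)\in\Phi^-\}$; or (b) give a direct geometric argument: $A_{s_\alpha w} = s_\alpha A_w = \bigcap_{\gamma} s_\alpha(H^1_{\gamma,k(w,\gamma)})$, and since $s_\alpha$ maps the hyperplane $H_{\gamma,k}$ to $H_{s_\alpha(\gamma),k}$ (because $(\,s_\alpha(x), s_\alpha(\gamma)^\vee) = (x,\gamma^\vee)$), one reads off $k(s_\alpha w, \beta)$ by setting $\gamma = s_\alpha(\beta)$ and normalizing the strip $H^1_{s_\alpha(\beta), k(w,s_\alpha(\beta))}$ back to the form $H^1_{\beta, \cdot}$; when $s_\alpha(\beta)\in\Phi^-$ the normalization $H^1_{-\delta,k} = H^1_{\delta,-k-1}$ produces the extra $-1$. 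I expect approach (b) to be the shortest and most transparent.

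For part 2), I would write $s_{\alpha,p} = \tau_{p\alpha^\vee}\, s_\alpha$ — indeed $s_{\alpha,p}(x) = s_\alpha(x) + p\alpha^\vee$ follows directly from the definition $s_{\alpha,k}(x) = x - (2(\alpha,x)/(\alpha,\alpha) - k)\alpha$ after rewriting $\alpha = \tfrac{1}{2}(\alpha,\alpha)\alpha^\vee$; more precisely one checks $s_{\alpha,p} = \tau_{p\alpha}\circ s_\alpha$ in the normalization where $\|\alpha\|=1$, and in general the translation part is $p\alpha^\vee$... here I must match conventions with Lemma \ref{proposition translation}, which is stated as $k(\tau_x w,\alpha) = k(w,\alpha) + (x,\alpha^\vee)$ for translations by $x \in \mathbb{Z}\Phi$. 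Then $k(s_{\alpha,p}w,\beta) = k(\tau_{p\alpha}(s_\alpha w),\beta) = k(s_\alpha w,\beta) + p(\alpha,\beta^\vee)$ by Lemma \ref{proposition translation} (applied with $x = p\alpha$), and now I substitute part 1). The only thing to verify is the bookkeeping of the inner product: when $s_\alpha(\beta)\in\Phi^+$, part 1) gives $k(s_\alpha w,\beta) = k(w,s_\alpha(\beta))$, so I get $k(w,s_\alpha(\beta)) + p(\alpha,\beta^\vee)$, and I must reconcile $(\alpha,\beta^\vee)$ with the stated $-p(\alpha, s_\alpha(\beta)^\vee)$ — but $(\alpha, s_\alpha(\beta)^\vee) = (\alpha, s_\alpha(\beta^\vee)) = (s_\alpha(\alpha),\beta^\vee) = (-\alpha,\beta^\vee) = -(\alpha,\beta^\vee)$ since $s_\alpha$ is an isometry commuting with $(-)^\vee$ and $s_\alpha(\alpha) = -\alpha$; so $+p(\alpha,\beta^\vee) = -p(\alpha,s_\alpha(\beta)^\vee)$, matching the statement. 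The negative-root case is identical with an extra $-1$ carried along from part 1).

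The main obstacle, as I see it, is purely the extension of Proposition \ref{descente indice k} from simple generators to arbitrary reflections $s_\alpha \in W$ — i.e.\ establishing the cocycle identity $k(vu,\beta) = k(u,v^{-1}(\beta)) + k(v,\beta)$ for $v\in W$, $u\in W_a$, $\beta\in\Phi$ — since everything else is a substitution and an inner-product identity. I would handle that obstacle with the geometric normalization-of-strips argument (b) above, which avoids induction on reduced words entirely and makes the appearance of the $-1$ correction terms completely explicit via the identity $H^1_{-\delta,k} = H^1_{\delta,-k-1}$.
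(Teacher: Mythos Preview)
Your proposal is correct and close to the paper's argument, with a couple of differences worth flagging.

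For part 1), the paper follows your route (a): it fixes a reduced word $s_\alpha = s_1\cdots s_r$ in $S$ and iterates Proposition~\ref{descente indice k} to obtain
\[
k(s_\alpha w,\beta) \;=\; k(w,\,s_r\cdots s_1(\beta)) \;+\; \sum_{i=1}^{r} k(s_i,\, s_{i-1}\cdots s_1(\beta)).
\]
Rather than checking any telescoping of the correction sum, the paper simply applies the \emph{same} iteration with $w=e$ to recognize the sum as $k(s_\alpha,\beta)$, and then Lemma~\ref{k groupe fini} evaluates $k(s_\alpha,\beta)$ as $0$ or $-1$ exactly as you say. Your geometric route (b), reading off $k(s_\alpha w,\beta)$ from $s_\alpha(H^1_{\gamma,k}) = H^1_{s_\alpha(\gamma),k}$ together with the normalization $H^1_{-\delta,k} = H^1_{\delta,-k-1}$, is a perfectly valid and arguably more transparent alternative that the paper does not use; it has the advantage of making the source of the $-1$ term visible without any induction on word length.

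For part 2), your argument is more direct than the paper's: you factor $s_{\alpha,p} = \tau_{p\alpha}\circ s_\alpha$ and apply Lemma~\ref{proposition translation} straight to $s_\alpha w$. One small point to make explicit: Lemma~\ref{proposition translation} is stated only for $w\in W$, not $w\in W_a$, so you should note (it is immediate by writing $u=\tau_y\overline{u}$) that the formula $k(\tau_x u,\beta) = k(u,\beta) + (x,\beta^\vee)$ extends to all $u\in W_a$. The paper avoids this by first decomposing $w=\tau_x\overline{w}$ and commuting all translations to the left, so that the lemma is invoked only with $s_\alpha\overline{w}\in W$; this costs a few extra lines. Your inner-product identity $(\alpha,\beta^\vee) = -(\alpha,s_\alpha(\beta)^\vee)$ is exactly the one the paper uses to match the two forms of the answer.
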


\begin{proof}
1) Let $s_{\alpha}=s_1s_2\dots s_n$ be a reduced expression of $s_{\alpha}$. Since $s_{\alpha}$ is a reflection one has $s_1s_2\dots s_n = s_n\dots s_2s_1$. Using Proposition \ref{descente indice k} enough times we get that  ${k(s_{\alpha}w, \beta)  = k(w, s_{n}\dots s_1(\beta)) + \sum\limits_{i=1}^n k(s_i, s_{i-1}\dots s_1(\beta))}$. When $i=1$, the empty product on the righthand side is understood as the identity element.  Further, by using again Proposition \ref{descente indice k} enough times we get $k(s_{\alpha}, \beta) =  \sum\limits_{i=1}^n k(s_i, s_{i-1}\dots s_1(\beta))$. Thus we have 
\begin{align*}
k(s_{\alpha}w, \beta)  & = k(w, s_{n}\dots s_1(\beta)) + \sum\limits_{i=1}^n k(s_i, s_{i-1}\dots s_1(\beta)) \\ 
& = k(w, s_{\alpha}(\beta)) + k(s_{\alpha}, \beta).
\end{align*}

 Since ${s_{\alpha} \in W}$ and $ \beta \in \Phi^+$ we have by Lemma \ref{k groupe fini} that $k(s_{\alpha}, \beta)=0$ if $s_{\alpha}(\beta) \in \Phi^+$ and  $k(s_{\alpha}, \beta)=-1$ if $s_{\alpha}(\beta) \in \Phi^-$. The result follows.

2) This is just a consequence of 1) and Lemma  \ref{proposition translation}. Let us write $w=\tau_x\overline{w}$.  Notice first of all that $s_{\alpha}(\beta)^{\vee} = s_{\alpha}(\beta^{\vee})$ and 
\begin{align*}
k(s_{\alpha}w, \beta) = k(s_{\alpha}\tau_x\overline{w},\beta) &= k(\tau_{s_{\alpha}(x)}s_{\alpha}\overline{w},\beta) \\ &= k(s_{\alpha}\overline{w}, \beta) + ( s_{\alpha}(x), \beta^{\vee} ).
\end{align*}

Thus it follows that
\begin{align*}
k(s_{\alpha,p}w, \beta) & = k(\tau_{p\alpha}s_{\alpha}\tau_x\overline{w}, \beta) = k(\tau_{p\alpha+s_{\alpha}(x)}s_{\alpha}\overline{w}, \beta)   \\                              								   & = k(s_{\alpha}\overline{w}, \beta) + ( p\alpha +  s_{\alpha}(x), \beta^{\vee}) \\
    	                                  & = k(s_{\alpha}\overline{w}, \beta) + p( \alpha, \beta^{\vee}) + ( s_{\alpha}(x), \beta^{\vee} ) \\ 
								   &=k(s_{\alpha}\overline{w}, \beta)   - p( \alpha, s_{\alpha}(\beta^{\vee}) ) + (s_{\alpha}(x), \beta^{\vee}) \\
								   &= k(s_{\alpha}w, \beta) - p( \alpha, s_{\alpha}(\beta^{\vee}) ) \\
& = \left\{
\begin{array}{rl}
   k(w,s_{\alpha}(\beta)) - p( \alpha,  s_{\alpha}(\beta^{\vee}) )  & \text{if}~~ s_{\alpha}(\beta) \in \Phi^{+} \\
   k(w,s_{\alpha}(\beta))-1 -p( \alpha,  s_{\alpha}(\beta^{\vee}) ) & \text{if}~~ s_{\alpha}(\beta) \in \Phi^{-} 
\end{array}
\right. \\
& = \left\{
\begin{array}{rl}
   k(w,s_{\alpha}(\beta)) - p( \alpha,  s_{\alpha}(\beta)^{\vee} )  & \text{if}~~ s_{\alpha}(\beta) \in \Phi^{+} \\
   k(w,s_{\alpha}(\beta))-1 -p( \alpha,  s_{\alpha}(\beta)^{\vee} ) & \text{if}~~ s_{\alpha}(\beta) \in \Phi^{-} 
\end{array}
\right.
\end{align*}
\end{proof}

\begin{definition} Let $\alpha \in \Phi^+$ and $p \in \mathbb{Z}$.
\begin{itemize}
\item[1)]We define $L_{\alpha} \in GL_m(\mathbb{R})$ via the matrix $(\ell_{i,j}(\alpha))_{i,j \in \llbracket 1,m \rrbracket}$ where

$$
 \ell_{\beta_j, \beta_i}(\alpha) :=\ell_{j,i}(\alpha) = 
 \left\{
 \begin{array}{rl}
     1  &  \text{if} ~~ s_{\alpha}(\beta_i) = \beta_j															\\
    0  &  \text{if} ~~ s_{\alpha}(\beta_i) \neq \pm \beta_j												\\
   -1 &  \text{if} ~~ s_{\alpha}(\beta_i) = -\beta_j    													\\
 \end{array}
\right.
$$

\item[2)] We define the vector $v_{p,\alpha} \in \mathbb{R}^m$ as $v_{p,\alpha}=(v_{p,\alpha}(\gamma))_{\gamma \in \Phi^+}$ where 
$$
v_{p,\alpha}(\gamma) := 
 \left\{
 \begin{array}{rl}
    -p( \alpha , s_{\alpha}(\gamma)^{\vee}) &  \text{if} ~~ s_{\alpha}(\gamma) \in \Phi^+  					\\
        -1-p(  \alpha, s_{\alpha}(\gamma)^{\vee} ) &  \text{if} ~~ s_{\alpha}(\gamma) \in \Phi^-     			\\
 \end{array}
\right.
.
$$
For $\gamma \in \Phi^+$ we set the convention 
 $
v_{p,\alpha}(-\gamma):=-v_{p,\alpha}(\gamma).
$
\item[3)] We define $F(s_{\alpha,p})$ as the affine map such that for all $x \in \mathbb{R}^m$ one has 
 $$
 F(s_{\alpha,p})(x) := L_{\alpha}(x) + v_{p,\alpha}.
 $$
For short we will write $L_{\alpha}(w)$ instead of $L_{\alpha}(\iota(w))$. We denote $L_{\alpha}(w)[\theta]$ the coefficient in position $\theta$ of the vector $L_{\alpha}(w)$. This coefficient is called the $\theta$-coefficient of $L_{\alpha}(w)$. Similarly, the coefficient in position $\theta$ of $F(s_{\alpha,p})(w)$ is denoted $F(s_{\alpha,p})(w)[\theta]$ and we have  $F(s_{\alpha,p})(w)[\theta] = L_{\alpha}(w)[\theta] + v_{p,\alpha}(\theta)$. We  call it the $\theta$-coefficient of $F(s_{\alpha,p})(w)$.
\end{itemize}
\end{definition}

\begin{proposition}\label{proposition commutativity}
Let $\alpha, \theta \in \Phi^+$ and $p \in \mathbb{Z}$. Then 
\begin{itemize}
\item[1)]  $L_{\alpha}^2=id$ and $F(s_{\alpha,p}) \in$ Isom($\mathbb{R}^m)$.
\item[2)] $ L_{\alpha}(w)[\theta] = k(w,s_{\alpha}(\theta))$.
\item[3)]  The following diagram commutes
$$
  \xymatrix{
 W_a \ar[r]^{L_{s_{\alpha,p}}} \ar@{^{(}->}[d]_{\iota} &  W_a \ar@{^{(}->}[d]_{\iota} \\
    \mathbb{R}^m \ar[r]_{F(s_{\alpha,p})}                       & \mathbb{R}^m.
  }
$$
\end{itemize}
\end{proposition}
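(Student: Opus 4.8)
The plan is to verify the three assertions in the order they are listed, since 2) feeds into 3) and 1) is essentially a self-contained bookkeeping check.

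For part 1), the claim $L_\alpha^2 = \mathrm{id}$ follows directly from the definition of the matrix $(\ell_{i,j}(\alpha))$ together with $s_\alpha^2 = \mathrm{id}$: the $(\beta_j,\beta_i)$-entry of $L_\alpha^2$ is $\sum_k \ell_{j,k}(\alpha)\ell_{k,i}(\alpha)$, and using the convention $v_{p,\alpha}(-\gamma) = -v_{p,\alpha}(\gamma)$ (equivalently, reading the matrix as recording the signed permutation $\beta_i \mapsto s_\alpha(\beta_i)$ of $\Phi^+ \cup \Phi^-$), this sum picks out exactly the term where $s_\alpha(s_\alpha(\beta_i)) = \pm\beta_j$, i.e.\ it equals $\delta_{ij}$. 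Concretely I would argue that $L_\alpha$ is the restriction to the canonical basis of the signed-permutation action of $s_\alpha$ on $\bigoplus_{\gamma\in\Phi^+}\mathbb{R}e_\gamma$ (extended by $e_{-\gamma} = -e_\gamma$), which is manifestly an involution and an isometry for the standard inner product; hence $L_\alpha \in O(\mathbb{R}^m)$ and therefore $F(s_{\alpha,p}) = L_\alpha(\cdot) + v_{p,\alpha} \in \mathrm{Isom}(\mathbb{R}^m)$.

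For part 2), I unwind the definitions: $L_\alpha(w)[\theta] = L_\alpha(\iota(w))[\beta_\theta] = \sum_{j}\ell_{\theta, j}(\alpha)\, k(w,\beta_j)$. By definition of $\ell$, the only nonzero term is the one with $s_\alpha(\beta_j) = \pm\beta_\theta$; if $s_\alpha(\beta_j) = \beta_\theta$ it contributes $k(w,\beta_j) = k(w, s_\alpha(\beta_\theta))$, and if $s_\alpha(\beta_j) = -\beta_\theta$ it contributes $-k(w,\beta_j) = -k(w,s_\alpha(-\beta_\theta)) = -k(w,-s_\alpha(\beta_\theta)) = k(w, s_\alpha(\beta_\theta))$, using the convention $k(w,-\gamma) = -k(w,\gamma)$. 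Either way $L_\alpha(w)[\theta] = k(w, s_\alpha(\theta))$, which is the asserted formula.

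For part 3), commutativity of the diagram means $\iota(s_{\alpha,p}w) = F(s_{\alpha,p})(\iota(w))$ for all $w \in W_a$, i.e.\ $k(s_{\alpha,p}w, \theta) = L_\alpha(w)[\theta] + v_{p,\alpha}(\theta)$ for every $\theta \in \Phi^+$. By part 2) the right-hand side is $k(w, s_\alpha(\theta)) + v_{p,\alpha}(\theta)$, so splitting into the cases $s_\alpha(\theta) \in \Phi^+$ and $s_\alpha(\theta) \in \Phi^-$ and inserting the definition of $v_{p,\alpha}(\theta)$, this is exactly the formula of Proposition~\ref{proposition k(sw,beta)}, part 2). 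So part 3) is immediate once parts 2) and \ref{proposition k(sw,beta)} are in hand. The only mildly delicate point — and the place I expect to have to be careful — is the consistent handling of the sign conventions $v_{p,\alpha}(-\gamma) = -v_{p,\alpha}(\gamma)$ and $k(w,-\gamma) = -k(w,\gamma)$ when $s_\alpha(\theta)$ is negative, together with the identity $s_\alpha(\gamma)^\vee = s_\alpha(\gamma^\vee)$ already noted in the proof of Proposition~\ref{proposition k(sw,beta)}; once these are tracked correctly there is no real obstacle, as everything reduces to the formulas already established.
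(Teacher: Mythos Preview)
Your proposal is correct and follows essentially the same approach as the paper: part 1) via the signed-permutation/symmetric-orthogonal nature of $L_\alpha$, part 2) by unwinding the single nonzero entry in each row and using $k(w,-\gamma)=-k(w,\gamma)$, and part 3) by combining part 2) with Proposition~\ref{proposition k(sw,beta)}. The only cosmetic difference is that the paper phrases part 1) explicitly via $L_\alpha = L_\alpha^t$ together with the one-nonzero-entry-per-row observation, whereas you package this as ``$L_\alpha$ is a signed permutation matrix''; the content is identical.
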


\begin{proof}
1) Since $s_{\alpha}$ is an involution, the way we defined $L_{\alpha}$ shows that $\ell_{i,j}(\alpha) = \ell_{j,i}(\alpha)$, that is $L_{\alpha}=L_{\alpha}^t$. Moreover each line and each column has only one $\ell_{i,j}(\alpha) \neq 0$. Then it follows that $L_{\alpha}^2=id$ and for instance we have $L_{\alpha} \in GL(\mathbb{R}^m)$. In order to show that $F(s_{\alpha,p}) \in$ Isom$(\mathbb{R}^m)$ we just have to show that $L_{\alpha} \in O(\mathbb{R}^m)$. But $L_{\alpha}^t=L_{\alpha}=L_{\alpha}^{-1}$ which means that $L_{\alpha} \in O(\mathbb{R}^m)$.

2) From the way we defined $L_{\alpha}$ the $\theta$-coefficient of $L_{\alpha}(w)$ has the value $$\sum\limits_{\gamma \in \Phi^+}\ell_{\theta,\gamma}(\alpha)k(w,\gamma).$$ Furthermore, we know that $\ell_{\theta, \gamma}(\alpha) \neq 0$ if and only if ${\gamma = \pm s_{\alpha}(\theta)}$. Therefore, if ${\gamma=s_{\alpha}(\theta)}$ we have  $s_{\alpha}(\theta) \in \Phi^+$  and it follows  $ \ell_{\theta, \gamma}(\alpha)=1$, which implies that the $\theta$-coefficient of $L_{\alpha}(w)$ is $\ell_{\theta, \gamma}(\alpha)k(w, \gamma) =k(w, s_{\alpha}(\theta))$. If now we have  $s_{\alpha}(\theta) = -\gamma$ then $s_{\alpha}(\theta) \in \Phi^-$, which implies that ${\ell_{\theta, \gamma}(\alpha) =-1}$. Thus, the $\theta$-coefficient of $L_{\alpha}(w)$ is  
$\ell_{\theta, \gamma}(\alpha)k(w, \gamma)=(-1)k(w, -s_{\alpha}(\theta)) =k(w,s_{\alpha}(\theta))$. Hence, in all cases we have the $\theta$-coefficient of $L_{\alpha}(w)$ equal to $k(w,s_{\alpha}(\theta))$.

3) Let $\theta \in \Phi^+$. Let us show that the $\theta$-coefficient of the vector $F(s_{\alpha,p})(w)$ is the same as the $\theta$-coefficient of the vector $\iota(s_{\alpha,p}w)$. By point 2) before we know that $L_{\alpha}(w)[\theta] = k(w,s_{\alpha}(\theta))$. Then we have 
$$
F(s_{\alpha,p})(w)[\theta] = L_{\alpha}(w)[\theta] + v_{p,\alpha}(\theta)= k(w,s_{\alpha}(\theta)) + v_{p,\alpha}(\theta).
$$
 With respect to $\iota(s_{\alpha,p}w)$, via the way we defined $\iota$ we know that the $\theta$-coefficient of $\iota(s_{\alpha,p}w)$ is $k(s_{\alpha,p}w, \theta)$. However, because of Proposition \ref{proposition k(sw,beta)} we know that 
\begin{align*}
k(s_{\alpha,p}w,\theta) & = 
\left\{
\begin{array}{rl}
   k(w,s_{\alpha}(\theta)) - p(\alpha,  s_{\alpha}(\theta)^{\vee} )  & \text{if}~~ s_{\alpha}(\theta) \in \Phi^{+} \\
   k(w,s_{\alpha}(\theta))-1 -p(\alpha,  s_{\alpha}(\theta)^{\vee} ) & \text{if}~~ s_{\alpha}(\theta) \in \Phi^{-} 
\end{array}
\right. \\
& = k(w,s_{\alpha}(\theta)) + \left\{
\begin{array}{rl}
   - p( \alpha,  s_{\alpha}(\theta)^{\vee} )  & \text{if}~~ s_{\alpha}(\theta) \in \Phi^{+} \\
   -1 -p( \alpha,  s_{\alpha}(\theta)^{\vee})  & \text{if}~~ s_{\alpha}(\theta) \in \Phi^{-} 
\end{array}
\right. \\
& = k(w,s_{\alpha}(\theta)) + v_{p,\alpha}(\theta).
\end{align*}

\end{proof}

\begin{proof}[Proof of Theorem \ref{th Phi rep}]
Let $w \in W_a$. Since $W_a = \langle s_{\alpha,p}, \alpha \in \Phi^+, p \in \mathbb{Z} \rangle$ we can write ${w =s_{\alpha_1,p_1}\dots s_{\alpha_q,p_q}}$ with $\alpha_i \in \Phi^+$ and $p_i \in \mathbb{Z}$. We now want to extend to $W_a$  the definition of $F$ by $F(w) = F(s_{\alpha_1,p_1})\dots F(s_{\alpha_q,p_q})$.

 However we need to check that this definition doesn't depend on the choice of the expression of $w$. Let us assume that we have two affine maps $\varphi_1(w),\varphi_2(w) \in \text{Aff}( \mathbb{R}^m)$ making the diagram (\ref{diag}) commute. We have in particular that
$$
\varphi_1(w)(\iota(x)) = (\iota \circ L_w)(x) =  \varphi_2(w)(\iota(x))
$$
 for all $x$ running over a maximal chain of the right weak order of $W$. Furthermore, Lemma \ref{k groupe fini} implies that the set
 $$
 \{\iota(x), x~ \text{running over a maximal chain of the right weak order of}~ W\}$$ 
 is an affine basis of $\mathbb{R}^m$. It follows that $\varphi_1(w) = \varphi_2(w)$ and then there is at most one affine map making the diagram (\ref{diag}) commute. Using Proposition \ref{proposition commutativity} 3) we see that $F(s_{\alpha_1,p_1})\dots F(s_{\alpha_q,p_q})$ is such an affine map. Thus $F(w)$ is well defined and makes the diagram (\ref{diag}) commute.

Using Proposition \ref{proposition commutativity} 1) it is clear that $F(w) \in \text{Isom}(\mathbb{R}^m)$, and by definition of $F$ it is also clear that the next diagram commutes for all $w_1, w_2 \in W_a$           
$$
  \xymatrix{
 W_a \ar[r]^{L_{w_1}} \ar@{^{(}->}[d]_{\iota} \ar@/^2pc/[rr]^{L_{w_2w_1}} &  W_a \ar@{^{(}->}[d]_{\iota} \ar[r]^{L_{w_2}} & W_a \ar@{^{(}->}[d]_{\iota}\\
    \mathbb{R}^m \ar[r]_{F(w_1)}    \ar@/_2pc/[rr]_{F(w_2w_1)}                   & \mathbb{R}^m                \ar[r]_{F(w_2)}                & \mathbb{R}^m .
  }
$$      

Thus $F$ is a morphism. This morphism is injective because $F(w_1) = F(w_2)$ implies for instance that  $F(w_1)(\iota(e)) = F(w_2)(\iota(e))$ which means that $\iota \circ L_{w_1}(e) = \iota \circ L_{w_2}(e)$, that is $\iota(w_1) = \iota(w_2)$. Since $\iota(w_1)=(k(w_1,\alpha))_{\alpha \in \Phi^+}$ and $ \iota(w_2) = (k(w_2,\alpha))_{\alpha \in \Phi^+}$ it follows that $w_1=w_2$.
   \end{proof}

\section{Structure of affine variety on $W_a$}\label{affine var}

Let $W_a$ be an affine Weyl group, $\Phi$ its crystallographic root system with $\Phi^+=\{\beta_1,\dots ,\beta_m \}$, and $\Delta = \{\alpha_1,\dots ,\alpha_n\}$ its simple system. The aim of this section is to understand, from a new geometric point of view, affine Coxeter groups based on their $\Phi^+$-tuple of integers obtained in \cite{JYS1}. Shi gave in \cite{JYS1} a characterization of elements in $W_a$ via a collection of inequalities. He also gave an easier characterization of the $\Phi^+$-tuples in Theorem \ref{Shi ineq simple}. The goal here is to transform these inequalities into equations.  From now on we will denote $A[X_{\Delta}] := A[X_{\alpha_1},\dots ,X_{\alpha_n}]$ and $A[X_{\Phi^+}] := A[X_{\beta_1},\dots ,X_{\beta_m}]$ where $A$ is a commutative ring. In particular we have $ A[X_{\alpha_1},\dots ,X_{\alpha_n}] \subset A[X_{\beta_1},\dots ,X_{\beta_m}] $.
For $w \in W_a$ and $Q \in A[X_{\Delta}]$ we denote by
$$
Q(w):=Q(k(w,\alpha_1),\dots ,k(w,\alpha_n)).
$$

\subsection{Decomposition of the coefficients $k(w,\alpha)$}\label{decompo coeff} In this section we give the main tool in order to construct the Shi variety.

\begin{theorem}\label{polynome}
Let $w \in W_a$. Then we have
\begin{itemize}
\item[1)] For all $\theta \in \Phi^+-\Delta$ there exists a linear polynomial $P_{\theta} \in \mathbb{Z}[X_{\Delta}]$  with positive coefficients and  $\lambda_{\theta}(w) \in \llbracket 0, h(\theta^{\vee})-1\rrbracket$ such that 
\begin{equation} \label{eq:P}
k(w,\theta) = P_{\theta}(w) + \lambda_{\theta}(w).
\end{equation}

\item[2)]  $ P_{\theta} = P_{\alpha} + P_{\beta}$ for all $\alpha, \beta \in \Phi^+$ such that ${\theta^{\vee} = \alpha^{\vee} + \beta
^{\vee}}$.
\end{itemize}
\end{theorem}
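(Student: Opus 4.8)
The plan is to induct on the height $h(\theta)$ (equivalently, since $h(\theta)$ and $h(\theta^{\vee})$ increase together, on $h(\theta^{\vee})$), using the relation $k_{\alpha}+k_{\beta} \le k_{\gamma} \le k_{\alpha}+k_{\beta}+1$ from Theorem~\ref{thJYS3} with $\gamma^{\vee} = \alpha^{\vee}+\beta^{\vee}$. First I would set the base case: for $\theta = \alpha_i \in \Delta$ we simply take $P_{\alpha_i} = X_{\alpha_i}$ and $\lambda_{\alpha_i}(w) = 0$, consistent with $h(\alpha_i^{\vee})-1 = 0$. Then, given $\theta \in \Phi^+ - \Delta$, a standard fact about root systems is that one can write $\theta^{\vee} = \alpha^{\vee} + \beta^{\vee}$ with $\alpha, \beta \in \Phi^+$ of strictly smaller height (this is where I would invoke the usual "every non-simple positive root is a sum of a positive root and a simple root" type statement, transported to the coroot side via the bijection $\gamma \mapsto \gamma^{\vee}$; one must check $\alpha^{\vee}+\beta^{\vee}$ really is a coroot, which is exactly the hypothesis $\gamma = (\alpha^{\vee}+\beta^{\vee})^{\vee} \in \Phi^+$ appearing in Theorem~\ref{thJYS3}). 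By the induction hypothesis, $k(w,\alpha) = P_{\alpha}(w) + \lambda_{\alpha}(w)$ and $k(w,\beta) = P_{\beta}(w) + \lambda_{\beta}(w)$ with $P_{\alpha}, P_{\beta} \in \mathbb{Z}[X_{\Delta}]$ linear with nonnegative coefficients. I then define $P_{\theta} := P_{\alpha} + P_{\beta}$ — this is forced, and it immediately gives part 2) provided I can show the definition is independent of the chosen decomposition $\theta^{\vee} = \alpha^{\vee}+\beta^{\vee}$; and I set $\lambda_{\theta}(w) := k(w,\theta) - P_{\theta}(w)$.

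The two things left to verify are that $\lambda_{\theta}(w)$ is an integer in $\llbracket 0, h(\theta^{\vee})-1\rrbracket$ and that $P_{\theta}$ is well-defined (independent of the decomposition). For the range: from Theorem~\ref{thJYS3}, $k(w,\theta) - k(w,\alpha) - k(w,\beta) \in \{0,1\}$, so
\begin{align*}
\lambda_{\theta}(w) &= k(w,\theta) - P_{\theta}(w) = \bigl(k(w,\theta) - k(w,\alpha) - k(w,\beta)\bigr) + \lambda_{\alpha}(w) + \lambda_{\beta}(w),
\end{align*}
which is a nonnegative integer, and is bounded above by $1 + (h(\alpha^{\vee})-1) + (h(\beta^{\vee})-1) = h(\alpha^{\vee}) + h(\beta^{\vee}) - 1 = h(\theta^{\vee}) - 1$, using additivity of height and $h(\theta^{\vee}) = h(\alpha^{\vee}) + h(\beta^{\vee})$. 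That $P_{\theta}$ has nonnegative coefficients is clear since it is a sum of two such polynomials, and it is linear for the same reason. For well-definedness of $P_{\theta}$: if $\theta^{\vee} = \alpha^{\vee}+\beta^{\vee} = \alpha'^{\vee}+\beta'^{\vee}$ are two decompositions, then for \emph{every} $w \in W_a$ we have $k(w,\theta) - P_{\alpha}(w) - P_{\beta}(w) = \lambda_{\alpha}(w)+\lambda_{\beta}(w) + \varepsilon$ and similarly with primes, where the $\varepsilon$'s lie in $\{0,1\}$; this alone does not instantly give $P_{\alpha}+P_{\beta} = P_{\alpha'}+P_{\beta'}$ as polynomials. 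So instead I would argue that $P_{\alpha}+P_{\beta}$ and $P_{\alpha'}+P_{\beta'}$ are the unique linear polynomial whose evaluation tracks the "leading/unbounded part" of $k(w,\theta)$: concretely, using Lemma~\ref{proposition translation}, for $w = \tau_x$ with $x \in \mathbb{Z}\Phi$ one has $k(\tau_x, \theta) = (x,\theta^{\vee})$, while $\lambda_{\theta}(\tau_x)$ stays bounded; evaluating $P_{\theta}$ at the tuple $(\ (x,\alpha_1^{\vee}), \dots, (x,\alpha_n^{\vee})\ )$ and letting $x$ range over a sublattice, the linear polynomial $P_{\theta}$ is pinned down (a linear polynomial is determined by its values on $n+1$ affinely independent lattice points), and $(x,\theta^{\vee}) = \sum_i (x,\alpha_i^{\vee})\cdot(\text{coeff of }\alpha_i^{\vee}\text{ in }\theta^{\vee})$ forces $P_{\theta} = \sum_i c_i X_{\alpha_i}$ where $\theta^{\vee} = \sum_i c_i \alpha_i^{\vee}$, independently of any decomposition.

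I expect the main obstacle to be exactly this last point — making rigorous that $P_{\theta}$ is canonical, and in fact equals $\sum_i c_i X_{\alpha_i}$ with $\theta^{\vee} = \sum_i c_i \alpha_i^{\vee}$ — rather than merely "some" sum $P_{\alpha}+P_{\beta}$ that might a priori depend on choices. Once one identifies $P_{\theta}$ with this explicit expression read off from the coordinates of $\theta^{\vee}$ in the simple coroots, part 2) becomes the triviality $c_i = a_i + b_i$ when $\theta^{\vee} = \alpha^{\vee}+\beta^{\vee}$, and positivity of coefficients is automatic because a positive coroot has nonnegative coordinates in the simple coroots. A cleaner route, which I would actually take to avoid the well-definedness worry entirely, is to \emph{define} $P_{\theta} := \sum_{i=1}^n c_i X_{\alpha_i}$ from the start where $\theta^{\vee} = \sum c_i \alpha_i^{\vee}$, then prove $k(w,\theta) - P_{\theta}(w) \in \llbracket 0, h(\theta^{\vee})-1\rrbracket$ by induction on height using Theorem~\ref{thJYS3} as above; part 2) is then immediate from $c_i = a_i+b_i$. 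The induction still needs the root-system fact that a non-simple positive coroot decomposes as a sum of two positive coroots of smaller height, which is the one external ingredient I would cite.
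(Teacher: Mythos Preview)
Your proposal is correct and follows essentially the same inductive scheme as the paper: induct on $h(\theta^{\vee})$, decompose $\theta^{\vee}=\alpha^{\vee}+\beta^{\vee}$ with $\alpha,\beta\in\Phi^+$, set $P_{\theta}:=P_{\alpha}+P_{\beta}$, and use Theorem~\ref{thJYS3} to bound $\lambda_{\theta}(w)=\lambda_{\alpha}(w)+\lambda_{\beta}(w)+\varepsilon$ with $\varepsilon\in\{0,1\}$ by $h(\theta^{\vee})-1$.

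The one place you go beyond the paper is the well-definedness of $P_{\theta}$. The paper's proof simply fixes one decomposition $\theta^{\vee}=\alpha^{\vee}+\beta^{\vee}$, defines $P_{\theta}$ from it, and does not verify within this proof that $P_{\theta}=P_{\alpha'}+P_{\beta'}$ for every other decomposition; the explicit identification $P_{\theta}=\sum_i c_i X_{\alpha_i}$ with $\theta^{\vee}=\sum_i c_i\alpha_i^{\vee}$ (which settles the matter) is only established afterwards in Lemma~\ref{decomposition equations}. Your ``cleaner route'' of \emph{defining} $P_{\theta}:=\sum_i c_i X_{\alpha_i}$ at the outset and then proving the bound on $\lambda_{\theta}(w)$ by the same induction is genuinely tidier: it makes part~2) an immediate consequence of the additivity of coroot coordinates, removes any worry about choice-dependence, and makes positivity of the coefficients automatic. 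Your alternative argument via Lemma~\ref{proposition translation} (pinning down $P_{\theta}$ by evaluating on translations $\tau_x$) also works and is in the same spirit as how the paper later uses that lemma.
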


\begin{proof}
If $\theta \in \Delta$ then we set $P_{\theta} = X_{\theta}$ and  $\lambda_{\theta}(w) = 0$. Assume now that $\theta \in \Phi^+\setminus \Delta$.
We proceed by induction on the height $h$ of coroots. We have ${\theta^{\vee} \in (\Phi^{\vee})^+}$ and then there exist $ \alpha, \beta \in \Phi^+$ such that $\theta^{\vee} = \alpha^{\vee} + \beta^{\vee}$.
 Notice that it is always possible to write an element of $(\Phi^{\vee})^+\setminus\Delta^{\vee}$ as a sum of two others elements of $(\Phi^{\vee})^+$.  Thus we have $ \theta=(\alpha^{\vee} + \beta^{\vee})^{\vee} \in \Phi^+$ and because of Theorem \ref{thJYS3} it follows $k(w,\alpha) + k(w,\beta) \leq k(w,\theta) \leq k(w,\alpha) + k(w,\beta)+1$. Hence  $k(w,\theta) = k(w,\alpha) + k(w,\beta) + \gamma_{\theta}(w)$ where $\gamma_{\theta}(w) \in \llbracket 0,1 \rrbracket$.

If $h(\theta^{\vee}) = 2$ one has necessarily $\alpha^{\vee}, \beta^{\vee} \in \Delta^{\vee}$. Therefore by setting $P_{\theta} := X_{\alpha} + X_{\beta}$ and $\lambda_{\theta}(w):= \gamma_{\theta}(w)$ we have shown the base case.

Assume now that $h(\theta^{\vee}) = d + 1$ and assume that the formula (\ref{eq:P}) is true for all coroots of height less than $d+1$. We also assume that for all $\gamma \in \Phi^+$ satisfying $h(\gamma^{\vee}) < d+1$ the polynomial $P_{\gamma}$ is linear and its coefficients are positive. Since $\alpha^{\vee} + \beta^{\vee} = \theta^{\vee}$, we have $h(\alpha^{\vee}) < h(\theta^{\vee})$ and $h(\beta^{\vee}) < h(\theta^{\vee})$. It follows that ${k(w,\alpha) = P_{\alpha}(w) + \lambda_{\alpha}(w)}$  and $k(w,\beta) = P_{\beta}(w) + \lambda_{\beta}(w)$ with $P_{\alpha}, P_{\beta} \in \mathbb{Z}[X_{\Delta}]$ both linear, $\lambda_{\alpha}(w) \in \llbracket 0,h(\alpha^{\vee})-1 \rrbracket$  and $\lambda_{\beta}(w) \in \llbracket 0, h(\beta^{\vee})-1 \rrbracket$. It follows that:
\begin{equation*}
k(w,\theta) = P_{\alpha}(w) + \lambda_{\alpha}(w)+ P_{\beta}(w) + \lambda_{\beta}(w) + \gamma_{\theta}(w).
\end{equation*}

By setting $P_{\theta} := P_{\alpha} + P_{\beta}$ and $\lambda_{\theta}(w) = \lambda_{\alpha}(w)+ \lambda_{\beta}(w) + \gamma_{\theta}(w)$ we have $P_{\theta}$ linear, its coefficients are positive since those of $P_{\alpha}$ and $P_{\beta}$ are positive, and finally since ${h(\theta^{\vee}) = h(\alpha^{\vee}) + h(\beta^{\vee})}$ it results that
${\lambda_{\theta}(w) \in \llbracket 0, h(\theta^{\vee})-1\rrbracket}$. This ends the induction and the proof of points 1) and 2).
\end{proof}

\begin{example}\label{coeffB} Let $\Delta(B_n) := \{\alpha_1,\dots ,\alpha_n\}$ with its Dynkin diagram:
\begin{figure}[h!]
\centering
\includegraphics[scale=0.5]{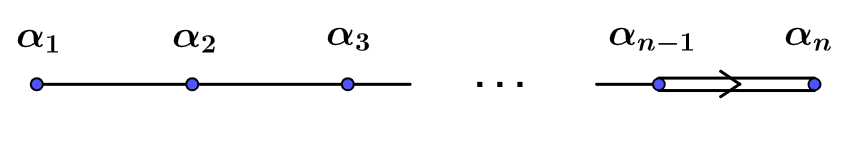} 
\caption{Dynkin diagram of type $B_n$.}
\label{Dyn Bn}
\end{figure}

It is well-known (see, for example, \cite{BOURB}) that its positive root system decomposes as 
$$
 \Phi^+(B_n) = I_1 \sqcup I_2 \sqcup I_3,
$$
 where ${I_1:= \{\sum\limits_{t=i}^{j-1} \alpha_t~ |~1 \leq i<j \leq n \}}$, ${I_2 :=\{\sum\limits_{t=i}^n \alpha_t ~|~1 \leq i \leq n \}}$ and $$I_3 := \{\sum\limits_{t=i}^{j-1} \alpha_t + 2 \sum\limits_{t=j}^n \alpha_t ~| ~ 1 \leq i <j \leq n \}.$$ It turns out that the short roots are those in $I_2$ and the long ones are those in $I_1 \sqcup I_3$. Moreover, the only simple root that is short is $\alpha_n$. Let $\alpha \in \Phi^+(B_n)$ and $w \in W_a$. After computation we get:
\begin{itemize}
\item[i)] If $\alpha = \sum\limits_{t=i}^{j-1}\alpha_t \in I_1$ then there exists $\lambda_{\alpha}(w) \in \llbracket 0, j-i-1 \rrbracket$ such that 
$k(w,\alpha) = \sum\limits_{t=i}^{j-1}k(w,\alpha_t) + \lambda_{\alpha}(w)$. Here $P_{\alpha} = \sum\limits_{k=i}^{j-1}X_{\alpha_k}$ and it is easy to see that $j-i =h(\alpha) = h(\alpha^{\vee})$.

\item[ii)]
 If $\alpha = \sum\limits_{t=i}^{n}\alpha_t  \in I_2$ then there exists $\lambda_{\alpha}(w) \in \llbracket 0, 2(n-i)\rrbracket$ such that $k(w,\alpha) = 2\sum\limits_{t=i}^{n-1}k(w,\alpha_t) + k(w,\alpha_n) + \lambda_{\alpha}(w)$. Here $P_{\alpha} :=2\sum\limits_{k=i}^{n-1}X_{\alpha_k} + X_{\alpha_n}$ and it is easy to see that $h(\alpha^{\vee}) = 2(n-i)+1$.

\item[iii)] If $\alpha = \sum\limits_{t=i}^{j-1}\alpha_t + 2 \sum\limits_{t=j}^n \alpha_t \in I_3$ then there exists $\lambda_{\alpha}(w) \in \llbracket 0, 2n-(i+j)\rrbracket$ such that 
$$
  k(w,\alpha)  = 
\left\{
\begin{array}{ll}
 \sum\limits_{t=i}^{j-1}k(w,\alpha_t) +  2\sum\limits_{t=j}^{n-1}k(w,\alpha_t) + k(w,\alpha_n) + \gamma_{\alpha}(w)& \text{if}~~ j<n \\
  \sum\limits_{t=i}^{n}k(w,\alpha_t)  + \gamma_{\alpha}(w)& \text{if}~~j=n.
\end{array}
\right. \\
$$

\noindent Here we have 
$$
P_{\alpha}  = 
\left\{
\begin{array}{ll}
 \sum\limits_{k=i}^{j-1}X_{\alpha_k} +  2\sum\limits_{k=j}^{n-1}X_{\alpha_k}+ X_{\alpha_n}& \text{if}~~ j<n \\
  \sum\limits_{k=i}^{n}X_{\alpha_k}  & \text{if}~~j=n,
\end{array}
\right. \\
$$
and it is easy to see that $h(\alpha^{\vee}) = 2n-(i+j) +1$.
\end{itemize}
\end{example}

\subsection{Definition of the Shi variety} Before giving the main definitions of this section we want to discuss some terminologies and notations, particularly to clarify what we mean by \say{affine variety} in this article. During the 20th century the notion of (algebraic) variety evolved and many sophisticated and deep notions can be involved. For some people an affine variety $V$ over a field $k$ is just the set of solutions of polynomial equations with coefficients in $k$, which is sometimes called an algebraic set. For others it is the ringed space $(V, \mathcal{O}_V)$ where $\mathcal{O}_V$ is the sheaf of regular functions over $V$, or again for others it is a ringed space $(X, \mathcal{O}_X)$ isomorphic to $(V, \mathcal{O}_V)$. 

In this article the term \emph{affine variety} is employed with its simplest instance, that is as solutions of polynomial equations. We only use elementary notions about affine varieties and almost no background in algebraic geometry is required.

Let $k$ be a field and $\mathcal{I}$ be an ideal of $k[X_1,\dots, X_p]$. We define
$$
V(I) := \{x \in k^p~|~P(x) = 0~\forall P \in \mathcal{I}\}.
$$

\medskip

\begin{definition} Let $\theta \in \Phi^+$. Write $I_{\theta} := \llbracket 0, h(\theta^{\vee})-1 \rrbracket$. Notice that if $\theta$ is a simple root then $I_{\theta}=\{0\}$. For any root $\theta \in \Delta$ we set $P_{\theta} = X_{\theta}$ and $\lambda_{\theta}=0$. We denote by $P_{\theta}[\lambda_{\theta}]$ the polynomial $P_{\theta}+ \lambda_{\theta}-X_{\theta} \in \mathbb{Z}[X_{\Phi^+}]$. We define the ideal $J_{W_a}$ of $\mathbb{Z}[X_{\Phi^+}]$ as $${J_{W_a} := \sum\limits_{\alpha \in \Phi^+} ~\langle \prod\limits_{\lambda_{\alpha} \in I_{\alpha}} P_{\alpha}[\lambda_{\alpha}] \rangle}.$$ We define then the affine variety $X_{W_a}$ by
$$
  X_{W_a} := V(J_{W_a}).
$$
\end{definition}

\begin{remark}
The affine variety $X_{W_a}$ is seen as an affine variety over the field $\mathbb{R}$, but could be also defined over the field $\mathbb{C}$. We will see later on that it is an union of affine subspaces of $\mathbb{R}^m$. Therefore, each notion involved about the components is elementary.
\end{remark}

		\medskip

\begin{definition}\label{admissible vector}
 We say that $v=(v_{\alpha})_{\alpha \in \Phi^+} \in \mathbb{N}^m$ is an \emph{admissible vector} (or just \emph{admissible}) if it satisfies the boundary conditions, that is if for all $\alpha \in \Phi^+$ one has $v_{\alpha} \in I_{\alpha}$. For instance, all the $\lambda:=(\lambda_{\alpha})_{\alpha \in \Phi^+}$ coming from the polynomials $P_{\alpha}[\lambda_{\alpha}]$ give rise to admissible vectors. Furthermore, each admissible vector arises this way. For short we will write $\lambda$ instead of $(\lambda_{\alpha})_{\alpha \in \Phi^+}$.
\end{definition}
 
 		\medskip		
 \begin{definition}
   Let $\lambda$ be an admissible vector. We denote 
  $$
  J_{W_a}[\lambda] := \sum\limits_{\alpha \in \Phi^+} \langle P_{\alpha}[\lambda_{\alpha}] \rangle = \langle P_{\alpha}[\lambda_{\alpha}],~\alpha \in \Phi^+\rangle,
 $$ 
		and						
$$
X_{W_a}[\lambda] := V(J_{W_a}[\lambda])~~\text{and}~~X_{W_a}[0] := V(J_{W_a}[0_{\mathbb{R}^m}]).
$$
 \end{definition}

		\medskip

\begin{remark}
We recall that our goal is to have a bijection between $W_a$ and the integral points of a variety. $X_{W_a}$ seems to be a good candidate because the way we defined it shows that $W_a \hookrightarrow X_{W_a}(\mathbb{Z})$. However, it turns out that $X_{W_a}$ is too large. We explain this later on.
\end{remark}

		\medskip		
\begin{example}
Let us take $W_a =W(\widetilde{B}_2)$ with simple system $\Delta = \{\alpha, \beta\}$ and $\alpha$ the short root of $\Delta$. The positive root system is $\Phi^+=\{ \alpha, \beta, \alpha + \beta, 2\alpha +  \beta \}$. Because of Example \ref{coeffB} we know that there exists $\lambda_{\alpha +\beta}(w) \in \llbracket 0,2 \rrbracket$ such that
$k(w,\alpha + \beta) = k(w,\alpha) + 2k(w,\beta) + \lambda_{\alpha +\beta}(w) $, and there exists $ \lambda_{2\alpha + \beta}(w) \in \llbracket 0, 1 \rrbracket$ such that $k(w,2\alpha + \beta) = k(w,\alpha)+ k(w,\beta) +  \lambda_{\alpha + 2 \beta}(w)$. Hence we have 6 choices of $\lambda=(\lambda_{\theta})_{\theta \in \Phi^+}$. However, if we set  $k(w,\alpha) = 0$ and $k(w,\beta) = 0$ we see in Figure \ref{alcoveB2} that there are only 4 alcoves satisfying this and not 6. Those are $(0,0,0,0), (0,0,1,0),(0,0,1,1)$ and $(0,0,2,1)$ with the reading direction of Figure \ref{senslectureB2}.

\begin{figure}[h!]
\centering
\includegraphics[scale=0.2]{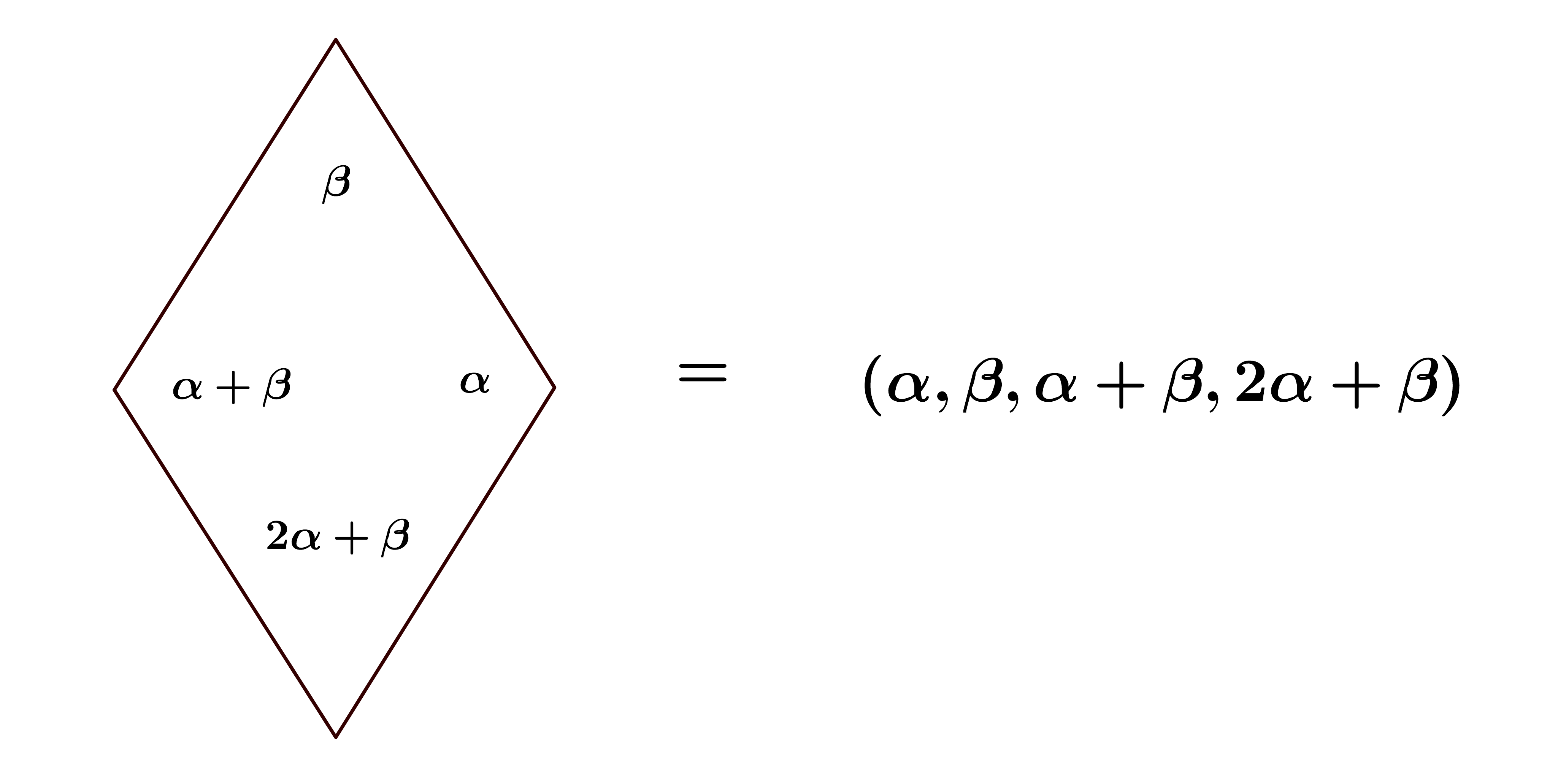} 
\caption{Reading direction of a $\Phi^+$-tuple in $W(\widetilde{B}_2)$.}
\label{senslectureB2}
\end{figure}

\begin{figure}[h!]
\centering
\includegraphics[scale=0.35]{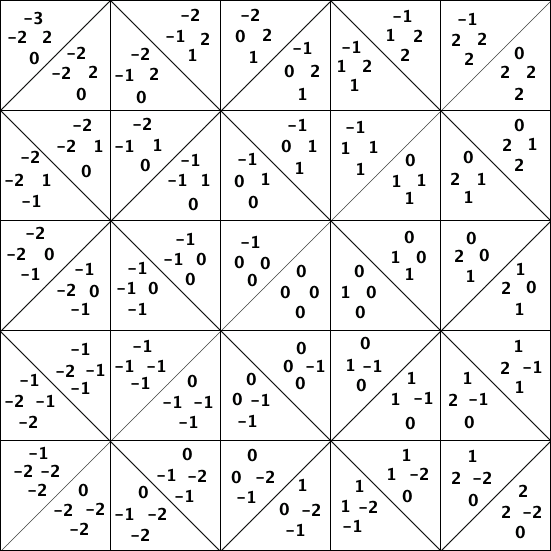} 
\caption{Alcoves in $W(\widetilde{B}_2)$.}
\label{alcoveB2}
\end{figure}
\end{example}

\newpage

\subsection{Admissible, admitted vectors and components of $\widehat{X}_{W_a}$ }

\begin{definition}\label{admitted vector}
We will denote by $S[W_a]$ the system of all the inequalities coming from Theorem \ref{thJYS1} or equivalently Theorem $\ref{thJYS3}$. Let $\lambda$ be an admissible vector. We say that $\lambda$ is \emph{admitted} if it satisfies the system $S[W_a]$.
\end{definition}

\begin{proposition}
Let $w \in W_a$. Let us write $k(w,\theta) = P_{\theta}(w) + \lambda_{\theta}(w)$ for all $\theta \in \Phi^+$. Then $(k(w,\theta))_{\theta \in \Phi^+}$ satisfies $S[W_a]$ if and only if $(\lambda_{\theta}(w))_{\theta \in \Phi^+}$ satisfies $S[W_a]$.
\end{proposition}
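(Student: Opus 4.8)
The plan is to reduce the claimed equivalence, triple by triple, to the single algebraic identity $P_\gamma = P_\alpha + P_\beta$ supplied by Theorem \ref{polynome}(2); once that identity is in hand, the two systems of inequalities become literally the same collection of constraints up to subtraction of a common term.

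First I would fix the convenient form of $S[W_a]$, namely the inequalities (\ref{Shi ineq simple}) of Theorem \ref{thJYS3}: for every pair $\alpha,\beta \in \Phi^+$ with $\gamma := (\alpha^{\vee}+\beta^{\vee})^{\vee} \in \Phi^+$ one requires $k_\alpha + k_\beta \leq k_\gamma \leq k_\alpha + k_\beta + 1$. Since Theorems \ref{thJYS1} and \ref{thJYS3} give equivalent characterizations of alcoves (so their associated systems are equivalent), it suffices to argue with this form. For any such triple the coroot $\gamma^{\vee}$ is a sum of two positive coroots, hence $\gamma^{\vee} \notin \Delta^{\vee}$, so Theorem \ref{polynome}(2) applies to $\gamma$ and yields $P_\gamma = P_\alpha + P_\beta$ in $\mathbb{Z}[X_\Delta]$; evaluating at $w$ gives $P_\gamma(w) = P_\alpha(w) + P_\beta(w)$. (This holds for \emph{every} decomposition $\gamma^{\vee} = \alpha^{\vee}+\beta^{\vee}$, not merely the one used to build $P_\gamma$, so all triples occurring in $S[W_a]$ are covered; the case $\alpha = \beta$, should it occur, is included as well.)

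Next I would substitute $k(w,\theta) = P_\theta(w) + \lambda_\theta(w)$ (Theorem \ref{polynome}(1), with $P_\theta = X_\theta$, $\lambda_\theta(w)=0$ when $\theta \in \Delta$) into the $(\alpha,\beta,\gamma)$-inequality. Using $P_\gamma(w) = P_\alpha(w) + P_\beta(w)$ one computes $k(w,\gamma) - k(w,\alpha) - k(w,\beta) = \lambda_\gamma(w) - \lambda_\alpha(w) - \lambda_\beta(w)$, i.e. the common summand $P_\alpha(w)+P_\beta(w)$ cancels from all three sides of the double inequality. Hence the $(\alpha,\beta,\gamma)$-inequality of $S[W_a]$ holds for $(k(w,\theta))_{\theta \in \Phi^+}$ if and only if it holds for $(\lambda_\theta(w))_{\theta \in \Phi^+}$, that is, if and only if $\lambda_\alpha(w) + \lambda_\beta(w) \leq \lambda_\gamma(w) \leq \lambda_\alpha(w) + \lambda_\beta(w) + 1$.

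Finally, since $S[W_a]$ is by definition the conjunction of these inequalities over all admissible triples $(\alpha,\beta,\gamma)$, taking the conjunction of the per-triple equivalences above gives that $(k(w,\theta))_\theta$ satisfies $S[W_a]$ if and only if $(\lambda_\theta(w))_\theta$ does, as claimed. There is essentially no serious obstacle here: the only points requiring a word of care are the legitimacy of passing to the simpler form of Shi's inequalities and the verification that the additivity $P_\gamma = P_\alpha + P_\beta$ of Theorem \ref{polynome}(2) indeed covers every triple appearing in $S[W_a]$.
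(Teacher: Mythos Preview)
Your argument is correct and follows essentially the same route as the paper: use the additivity $P_{\gamma}=P_{\alpha}+P_{\beta}$ from Theorem~\ref{polynome}(2) to cancel the $P$-contributions in each instance of the inequalities~(\ref{Shi ineq simple}), reducing the $(k(w,\theta))$-system to the $(\lambda_\theta(w))$-system triple by triple. Your write-up is in fact slightly more careful than the paper's in noting that the additivity covers every triple appearing in $S[W_a]$ and in making the final conjunction explicit.
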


\begin{proof}
Let $\theta, \alpha, \beta \in \Phi^+$ such that  $\theta^{\vee} = \alpha^{\vee} + \beta^{\vee}$. Hence by Theorem \ref{thJYS3} one has $k(w,\alpha) + k(w,\beta) \leq k(w,\theta)\leq k(w,\alpha) + k(w,\beta) + 1$. Moreover by Theorem \ref{polynome} we know that $k(w,\alpha) = P_{\alpha}(w) + \lambda_{\alpha}(w)$ and $k(w,\beta) = P_{\beta}(w) + \lambda_{\beta}(w)$. We also know by Theorem \ref{polynome} that $P_{\theta} = P_{\alpha} + P_{\beta}$. Therefore we get $P_{\theta}(w) = P_{\alpha}(w) + P_{\beta}(w)$ and the previous inequalities become
$$
P_{\alpha}(w) + \lambda_{\alpha}(w) + P_{\beta}(w) + \lambda_{\beta}(w)\leq P_{\theta}(w) + \lambda_{\theta}(w) \leq P_{\alpha}(w) + \lambda_{\alpha}(w) + P_{\beta}(w) + \lambda_{\beta}(w) +1. 
$$
These are the same as 
$$
\lambda_{\alpha}(w) + \lambda_{\beta}(w)\leq  \lambda_{\theta}(w) \leq  \lambda_{\alpha}(w)  + \lambda_{\beta}(w) +1. 
$$
Thus  $(k(w,\alpha))_{\alpha \in \Phi^+} \in S[W_a]$ if and only if $(\lambda_{\alpha})_{\alpha \in \Phi^+}  \in S[W_a]$.
\end{proof}

\begin{itemize}
\item[]
\end{itemize}

The following proposition gives the irreducible components of $X_{W_a}$ together with their shape. We will strongly use this decomposition further.

\begin{proposition}\label{simplification}
$X_{W_a}$ decomposes as $X_{W_a} = \bigsqcup\limits_{\lambda~\text{admissible}} X_{W_a}[\lambda]$. The irreducible components of $X_{W_a}$ are the $X_{W_a}[\lambda]$ for $\lambda$ admissible and we have  ${X_{W_a}[\lambda]={X_{W_a}[0]+ \lambda}}$. Moreover, each irreducible component is of dimension $|\Delta|=n$. 
\end{proposition}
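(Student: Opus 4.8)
The plan is to unpack the definition of $J_{W_a}$ and use the fact that each generator $\prod_{\lambda_\alpha \in I_\alpha} P_\alpha[\lambda_\alpha]$ is a product of linear affine-in-$X_{\Phi^+}$ polynomials, so that $V(J_{W_a})$ decomposes as a union of the vanishing loci obtained by choosing, for each $\alpha \in \Phi^+$, one factor $P_\alpha[\lambda_\alpha]$. Concretely, a point $x \in \mathbb{R}^m$ lies in $X_{W_a}$ iff for every $\alpha$ the product $\prod_{\lambda_\alpha \in I_\alpha} P_\alpha[\lambda_\alpha](x) = 0$, which (since $\mathbb{R}$ is a domain) forces, for each $\alpha$, at least one value $\lambda_\alpha \in I_\alpha$ with $P_\alpha[\lambda_\alpha](x) = 0$. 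Picking such a $\lambda_\alpha$ for every $\alpha$ yields an admissible vector $\lambda$ with $x \in X_{W_a}[\lambda]$. Conversely $X_{W_a}[\lambda] \subseteq X_{W_a}$ is immediate from $J_{W_a} \subseteq$ the ideal generated by all the $P_\alpha[\lambda_\alpha]$ once one fixes the choice of factor. This gives $X_{W_a} = \bigcup_{\lambda~\text{admissible}} X_{W_a}[\lambda]$.

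Next I would identify each $X_{W_a}[\lambda]$ explicitly. Recall $P_\theta[\lambda_\theta] = P_\theta + \lambda_\theta - X_\theta$ where $P_\theta \in \mathbb{Z}[X_\Delta]$ is linear with nonnegative coefficients and $P_\theta = X_\theta$ when $\theta \in \Delta$. Hence the equations $P_\theta[\lambda_\theta](x) = 0$ for $\theta \in \Delta$ are vacuous ($0 = 0$), while for $\theta \in \Phi^+ \setminus \Delta$ they read $X_\theta = P_\theta(X_{\alpha_1},\dots,X_{\alpha_n}) + \lambda_\theta$, expressing each non-simple coordinate as an affine function of the $n$ simple coordinates $X_{\alpha_1},\dots,X_{\alpha_n}$. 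Therefore $X_{W_a}[\lambda]$ is the graph of an affine map $\mathbb{R}^n \to \mathbb{R}^{m-n}$, so it is an affine subspace of $\mathbb{R}^m$ of dimension exactly $n = |\Delta|$; in particular it is irreducible. Setting $\lambda = 0$ gives $X_{W_a}[0]$ as the linear subspace defined by $X_\theta = P_\theta$, and since the defining equations of $X_{W_a}[\lambda]$ differ from those of $X_{W_a}[0]$ only by the additive constants $\lambda_\theta$ (in coordinate $\theta$), we get the translation identity $X_{W_a}[\lambda] = X_{W_a}[0] + \lambda$, reading $\lambda \in \mathbb{R}^m$ as the vector with $\theta$-component $\lambda_\theta$ (and $0$ in the simple coordinates, consistent with $\lambda_\theta = 0$ for $\theta \in \Delta$).

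It then remains to upgrade the union to a \emph{disjoint} union and to confirm these are exactly the irreducible components. For disjointness: if $x \in X_{W_a}[\lambda] \cap X_{W_a}[\mu]$ with $\lambda,\mu$ admissible, then for every $\theta \in \Phi^+$ we have $P_\theta(x) + \lambda_\theta = x_\theta = P_\theta(x) + \mu_\theta$, whence $\lambda_\theta = \mu_\theta$, so $\lambda = \mu$. (Here I use that $P_\theta$ depends only on the simple coordinates, which are common to $x$.) Since each $X_{W_a}[\lambda]$ is an irreducible affine subspace of dimension $n$, none is contained in another (they are disjoint and nonempty, as the graph of an affine map is nonempty), so the decomposition $X_{W_a} = \bigsqcup_{\lambda~\text{admissible}} X_{W_a}[\lambda]$ is the decomposition into irreducible components.

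The main obstacle I anticipate is purely bookkeeping rather than conceptual: one must be careful that the "choice of factor for each $\alpha$" argument genuinely produces \emph{admissible} $\lambda$ (this is automatic since the chosen $\lambda_\alpha$ ranges over $I_\alpha$ by construction), and that the equations $P_\theta[\lambda_\theta] = 0$ really do cut out a graph — i.e. that the $n$ simple coordinates remain free and every other coordinate is determined. This is exactly point 1) of Theorem \ref{polynome} together with the observation $P_\theta \in \mathbb{Z}[X_\Delta]$, so no new input beyond the excerpt is needed; the dimension count $n$ and irreducibility follow formally once the graph description is in place.
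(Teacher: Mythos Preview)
Your proof is correct and follows essentially the same approach as the paper. The only differences are cosmetic: you make the disjointness argument explicit (the paper simply asserts it is obvious from the definition), and you compute the dimension by describing $X_{W_a}[\lambda]$ as the graph of an affine map $\mathbb{R}^n \to \mathbb{R}^{m-n}$, whereas the paper phrases it as the intersection of $|\Phi^+|-|\Delta|$ linearly independent affine hyperplanes; these are two ways of saying the same thing.
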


\begin{proof}
From the way we defined 	$X_{W_a}$ it is obvious that $X_{W_a} = \bigsqcup\limits_{\lambda~\text{admissible}} X_{W_a}[\lambda]$. Moreover, being an element of $X_{W_a}[\lambda]$, for $\lambda$ admissible, is the same as being a solution of the system 
$$
\{ P_{\alpha}+\lambda_{\alpha}-X_{\alpha}=0~|~\alpha \in \Phi^+ \}.
$$

But the solutions of this system are exactly the solutions of the system 
$$
\{ P_{\alpha}-X_{\alpha}=0~|~\alpha \in \Phi^+ \} + (\lambda_{\alpha})_{\alpha \in \Phi^+},
$$
that is the solutions of the system $X_{W_a}[0]+ \lambda$. Since $X_{W_a}[0]+ \lambda$ is an affine space it is clear that it is an irreducible component of $X_{W_a}$. As $X_{W_a}[\lambda] := V(J_{W_a}[\lambda])$ we see that this is just the intersection of the hyperplanes associated to the polynomials $P_{\alpha}[\lambda_{\alpha}]$  for all $\alpha \in \Phi^+$. Since there are $|\Phi^+|-|\Delta|$ such polynomials and since they are linearly independent as linear forms on $\mathbb R^{|\Phi^+|}$, it follows that the dimension of $X_{W_a}[\lambda]$ is $|\Delta|$.
\end{proof}

\begin{notation}
Let $Y \subset \mathbb{R}^m$. We denote by $Y(\mathbb{Z})$ the set of integral points of $Y$. 
\end{notation}

\begin{theorem}\label{theorem admitted}
Let $\gamma$ be an admissible vector. The following points are equivalent
\begin{itemize}
\item[i)] $\gamma$ is admitted.
\item[ii)] There is $x \in X_{W_a}[\gamma]$ such that there exists $w \in W_a$ satisfying $x = (k(w,\alpha))_{\alpha \in \Phi^+}$.
\item[iii)] All the integral points of $X_{W_a}[\gamma]$ arise as in ii).
\end{itemize}
\end{theorem}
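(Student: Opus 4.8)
The plan is to prove the cycle of implications $iii) \Rightarrow ii) \Rightarrow i) \Rightarrow iii)$, using the fact (from Theorem \ref{polynome} and the preceding proposition) that the substitution $k(w,\theta) = P_\theta(w) + \lambda_\theta(w)$ translates the Shi inequalities $S[W_a]$ on the tuple $(k(w,\theta))_\theta$ into the same inequalities on the tuple $(\lambda_\theta(w))_\theta$, and the fact from Proposition \ref{simplification} that $X_{W_a}[\gamma] = X_{W_a}[0] + \gamma$, so that the integral points of $X_{W_a}[\gamma]$ are exactly the vectors $x = (x_\theta)_{\theta \in \Phi^+}$ with $x_\theta = P_\theta(x) + \gamma_\theta$, i.e. they are freely parametrized by their simple coordinates $(x_{\alpha_1},\dots,x_{\alpha_n}) \in \mathbb{Z}^n$.

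The implication $iii) \Rightarrow ii)$ is immediate once we check $X_{W_a}[\gamma](\mathbb{Z}) \neq \emptyset$, which holds because, given any choice of integers for the $n$ simple coordinates, the remaining coordinates are determined by the linear polynomials $P_\theta$ with integer coefficients, yielding an integral point. For $ii) \Rightarrow i)$: if $x = (k(w,\alpha))_{\alpha \in \Phi^+}$ for some $w \in W_a$, then by Shi's characterization (Theorem \ref{thJYS1}/\ref{thJYS3}) the tuple $(k(w,\alpha))_\alpha$ satisfies $S[W_a]$; since $x \in X_{W_a}[\gamma]$ we have $\gamma_\theta = \lambda_\theta(w)$ for all $\theta$, and the preceding proposition then gives that $(\lambda_\theta(w))_\theta = \gamma$ satisfies $S[W_a]$, i.e. $\gamma$ is admitted.

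The implication $i) \Rightarrow iii)$ is the substantive one, and I expect it to be the main obstacle. Suppose $\gamma$ is admitted and let $x \in X_{W_a}[\gamma](\mathbb{Z})$ be arbitrary; write $x_\theta = P_\theta(x) + \gamma_\theta$ for all $\theta \in \Phi^+$, with $x_\theta \in \mathbb{Z}$. We must produce $w \in W_a$ with $k(w,\theta) = x_\theta$ for all $\theta$. The strategy is to invoke Shi's characterization in the reverse direction: it suffices to show that $(x_\theta)_{\theta \in \Phi^+}$ satisfies the inequalities \eqref{Shi ineq simple} for every pair $\alpha,\beta \in \Phi^+$ with $\gamma := (\alpha^\vee + \beta^\vee)^\vee \in \Phi^+$ — wait, here I must be careful with notation collision, so let me call the root $\theta = (\alpha^\vee+\beta^\vee)^\vee$ — namely $x_\alpha + x_\beta \leq x_\theta \leq x_\alpha + x_\beta + 1$. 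Using $x_\theta = P_\theta(x) + \gamma_\theta$, $x_\alpha = P_\alpha(x) + \gamma_\alpha$, $x_\beta = P_\beta(x) + \gamma_\beta$ together with the additivity $P_\theta = P_\alpha + P_\beta$ from Theorem \ref{polynome}, these inequalities reduce exactly to $\gamma_\alpha + \gamma_\beta \leq \gamma_\theta \leq \gamma_\alpha + \gamma_\beta + 1$, which hold because $\gamma$ is admitted (satisfies $S[W_a]$). Hence by Theorem \ref{thJYS3} the tuple $(x_\theta)_\theta$ is the tuple of an alcove, i.e. there is $w \in W_a$ with $k(w,\theta) = x_\theta$. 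The delicate point to verify carefully is that the additivity relation $P_\theta = P_\alpha + P_\beta$ indeed holds for \emph{every} decomposition $\theta^\vee = \alpha^\vee + \beta^\vee$ arising in $S[W_a]$, not merely for the one chosen inductively in the proof of Theorem \ref{polynome}; this follows since any two linear polynomials with the prescribed evaluation on all tuples $(k(w,\alpha))$ coincide (the simple coordinates being free over $W_a$), but it should be spelled out. This closes the cycle and proves the theorem.
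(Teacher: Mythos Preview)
Your proof is correct and follows essentially the same approach as the paper: both rely on the reduction of the Shi inequalities $S[W_a]$ for a tuple $x_\theta = P_\theta(x) + \gamma_\theta$ to the same inequalities for $\gamma$, via the additivity $P_\theta = P_\alpha + P_\beta$ whenever $\theta^\vee = \alpha^\vee + \beta^\vee$, and then invoke Theorem~\ref{thJYS3} in both directions. The paper organizes the implications as $i)\Leftrightarrow ii)$ and then $ii)\Rightarrow iii)$ (with $iii)\Rightarrow ii)$ trivial), whereas you run a cycle $iii)\Rightarrow ii)\Rightarrow i)\Rightarrow iii)$ and are explicit about $X_{W_a}[\gamma](\mathbb{Z})\neq\emptyset$, but the substantive content is identical. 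Your caution about whether $P_\theta = P_\alpha + P_\beta$ holds for \emph{every} coroot decomposition (not just the one used inductively) is well placed; in the paper this is part of the statement of Theorem~\ref{polynome}~2) and can also be read off from Lemma~\ref{decomposition equations}, so you may simply cite it rather than re-argue it.
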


\begin{proof}
From Theorem \ref{thJYS3} we know that a $\Phi^+$-tuple of integers $(k(w,\alpha))_{\alpha \in \Phi^+}$ associated to an element $w \in W_a$ is entirely characterized by the system of inequalities $S[W_a]$. We also know that a $\Phi^+$-tuple $(k_{\alpha})_{\alpha \in \Phi^+} \in \mathbb{Z}^{m}$ satisfying the system $S[W_a]$ provides an element ${w \in W_a}$ such that $(k(w,\alpha))_{\alpha \in \Phi^+} = (k_{\alpha})_{\alpha \in \Phi^+}$.

Let $x=(x_{\alpha})_{\alpha \in \Phi^+} \in X_{W_a}(\mathbb{Z})[\gamma]$. We can write $x_{\alpha}$ as $x_{\alpha} = P_{\alpha}(x) + \gamma_{\alpha}$. Then, thanks to Proposition \ref{simplification} if $\gamma$ is admitted we have $x_{\alpha}$ that satisfies $S[W_a]$. However, satisfying $S[W_a]$ implies being a $\Phi^+$-tuple of an element of $W_a$. Thus, there exists $w \in W_a$ such that ${x = (k(w, \alpha))_{\alpha \in \Phi^+}}$. The direction $i) \Rightarrow ii)$ follows.

Conversely, if $x \in X_{W_a}[\gamma]$ such that $x=(k(w,\alpha))_{\alpha \in \Phi^+}$ for some $w \in W_a$, then we have again $k(w,\alpha) = P_{\alpha}(w) + \gamma_{\alpha}$ for all $\alpha \in \Phi^+$. Since $w \in W_a$, $(k(w, \alpha))_{\alpha \in \Phi^+}$ must satisfy $S[W_a]$, but once again because of Proposition \ref{simplification} this is equivalent to say that $\gamma = (\gamma_{\alpha})_{\alpha \in \Phi^+}$ is also a solution of $S[W_a]$, that is $\gamma$ is admitted. The direction $ii) \Rightarrow i)$ follows. 

If we have now a point $x'=(x'_{\alpha})_{\alpha \in \Phi^+} \in X_{W_a}(\mathbb{Z})[\gamma]$ with $x' \neq x$ then $x'$ decomposes as ${x'_{\alpha}=P_{\alpha}(x')+\gamma_{\alpha}}$ for all $\alpha \in \Phi^+$. However, since $x$ is coming from an element of $W_a$ we know that $\gamma$ is admitted, which means that it is a solution of $S[W_a]$. By using Proposition \ref{simplification} it follows that $(x'_{\alpha})_{\alpha \in \Phi^+}$ satisfies $S[W_a]$. Thus, $x'$ is coming from a point of $W_a$, that is there exists $w' \in W_a$ such that $x'=(k(w', \alpha))_{\alpha \in \Phi^+}$. This concludes the case $ii) \Rightarrow iii)$.
The direction $iii) \Rightarrow ii)$ is clear. 
\end{proof}

\begin{theorem}\label{TH central}
\text{1)} The map $\iota : {W_a \rightarrow X_{W_a}(\mathbb{Z})}$ defined by $w \longmapsto (k(w,\alpha))_{\alpha \in \Phi^+}$ induces by corestriction a bijective map from $W_a$ to the integral points of a subvariety of $X_{W_a}$, denoted $\widehat{X}_{W_a}$, which we call the Shi variety of $W_a$. This subvariety is nothing but $ \widehat{X}_{W_a} := \bigsqcup\limits_{\lambda~\text{admitted}} X_{W_a}[\lambda]$. In other words, one has the following diagram:

\begin{center}
\begin{tikzpicture} 

\node at (0, 0) {$W_a$} ;
\node at (4, 0) {$X_{W_a}(\mathbb{Z}	)$} ;
\node at (4, -2) {$\widehat{X}_{W_a}(\mathbb{Z}).$} ;
\node at (3, -0.8) {$\circlearrowleft$} ;
\node at (2, 0.2) {$\iota$} ; 
\node at (2,-0.99) {\rotatebox{-125}{$\wr$}} ;

\usetikzlibrary{arrows}
\draw [right hook->] (0.3,0) -- (3,0) ;
\draw [dashed, ->] (0.3,-0.2) -- (3,-1.7) ;
\draw [right hook ->] (4,-1.6) -- (4,-0.22) ; 

\end{tikzpicture}

\end{center}
\text{2)} Let $\widetilde{\alpha}= \sum\limits_{i=1}^nc_i\alpha_i$ be the highest  root in $\Phi^+$. The number of irreducible components of $\widehat{X}_{W_a}$ is $n!\prod\limits_{i=1}^nc_i$ .
\end{theorem}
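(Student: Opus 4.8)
\textbf{Proof plan for Theorem \ref{TH central}.}

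The plan is to treat the two parts separately. Part 1) is essentially a bookkeeping consequence of the results already assembled: by Theorem \ref{thJYS3}, an element $w\in W_a$ is determined by its $\Phi^+$-tuple $(k(w,\alpha))_{\alpha\in\Phi^+}$, and the set of integral tuples that occur is exactly the set of integer solutions of the system $S[W_a]$. First I would note that any such tuple, when written via Theorem \ref{polynome} as $k(w,\theta)=P_\theta(w)+\lambda_\theta(w)$, has its $\lambda$-part $(\lambda_\theta(w))_{\theta\in\Phi^+}$ lying in $\prod_\theta I_\theta$, i.e.\ it is admissible; hence the image $\iota(W_a)$ lands in $\bigsqcup_{\lambda\text{ admissible}}X_{W_a}[\lambda]=X_{W_a}$, and in fact by Theorem \ref{theorem admitted} (equivalence of i), ii), iii)) the nonempty fibers sit exactly over the admitted $\lambda$, so $\iota(W_a)\subset\bigsqcup_{\lambda\text{ admitted}}X_{W_a}[\lambda](\mathbb Z)$. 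Conversely, Theorem \ref{theorem admitted} iii) says every integral point of $X_{W_a}[\gamma]$ with $\gamma$ admitted comes from a (unique, by Theorem \ref{thJYS3}) element of $W_a$, so $\iota$ corestricts to a bijection onto $\widehat{X}_{W_a}(\mathbb Z):=\bigsqcup_{\lambda\text{ admitted}}X_{W_a}[\lambda](\mathbb Z)$. Injectivity is again immediate from Theorem \ref{thJYS3} (or from the argument already used in the proof of Theorem \ref{th Phi rep}). This yields the commuting diagram.

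For Part 2), by Proposition \ref{simplification} the irreducible components of $\widehat{X}_{W_a}$ are precisely the $X_{W_a}[\lambda]$ for $\lambda$ admitted, so I must count the admitted vectors. The strategy is to identify admitted vectors with alcoves. Indeed, a vector $\lambda$ is admissible iff $\lambda_\theta\in\llbracket 0,h(\theta^\vee)-1\rrbracket$ for all $\theta$, and admitted iff in addition $\lambda$ satisfies $S[W_a]$; but a $\Phi^+$-tuple of integers satisfying $S[W_a]$ is exactly a $\Phi^+$-tuple $(k(w,\theta))_{\theta\in\Phi^+}$ of some $w\in W_a$ (Theorem \ref{thJYS3}), and the extra admissibility constraint $\lambda_\theta\le h(\theta^\vee)-1$ together with $\lambda_\theta\ge 0$ for simple $\theta$ (where $k(w,\alpha_i)=\lambda_{\alpha_i}$) pins down exactly those $w$ whose alcove $A_w$ lies in a suitable fundamental domain for the translation part. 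More precisely, I would show that $\lambda$ is admitted iff $A_w\subset P_{\mathcal H^\vee}$, i.e.\ the admitted vectors are in bijection with $\mathrm{Alc}(P_{\mathcal H^\vee})$: the condition $0\le k(w,\alpha_i)$ for all simple $\alpha_i$ places $A_w$ on the correct side of each simple hyperplane $H_{\alpha_i,0}$, and the condition $\lambda_\theta\le h(\theta^\vee)-1$, which for the highest root $\widetilde\alpha$ (whose coroot has height one less than that governing the relevant bound) forces $A_w$ below the top wall of the fundamental box $\bigcap_{\alpha\in\Delta}H_{\alpha^\vee,0}^1$. Then by the cited formula $|\mathrm{Alc}(P_{\mathcal H^\vee})|=|W(\Delta)|/f_\Phi$, and since $f_\Phi=\det(\text{Cartan matrix})$ while the classical identity $|W(\Delta)|=n!\,\prod_i c_i\cdot\det(\text{Cartan matrix})$ holds (the product of the marks of the highest root times $n!$ times the index of connection equals $|W|$), we obtain $|\mathrm{Alc}(P_{\mathcal H^\vee})|=n!\prod_{i=1}^n c_i$.

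The main obstacle is the second step: cleanly establishing the bijection between admitted vectors and $\mathrm{Alc}(P_{\mathcal H^\vee})$, i.e.\ proving that the admissibility bounds $\lambda_\theta\in I_\theta$ are exactly equivalent, on the set of $\Phi^+$-tuples satisfying $S[W_a]$, to the geometric condition $A_w\subset P_{\mathcal H^\vee}$. One must check that the relevant boundary inequalities reduce to finitely many governing ones (typically those indexed by $\Delta$ and by the highest root/coroot), using $h(\theta^\vee)=h(\alpha^\vee)+h(\beta^\vee)$ additivity from Theorem \ref{polynome} together with the description of $P_{\mathcal H^\vee}=\bigcap_{\alpha\in\Delta}H_{\alpha^\vee,0}^1$, and one must be careful about the $\Phi$ versus $\Phi^\vee$ conventions in play (the Shi parameterization uses $H_{\alpha,k}$ with $(x,\alpha^\vee)=k$, so the natural fundamental box for the translation lattice is $P_{\mathcal H^\vee}$, not $P_{\mathcal H}$). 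Once that dictionary is in place, the final arithmetic identity $|W(\Delta)|/f_\Phi=n!\prod c_i$ is a standard fact about root systems (verifiable type-by-type, or via the formula $|W|=f_\Phi\cdot n!\cdot\prod c_i$), and the count follows.
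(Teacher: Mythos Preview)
Your Part 1) is correct and essentially identical to the paper's argument.

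For Part 2) your overall strategy (admitted vectors $\leftrightarrow$ alcoves in a fundamental parallelepiped, then use $|W|=f_\Phi\, n!\prod c_i$) is exactly the paper's, but two points are off.

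First, the box is $P_{\mathcal H}$, not $P_{\mathcal H^\vee}$. An admitted vector $\gamma$ has $\gamma_{\alpha_i}=0$ for all $\alpha_i\in\Delta$, and since $\gamma$ satisfies $S[W_a]$ there is a unique $w\in W_a$ with $k(w,\alpha)=\gamma_\alpha$; in particular $k(w,\alpha_i)=0$, i.e.\ $A_w\subset H_{\alpha_i,0}^1$ for every simple $\alpha_i$, which is precisely $A_w\subset P_{\mathcal H}=\bigcap_{\alpha\in\Delta}H_{\alpha,0}^1$. Your convention check is inverted: the hyperplanes $H_{\alpha,k}$ (with $(x,\alpha^\vee)=k$) are by definition the ones forming $\mathcal H$, so the box they cut out over $\Delta$ is $P_{\mathcal H}$. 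The set $\mathrm{Alc}(P_{\mathcal H^\vee})$ consists of alcoves of $W_a^\vee$, not of $W_a$, so your proposed bijection would not even make sense as stated.

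Second, the ``main obstacle'' you flag is not an obstacle. If $k(w,\alpha_i)=0$ for all $i$ then $P_\theta(w)=0$ for every $\theta$ (the $P_\theta$ are linear with no constant term), hence $k(w,\theta)=\lambda_\theta(w)\in I_\theta$ automatically by Theorem \ref{polynome}. Thus the non-simple admissibility bounds $\lambda_\theta\le h(\theta^\vee)-1$ impose nothing further, and the bijection \{admitted vectors\} $\leftrightarrow$ $\mathrm{Alc}(P_{\mathcal H})$ is immediate; there is no need to analyse the highest root separately. The paper then invokes the cited formula for $|\mathrm{Alc}(P_{\mathcal H^\vee})|$ and a short duality argument ($|W(\Phi)|=|W(\Phi^\vee)|$, $f_\Phi=f_{\Phi^\vee}$) to get $|\mathrm{Alc}(P_{\mathcal H})|=|W|/f_\Phi=n!\prod c_i$.
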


\begin{proof}
1) From the way we defined $X_{W_a}$ we know that each element of $W_a$, seen as $\Phi^+$-tuple, belongs to $X_{W_a}(\mathbb{Z})$. That is $\iota$ is well defined. Moreover, we also know because of Proposition 5.1 in \cite{JYS1} that $\iota$ is injective. Via Theorem \ref{theorem admitted}, one has $\iota(w) \in X_{W_a}[\lambda]$ if and only if $\lambda$ is admitted. Therefore, by deleting all the components $X_{W_a}[\gamma]$ with $\gamma$ admissible but non-admitted, $\iota$ becomes bijective.

2) The number of irreducible components of $\widehat{X}_{W_a}$ is equal to the number of admitted vectors. Moreover, by Theorem \ref{theorem admitted} we know that for each admitted vector  $\gamma = (\gamma_{\alpha})_{\alpha \in \Phi^+}$ there exists a unique $w \in W_a$ such that $\gamma_{\alpha} =k(w,\alpha)$ for all $\alpha \in \Phi^+$.  Since $\gamma_{\alpha} = 0$ for all $\alpha \in \Delta$, we have a bijection between the admitted vectors and the alcoves that belong to ${P_{\mathcal{H}} = \bigcap\limits_{\alpha \in \Delta}H_{\alpha,0}^1}$. Thanks to Section \ref{P} we know that 
$|\text{Alc}(P_{\mathcal{H}^{\vee}})|  = \frac{|W({\Delta})|}{f_{\Phi}}$. Therefore, by duality it follows that $|\text{Alc}(P_{\mathcal{H}})|  = \frac{|W({\Delta^{\vee}})|}{f_{\Phi^{\vee}}}$. However it is clear that $|W(\Phi^{\vee})| = |W(\Phi)|$, and since $f_{\Phi} = f_{\Phi^{\vee}}$ (see Section \ref{P}) it follows that $|\text{Alc}(P_{\mathcal{H}})|  = \frac{|W({\Delta})|}{f_{\Phi}}$. However the number $|W({\Delta})|$ is known and is equal to $f_{\Phi}n!\prod\limits_{i=1}^nc_i$ (see for example \cite{BOURB}, Ch. VI, $\S$ 2, prop. 7). The result follows.
\end{proof}

\medskip

\begin{notation}\label{notation compo vector}
For $\iota(w) \in \widehat{X}_{W_a}[\lambda]$ we denote $\lambda(w) := \lambda$. For instance $X_{W_a}[\lambda(s)]$ is the irreducible component having the $\Phi^+$-tuple $(k(s, \alpha))_{\alpha \in \Phi^+}$ with $s \in S$.
\end{notation}

\medskip

\begin{definition}
We define $H^0(\widehat{X}_{W_a})$ to be the set of irreducible components of $\widehat{X}_{W_a}$. For $\lambda=(\lambda_{\alpha})_{\alpha \in \Phi^+}$ an admitted vector we denote by $w_{\lambda}$ the associated element of $\text{Alc}(P_{\mathcal{H}})$, that is $w_{\lambda}$ is such that $k(w_{\lambda}, \alpha) = \lambda_{\alpha}$ for all $\alpha \in \Phi^+$. We give in the next table the number of irreducible components of any affine Weyl group.
\end{definition}

\begin{table}[h!]\label{TI}
\caption{Number of irreducible components of the Shi variety in any type.}
\begin{center}
\begin{tabular}{|c|c|c|c|c|}
\hline
 Type       &       Coefficients of $\widetilde{\alpha}$       &       Index of connection  &    $|W|$   &  $|H^0(\widehat{X}_{W_a})|$  \\
 \hline
 $A_n$     &   1,1,...,1                       								   &      $n+1 $                           &    $(n+1)! $          &      $n!$            \\
 \hline
$B_n$      &   1,2,2,...,2															&      2									&	  $2^{n}n!$     &             $2^{n-1}n!$        \\
\hline
$C_n$     &    2,2,...,2,1 													    &      2									&    $2^{n}n!$     &   $2^{n-1}n!$        \\
\hline
$D_n$     &    1,2,...,2,1,1 														&      4									&    $2^{n-1}n!$      &  $2^{n-3}n!$        \\
\hline
$E_6$      &   1,2,2,3,2,1														&      3									&	   $2^73^45$     &       $2^73^35$               \\ 
\hline
$E_7$      &   2,2,3,4,3,2,1														&      2									&    $2^{10}3^45\cdot7$     &    $2^{9}3^45\cdot7$            \\
\hline
$E_8$      &   2,3,4,6,5,4,3,2													&      1									&    $2^{14}3^55^27$     &        $2^{14}3^55^27$       \\
\hline
$F_4$      &   2,3,4,2																&      1 									&	  $2^73^2$     &     $2^73^2$              \\
\hline
$G_2$     &   3,2																	&      1									&    $2^23$     &  $2^23$                   \\
\hline
\end{tabular}
\end{center}
\end{table}

\begin{example}
Let us take the group $W(\widetilde{A}_2)$. The variety $\widehat{X}_{W(\widetilde{A}_2)}$ has two components which are drawn in Figure \ref{component A2} either in orange or in white. Thus, we have the decomposition as follows:
$$
\widehat{X}_{W(\widetilde{A}_2)} = X_{W(\widetilde{A}_2)}[(0,0,0)] \sqcup X_{W(\widetilde{A}_2)}[(0,0,1)].
$$

\vspace*{\stretch{1}}
\begin{figure}[h!]
\centering
\includegraphics[scale=0.2]{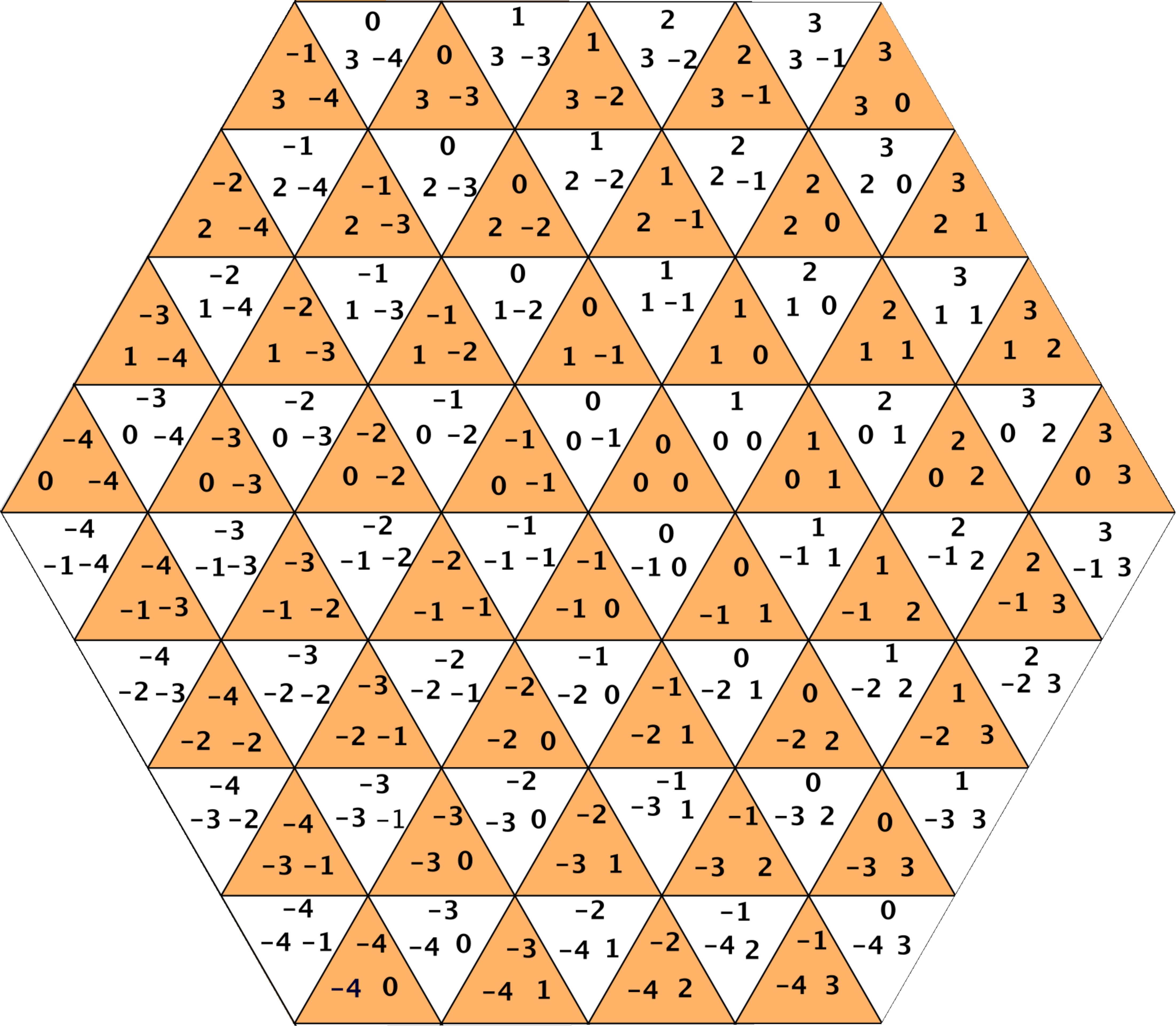} 
\caption{Irreducible components of $\widehat{X}_{W(\widetilde{A}_2)}$. }
\label{component A2}
\end{figure}
\vspace*{\stretch{1}}

\end{example}

\begin{example}
Let us take the group $W(\widetilde{B}_2)$. In this group there are 4 components, given by the 4 colors below. Indeed one has the following splitting according to the admitted vectors
  \begin{align*}
  \widehat{X}_{W(\widetilde{B}_2)} & =  X_{W(\widetilde{B}_2)}[(0,0,0,0)] ~\sqcup ~X_{W(\widetilde{B}_2)}[(0,0,1,0)] ~\sqcup  \\
  													& ~~\text{~} \text{~} \text{~}X_{W(\widetilde{B}_2)}[(0,0,1,1)] ~\sqcup~ X_{W(\widetilde{B}_2)}[(0,0,2,1)].
  \end{align*}

The first component corresponds with the pink, the second one with the yellow, the third one with the blue and the last one with the white. Moreover we can see that all the information is contained in the parallelogram where there are the 4 alcoves associated to the 4 admitted vectors. We also see that any reflection sends a component to another one. We will see in Proposition \ref{action isom} that it is not a coincidence. It is also possible to see that the lattice leaves stable the components. For example if we pick up the alcove $A_w=(0,0,0,0)$ and if we take $x=e_1+e_2$ then $A_{\tau_xw} = (0,2,2,2)$ and they are both of the same color pink.

\bigskip

\begin{figure}[h!]
\centering
\includegraphics[scale=0.35]{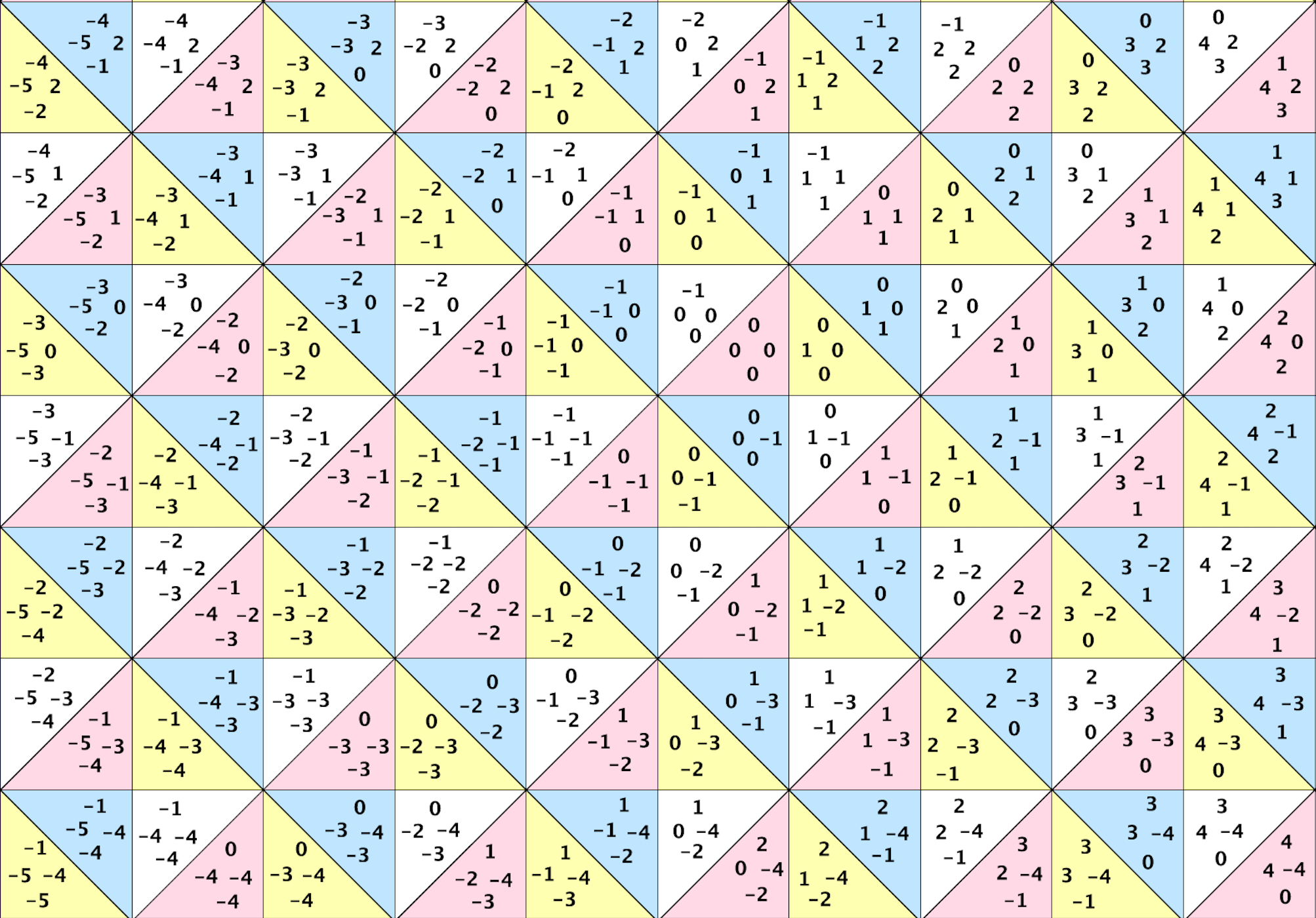} 
\caption{Irreducible components of $\widehat{X}_{W(\widetilde{B}_2)}$.}
\label{component B2}
\end{figure}
\end{example}

\bigskip

\begin{example}
Let us take the group $W(\widetilde{G}_2)$. We can see all the irreducible components in the polytope $P_{G_2}$. Here for example we have the 12 irreducible components of $W(\widetilde{G}_2)$. If we would color any element of the finite part $W(G_2)$ according to the components in which they are, we would see that all the colors appear in $W(G_2)$. The reason is that $f_{G_2} = 1$.

\begin{figure}[h!]
\centering
\includegraphics[scale=0.35]{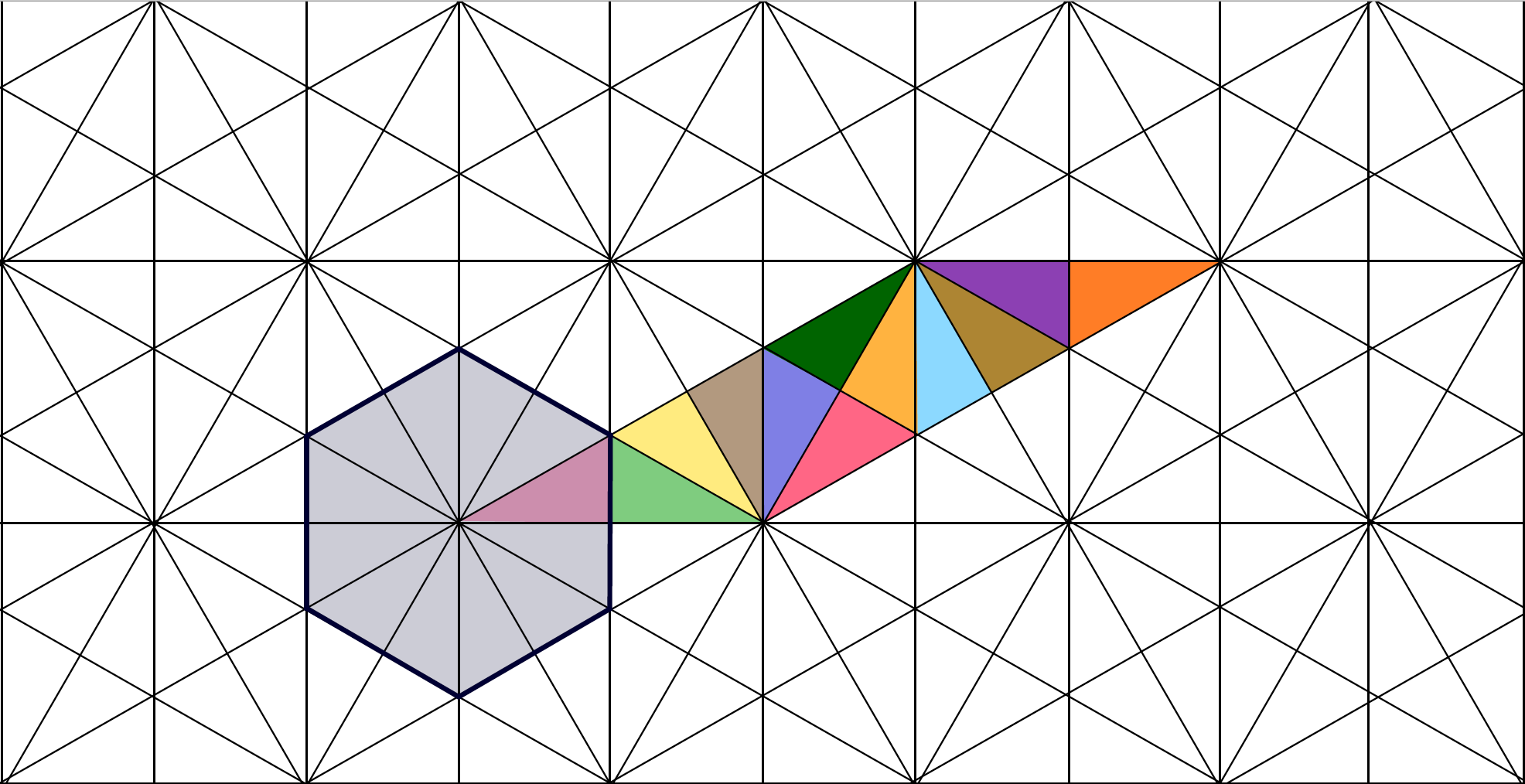} 
\caption{Alcoves of $P_{G_2}$ seen as set of representatives of irreducible components of $\widehat{X}_{W(\widetilde{G}_2)}$.}
\label{PG2}
\end{figure}
\end{example}

\subsection{Action on the components}
In this section we investigate the geometrical action associated to the $\Phi^+$-representation that we define in Proposition \ref{action isom}. We show in Proposition \ref{orbites} that this action is related to the index of connection of $\Phi$. The last result of this section shows where the Coxeter generators of $W$ lie in the Shi variety.

\begin{lemma}\label{decomposition equations}
Let $\theta$ be in $\Phi^+$. Then $\theta^{\vee} = P_{\theta}(\alpha_1^{\vee},\dots ,\alpha_n^{\vee})$.
\end{lemma}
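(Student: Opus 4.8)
The plan is to prove $\theta^\vee = P_\theta(\alpha_1^\vee,\dots,\alpha_n^\vee)$ by induction on the height $h(\theta^\vee)$, mirroring exactly the inductive construction of $P_\theta$ in the proof of Theorem \ref{polynome}. The base case is when $\theta \in \Delta$: then by definition $P_\theta = X_\theta$, so $P_\theta(\alpha_1^\vee,\dots,\alpha_n^\vee) = \alpha_\theta^\vee = \theta^\vee$, which is immediate (here $h(\theta^\vee)=1$).

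For the inductive step, suppose $\theta \in \Phi^+\setminus\Delta$ with $h(\theta^\vee) = d+1 \geq 2$. As noted in the proof of Theorem \ref{polynome}, any element of $(\Phi^\vee)^+\setminus\Delta^\vee$ can be written as $\theta^\vee = \alpha^\vee + \beta^\vee$ with $\alpha,\beta \in \Phi^+$, and then $h(\alpha^\vee), h(\beta^\vee) < h(\theta^\vee)$ because height is additive on coroots. The polynomial $P_\theta$ was defined precisely by $P_\theta := P_\alpha + P_\beta$ in that situation (this is exactly statement 2) of Theorem \ref{polynome}). By the induction hypothesis applied to $\alpha$ and $\beta$, we have $P_\alpha(\alpha_1^\vee,\dots,\alpha_n^\vee) = \alpha^\vee$ and $P_\beta(\alpha_1^\vee,\dots,\alpha_n^\vee) = \beta^\vee$. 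Since evaluation of polynomials is additive,
$$
P_\theta(\alpha_1^\vee,\dots,\alpha_n^\vee) = P_\alpha(\alpha_1^\vee,\dots,\alpha_n^\vee) + P_\beta(\alpha_1^\vee,\dots,\alpha_n^\vee) = \alpha^\vee + \beta^\vee = \theta^\vee,
$$
completing the induction.

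The only subtle point — and the place to be slightly careful — is that $P_\theta$ as constructed in Theorem \ref{polynome} a priori depends on the chosen decomposition $\theta^\vee = \alpha^\vee+\beta^\vee$, so one should note that this lemma in particular reconfirms that $P_\theta$ is well-defined (independent of the chosen splitting), since $P_\theta(\alpha_1^\vee,\dots,\alpha_n^\vee)$ determines the linear form $P_\theta$ uniquely: a linear polynomial in $\mathbb{Z}[X_{\Delta}]$ is determined by its coefficients, and these are recovered as the coordinates of $\theta^\vee$ in the basis $(\alpha_1^\vee,\dots,\alpha_n^\vee)$ of $\mathbb{Z}\Phi^\vee$. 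I expect no real obstacle here; the argument is a direct induction that piggybacks on the structure already set up in the proof of Theorem \ref{polynome}.
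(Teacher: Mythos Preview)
Your proof is correct and follows essentially the same inductive strategy as the paper: both argue by induction on $h(\theta^\vee)$ using the additivity $P_\theta = P_\alpha + P_\beta$ from Theorem \ref{polynome}. The only cosmetic difference is that the paper peels off one simple coroot at a time (taking $\beta^\vee = \alpha_i^\vee$) to arrive at the explicit formula $P_\theta = c_1 X_{\alpha_1}+\cdots+c_n X_{\alpha_n}$, whereas you allow an arbitrary splitting $\theta^\vee=\alpha^\vee+\beta^\vee$ and evaluate directly; your closing remark on well-definedness is a nice bonus not made explicit in the paper.
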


\begin{proof}
Let us write $\theta^{\vee} = c_1\alpha_1^{\vee} +\cdots+ c_n\alpha_n^{\vee}$ with $c_i \in \mathbb{N}$. Then there exists $i \in \llbracket 1,n \rrbracket$ such that $\mu^{\vee}:= c_1\alpha_1^{\vee} + \cdots + (c_{i}-1)\alpha_i^{\vee} +\cdots+c_n\alpha_n^{\vee} \in (\Phi^{\vee})^+$. Thus, by using Theorem \ref{polynome} we have $P_{\theta} = P_{\mu} + P_{\alpha_i}$. Proceeding by induction the same way on $\mu^{\vee}$  we get that ${P_{\theta} = c_1P_{\alpha_1} + \cdots + c_nP_{\alpha_n}}$. However $P_{\alpha_i} = X_{\alpha_i}$ for all $i \in \llbracket 1,n \rrbracket$. It follows that ${P_{\theta} = c_1X_{\alpha_1}+\cdots+c_nX_{\alpha_n}}$.
\end{proof}

\begin{lemma}\label{translation lemma}
Let $w=\tau_x\overline{w}$ be an element of $W_a$ such that $x \in \mathbb{Z}\Phi$ and $\overline{w} \in W$. Let $\lambda$ be an admitted vector. Assume that $(k(w,\alpha))_{\alpha \in \Phi^+} \in X_{W_a}[\lambda]$. Then $(k(\overline{w},\alpha))_{\alpha \in \Phi^+} \in X_{W_a}[\lambda]$. In particular $X_{W_a}[\lambda]$ is stable under the action of $\mathbb{Z}\Phi$.
\end{lemma}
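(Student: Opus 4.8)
The plan is to reduce the statement to the claim that the admitted vector attached to $w$ depends only on the finite part $\overline{w}$, and then verify this using the polynomial decomposition of Theorem \ref{polynome} together with the translation formula of Lemma \ref{proposition translation}. Concretely, recall that membership of $\iota(w)$ in $X_{W_a}[\lambda]$ means exactly that $\lambda_\theta = \lambda_\theta(w)$ for all $\theta \in \Phi^+$, where $k(w,\theta) = P_\theta(w) + \lambda_\theta(w)$ with $\lambda_\theta(w) \in I_\theta$; so it suffices to show $\lambda_\theta(w) = \lambda_\theta(\overline{w})$ for every $\theta \in \Phi^+$.

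First I would handle $\theta \in \Delta$: here $P_\theta = X_\theta$ and $\lambda_\theta \equiv 0$, so there is nothing to prove. For general $\theta \in \Phi^+$, write $\theta^\vee = c_1\alpha_1^\vee + \cdots + c_n\alpha_n^\vee$. By Lemma \ref{proposition translation}, $k(\tau_x\overline{w},\theta) = k(\overline{w},\theta) + (x,\theta^\vee)$, and since $x \in \mathbb{Z}\Phi$ and $\theta^\vee$ lies in the coroot lattice, $(x,\theta^\vee) \in \mathbb{Z}$; moreover applying the same formula to each simple root gives $k(\tau_x\overline{w},\alpha_i) = k(\overline{w},\alpha_i) + (x,\alpha_i^\vee)$. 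Using Lemma \ref{decomposition equations}, which gives $\theta^\vee = P_\theta(\alpha_1^\vee,\dots,\alpha_n^\vee)$, i.e. $(x,\theta^\vee) = \sum_i c_i (x,\alpha_i^\vee)$, and the linearity of $P_\theta$, I compute
\[
P_\theta(w) = P_\theta\big(k(w,\alpha_1),\dots,k(w,\alpha_n)\big) = P_\theta(\overline{w}) + \sum_{i=1}^n c_i (x,\alpha_i^\vee) = P_\theta(\overline{w}) + (x,\theta^\vee).
\]
Subtracting from $k(w,\theta) = k(\overline{w},\theta) + (x,\theta^\vee)$ and $k(\overline{w},\theta) = P_\theta(\overline{w}) + \lambda_\theta(\overline{w})$, the translation terms cancel and I obtain $\lambda_\theta(w) = k(w,\theta) - P_\theta(w) = k(\overline{w},\theta) - P_\theta(\overline{w}) = \lambda_\theta(\overline{w})$, as desired. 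Hence $\iota(\overline{w}) \in X_{W_a}[\lambda]$.

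Finally, for the stability statement: any element of $\mathbb{Z}\Phi$ acts on $W_a$ by left multiplication $w \mapsto \tau_y w$, and writing $w = \tau_x \overline{w}$ we have $\tau_y w = \tau_{x+y}\overline{w}$ with the same finite part $\overline{w}$; applying the paragraph above to both $w$ and $\tau_y w$ shows they have the same admitted vector, so $\mathbb{Z}\Phi$ preserves each $X_{W_a}[\lambda]$. The only mild subtlety — and the one point worth stating carefully — is the interchange of $P_\theta$ with the translation, i.e. verifying $P_\theta(w) - P_\theta(\overline{w}) = (x,\theta^\vee)$; this is exactly where Lemma \ref{decomposition equations} is used, and once that identity is in hand the rest is bookkeeping. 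No genuine obstacle is expected.
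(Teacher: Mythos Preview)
Your proof is correct and follows essentially the same route as the paper: reduce to showing $\lambda_\theta(w)=\lambda_\theta(\overline{w})$, then use Lemma~\ref{proposition translation} together with the linearity of $P_\theta$ and the identity $P_\theta(\alpha_1^\vee,\dots,\alpha_n^\vee)=\theta^\vee$ from Lemma~\ref{decomposition equations} to cancel the translation term $(x,\theta^\vee)$. The stability argument via $\tau_y w=\tau_{x+y}\overline{w}$ is also the same as the paper's, just phrased slightly more directly.
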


\begin{proof}
We just have to show that for any $\alpha \in \Phi^+$ one has $\lambda_{\alpha}(w) = \lambda_{\alpha}(\overline{w})$. We know that $k(w,\alpha) = P_{\alpha}(w) + \lambda_{\alpha}(w)$. For all  $\varepsilon \in \Delta$, thanks to Lemma \ref{proposition translation}, we have 
\begin{align*}
k(w, \varepsilon) &= k(\overline{w}, \varepsilon) + (x, \varepsilon^{\vee} ) \\
&= P_{\alpha}(\overline{w})+ \lambda_{\alpha}(\overline{w})+( x, \varepsilon^{\vee} ).
\end{align*}

However, we also have that 
\begin{align*}
k(w,\alpha)&= P_{\alpha}(w) + \lambda_{\alpha}(w)\\
&= P_{\alpha}(\{k(\overline{w},\varepsilon) +  ( x, \varepsilon^{\vee} ) \}_{\varepsilon \in \Delta}) + \lambda_{\alpha}(w).
\end{align*}

Since $P_{\alpha}$ is a linear polynomial we can split up the sum inside the variables and we get 
\begin{align*}
k(w, \alpha) & = P_{\alpha}(\{k(\overline{w},\varepsilon)\}_{\varepsilon \in \Delta})  + P_{\alpha}(\{( x, \varepsilon^{\vee} ) \}_{\varepsilon \in \Delta})+\lambda_{\alpha}(w) \\
                  & = k(\overline{w}, \alpha) - \lambda_{\alpha}(\overline{w}) +  P_{\alpha}(\{( x, \varepsilon^{\vee} ) \}_{\varepsilon \in \Delta})+\lambda_{\alpha}(w)\\
                  & = k(\overline{w}, \alpha) - \lambda_{\alpha}(\overline{w}) + ( x, P_{\alpha}(\{\varepsilon^{\vee}\}_{\varepsilon \in \Delta}) ) + \lambda_{\alpha}(w) \\
                  & = k(\overline{w}, \alpha) - \lambda_{\alpha}(\overline{w}) + ( x,\alpha^{\vee} ) + \lambda_{\alpha}(w). \\
\end{align*}
Finally it follows that $k(\overline{w}, \alpha) + ( x, \alpha^{\vee} ) = k(\overline{w}, \alpha) - \lambda_{\alpha}(\overline{w}) + ( x,\alpha^{\vee} ) + \lambda_{\alpha}(w)$ which is the same as $\lambda_{\alpha}(w) = \lambda_{\alpha}(\overline{w})$. 

For the second statement let us take $g \in W_a$ such that $(k(g,\alpha))_{\alpha \in \Phi^+} \in X_{W_a}[\lambda]$ and let us take $y \in \mathbb{Z}\Phi$.  What we have done just before implies that $(k(\overline{\tau_yg}, \alpha))_{\alpha \in \Phi^+} $ and $(k(\tau_yg, \alpha))_{\alpha \in \Phi^+}$ belong to the same component. Since $\overline{\tau_yg} = \overline{g}$ we get that $(k(\overline{g},\alpha))_{\alpha \in \Phi^+}$ and $(k(\tau_yg, \alpha))_{\alpha \in \Phi^+}$ are in the same component. But once again we know that $(k(\overline{g},\alpha))_{\alpha \in \Phi^+}$ and $(k(g,\alpha))_{\alpha \in \Phi^+}$ are in the same component.  It follows that $(k(g,\alpha))_{\alpha \in \Phi^+}$ and $(k(\tau_yg,\alpha))_{\alpha \in \Phi^+}$ are in the same component. Hence we have shown that $X_{W_a}[\lambda](\mathbb{Z})$ is stable under the action of $\mathbb{Z}\Phi$. It follows that $X_{W_a}[\lambda]$ is also stable under $\mathbb{Z}\Phi$.
\end{proof}

\begin{proposition}\label{action isom}
Let $F : W_a \hookrightarrow$ Isom$(\mathbb{R}^n)$ be the $\Phi^+$-representation of $W_a$. Then 
\begin{itemize}
\item[1)] $W_a$ acts naturally on the irreducible components of $\widehat{X}_{W_a}$ via the action defined as ${w\diamond X_{W_a}[\lambda] := F(w)(X_{W_a}[\lambda])}$ for $\lambda$ admitted. Furthermore if we assume that $w\in W_a$ decomposes as ${w=\tau_x\overline{w}}$,  then ${w \diamond X_{W_a}[\lambda]=\overline{w}\diamond X_{W_a}[\lambda]}$. Finally this action is transitive. 
\item[2)] The previous action induces an action on the admitted vectors by $w\diamond \lambda := \gamma$ such that $w\diamond X_{W_a}[\lambda] = X_{W_a}[\gamma]$. In other words we have $w\diamond X[\lambda]=X[w \diamond \lambda]$.
\end{itemize}
\end{proposition}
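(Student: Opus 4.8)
The plan is to prove the three assertions of Proposition \ref{action isom} in sequence, using the commuting diagram of the $\Phi^+$-representation (Theorem \ref{th Phi rep}) together with Lemma \ref{translation lemma}.

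\textbf{Step 1: $F(w)$ maps irreducible components to irreducible components.} First I would show that for $\lambda$ admitted, $F(w)(X_{W_a}[\lambda])$ is again one of the $X_{W_a}[\gamma]$ with $\gamma$ admitted. The key observation is that $F(w)$ is an affine isometry of $\mathbb{R}^m$ (Proposition \ref{proposition commutativity} 1)), and $X_{W_a}[\lambda] = X_{W_a}[0] + \lambda$ is an affine subspace of dimension $n$ by Proposition \ref{simplification}; hence $F(w)(X_{W_a}[\lambda])$ is an affine subspace of dimension $n$. To identify it as a component of $\widehat X_{W_a}$, I would use integral points: by the commuting diagram \eqref{diag}, $F(w)$ permutes $\iota(W_a) = \widehat X_{W_a}(\mathbb{Z})$, since $F(w)(\iota(v)) = \iota(L_w(v)) = \iota(wv)$ for every $v \in W_a$. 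So $F(w)$ sends the integral points of $X_{W_a}[\lambda]$ (which by Theorem \ref{theorem admitted} all arise as $\iota(v)$ for $v \in W_a$) bijectively onto a set of integral points of $\widehat X_{W_a}$, namely $\{\iota(wv) : \iota(v) \in X_{W_a}[\lambda]\}$. Since the components $X_{W_a}[\gamma]$ partition $\widehat X_{W_a}$ and each affine component $X_{W_a}[\gamma]$ is the affine hull of its (Zariski-dense, being a full-dimensional lattice of integral points — here I would note that $X_{W_a}[0]$ contains the integral point $0 = \iota(e)$ and that $X_{W_a}[\lambda](\mathbb{Z})$ spans $X_{W_a}[\lambda]$ affinely, e.g. because it is stable under $\mathbb{Z}\Phi$ by Lemma \ref{translation lemma} and $\mathbb{Z}\Phi$ has rank $n = \dim X_{W_a}[\lambda]$), a component is determined by any one of its integral points. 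Thus all the points $\iota(wv)$ lie in a single component $X_{W_a}[\gamma]$, and by dimension count and density $F(w)(X_{W_a}[\lambda]) = X_{W_a}[\gamma]$. This $\gamma$ is admitted because it contains an integral point of the form $\iota(wv)$.

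\textbf{Step 2: the action axioms and the reduction to $\overline w$.} That $\diamond$ is a group action follows from $F$ being a morphism (Theorem \ref{th Phi rep}): $F(w_1w_2) = F(w_1)F(w_2)$, so $w_1 \diamond (w_2 \diamond X_{W_a}[\lambda]) = F(w_1)(F(w_2)(X_{W_a}[\lambda])) = (w_1 w_2)\diamond X_{W_a}[\lambda]$, and $F(e) = \mathrm{id}$. For the reduction $w \diamond X_{W_a}[\lambda] = \overline w \diamond X_{W_a}[\lambda]$ when $w = \tau_x \overline w$: I would pick any integral point $\iota(v) \in X_{W_a}[\lambda]$; then $F(w)(\iota(v)) = \iota(wv) = \iota(\tau_x \overline w v)$. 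By Lemma \ref{translation lemma} applied to the element $\tau_x(\overline w v)$, the tuples $\iota(\tau_x \overline w v)$ and $\iota(\overline w v) = F(\overline w)(\iota(v))$ lie in the same component. Hence $w \diamond X_{W_a}[\lambda]$ and $\overline w \diamond X_{W_a}[\lambda]$ contain a common integral point, so they coincide.

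\textbf{Step 3: transitivity, and passage to admitted vectors.} For transitivity: given admitted $\lambda, \gamma$, let $w_\lambda, w_\gamma \in \mathrm{Alc}(P_{\mathcal H})$ be the associated alcoves, so $\iota(w_\lambda) = \lambda \in X_{W_a}[\lambda]$ and $\iota(w_\gamma) = \gamma \in X_{W_a}[\gamma]$. Then for $w := w_\gamma w_\lambda^{-1}$ we have $F(w)(\iota(w_\lambda)) = \iota(w w_\lambda) = \iota(w_\gamma) \in X_{W_a}[\gamma]$, so by Step 1, $w \diamond X_{W_a}[\lambda] = X_{W_a}[\gamma]$. Finally, part 2) is just a restatement: since $\diamond$ permutes the components and the components are indexed bijectively by admitted vectors, setting $w \diamond \lambda := \gamma$ where $X_{W_a}[\gamma] = w \diamond X_{W_a}[\lambda]$ gives a well-defined action with $w \diamond X_{W_a}[\lambda] = X_{W_a}[w \diamond \lambda]$ by construction.

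The main obstacle is Step 1 — specifically, justifying cleanly that $F(w)$ maps \emph{components to components} rather than merely mapping the integral lattice of one component into $\widehat X_{W_a}(\mathbb{Z})$. The careful point is that an affine component is pinned down by a single one of its integral points (because the integral points of each $X_{W_a}[\lambda]$ are Zariski/affinely dense in it, which follows from $\mathbb{Z}\Phi$-stability via Lemma \ref{translation lemma} and a rank count), so that the image, being a full-dimensional affine subspace all of whose integral points lie in one component, must equal that component. Once this density/dimension argument is in place, Steps 2 and 3 are formal consequences of $F$ being a morphism and of Lemma \ref{translation lemma}.
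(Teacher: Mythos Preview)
Your proposal is correct and follows essentially the same route as the paper: both use the commuting diagram of Theorem \ref{th Phi rep} to see that $F(w)$ carries integral points of a component into $\widehat X_{W_a}(\mathbb Z)$, observe that $F(w)$ is an affine isometry so the image is an $n$-dimensional affine subspace, and then close by a dimension argument; the reduction to $\overline w$ via Lemma \ref{translation lemma} and the transitivity are handled identically. The one technical difference in Step 1 is that the paper first proves the two-sided equality $(F(w)(X_{W_a}[\lambda]))(\mathbb Z)=F(w)(X_{W_a}[\lambda](\mathbb Z))$ by noting that $F(w^{-1})$ also has integer matrix and translation part, whereas you work only with the forward inclusion and rely instead on the $\mathbb Z\Phi$-orbit already affinely spanning the component. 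One caution: your sentence ``Thus all the points $\iota(wv)$ lie in a single component'' is asserted before it is justified --- to make it rigorous, insert that the image set is Zariski-dense in the affine space $F(w)(X_{W_a}[\lambda])$ and that $\widehat X_{W_a}$ is Zariski-closed, so $F(w)(X_{W_a}[\lambda])\subseteq \widehat X_{W_a}$, after which irreducibility of an affine space forces it into a single $X_{W_a}[\gamma]$.
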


\begin{proof}

1) Let $x \in W_a$ such that $(k(x,\alpha))_{\alpha \in \Phi^+} \in X_{W_a}[\lambda]$. For any $w \in W_a$ there exists an admitted vector  $\gamma$ such that $(k(wx,\alpha))_{\alpha \in \Phi^+} \in X_{W_a}[\gamma]$. However, since we know that $F(w)(x) = (k(wx,\alpha))_{\alpha \in \Phi^+}$, it follows that $F(w)$ sends at least one element of  $X_{W_a}[\lambda]$ to $X_{W_a}[\gamma]$. Moreover $F(w)$ is an isometry and then $F(w)(X_{W_a}[\lambda])$ must be an affine space of the same dimension as  $X_{W_a}[\lambda]$. 

Let us show now that $(F(w)(X_{W_a}[\lambda]))(\mathbb{Z}) = F(w)(X_{W_a}[\lambda](\mathbb{Z}))$. First of all the inclusion $  F(w)(X_{W_a}[\lambda](\mathbb{Z})) \subset (F(w)(X_{W_a}[\lambda]))(\mathbb{Z})$ is clear. With respect to the other inclusion, let us take $z \in (F(w)(X_{W_a}[\lambda]))(\mathbb{Z})$. Then we have $F(w)^{-1}(z) \in X_{W_a}[\lambda]$ but $F(w)^{-1}=F(w^{-1})$ and the coefficients of the matrix associated to $F(w^{-1})$ are integers together with the coordinates of the translation part. It follows that $F(w)^{-1}(z) \in X_{W_a}[\lambda](\mathbb{Z})$ and thus ${z \in F(w)(X_{W_a}[\lambda](\mathbb{Z}))}$. The equality follows.

Finally, $F(w)(X_{W_a}[\lambda])$ is an affine space of dimension $|\Delta|$ all of whose integer points are associated to elements of $W_a$. It follows that $F(w)(X_{W_a}[\lambda])$ must be an irreducible component of $\widehat{X}_{W_a}$. However  $(k(wx,\alpha))_{\alpha \in \Phi^+} \in X_{W_a}[\gamma]$, then since the components of $X_{W_a}$ have no intersection we must have  ${F(w)(X_{W_a}[\lambda])=X_{W_a}[\gamma]}$. Thus, we have shown that the following map is well defined 
$$
\begin{array}{ccc}
W_a \times H^0(\widehat{X}_{W_a}) & \longrightarrow & H^0(\widehat{X}_{W_a}) \\
(w, X_{W_a}[\lambda]) & \longmapsto & w \diamond X_{W_a}[\lambda].
\end{array}
$$

Since $F$ is a morphism we have 
$$
e \diamond X_{W_a}[\lambda] = F(e)(X_{W_a}[\lambda]) = id(X_{W_a}[\lambda]) = X_{W_a}[\lambda],
$$ and for all $w, w' \in W_a$ we also have 
\begin{align*}
ww' \diamond X_{W_a}[\lambda] &= F(ww')(X_{W_a}[\lambda]) \\ &= F(w)F(w')(X_{W_a}[\lambda]) = F(w)(F(w')(X_{W_a}[\lambda])) \\
													& = w\diamond (w'\diamond X_{W_a}[\lambda]).
\end{align*}

This concludes the first statement of 1). With respect to the second statement we just have to see that the action of $F(\tau_x)$ doesn't do anything on the components. By Lemma \ref{translation lemma} we know that $X_{W_a}[\lambda]$ is stable under $\mathbb{Z}\Phi$. It follows that $F(\tau_x)(X_{W_a}[\lambda]) = X_{W_a}[\lambda]$.

To show the transitivity we just have to see that we can obtain any component of $H^0(\widehat{X}_{W_a})$ from the component $X_{W_a}[0]$. Let $X_{W_a}[\gamma]$ be such a component and $(k(g,\alpha))_{\alpha \in \Phi^+} \in X_{W_a}[\gamma]$. It follows that $(k(\overline{g},\alpha))_{\alpha \in \Phi^+} \in X_{W_a}[\gamma]$. As $\overline{g}=\overline{g}e$ and since $(k(e,\alpha))_{\alpha \in \Phi^+} \in X_{W_a}[0]$ it follows that $F(\overline{g})(X_{W_a}[0]) = X_{W_a}[\gamma]$, that is $\overline{g}\diamond X_{W_a}[0]=X_{W_a}[\gamma]$.
\item[2)]This is a straightforward consequence of the first point.
\end{proof}

\begin{proposition}\label{orbites}
Let $f_{\Phi}$ be the index of connection of $\Phi$ and let $\lambda$ be an admitted vector. 
\begin{itemize}
\item[1)] In W there are exactly $f_{\Phi}$ elements belonging to  $X_{W_a}[\lambda]$.
\item[2)] $X_{W_a}[\lambda]$ has exactly $f_{\Phi}$ orbits under the action of $\mathbb{Z}\Phi$.
\item[3)] Each orbit of $X_{W_a}[\lambda]$ has only one element of $W$.
\end{itemize}
\end{proposition}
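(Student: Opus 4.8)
I would prove part 1) first, as it carries the only non‑formal content, and then deduce parts 2) and 3) from it using the semidirect product structure $W_a=\mathbb{Z}\Phi\rtimes W$ together with Lemma \ref{translation lemma}.

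For part 1), the idea is to count the elements $w\in W$ with $\iota(w)\in X_{W_a}[\lambda]$ by an orbit–stabilizer argument for the action $\diamond$ of $W$ on $H^0(\widehat X_{W_a})$. First, $\iota(e)=(k(e,\alpha))_{\alpha\in\Phi^+}=0$ lies in $X_{W_a}[0]$, so $0$ is admitted. For $w\in W$ the commutative diagram of Theorem \ref{th Phi rep} gives $\iota(w)=\iota(L_w(e))=F(w)(\iota(e))=F(w)(0)$, so $\iota(w)\in F(w)(X_{W_a}[0])=w\diamond X_{W_a}[0]$, a single irreducible component by Proposition \ref{action isom}. Since the components of $\widehat X_{W_a}$ are pairwise disjoint, $\iota(w)\in X_{W_a}[\lambda]$ holds if and only if $w\diamond X_{W_a}[0]=X_{W_a}[\lambda]$. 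The action $\diamond$ of $W$ on $H^0(\widehat X_{W_a})$ is transitive (the $W_a$-action is transitive by Proposition \ref{action isom}, and it factors through $W$ there), so after fixing some $w_0\in W$ with $w_0\diamond X_{W_a}[0]=X_{W_a}[\lambda]$, the set we want is exactly the coset $w_0\cdot\mathrm{Stab}_W(X_{W_a}[0])$; by orbit–stabilizer its cardinality is $|W|/|H^0(\widehat X_{W_a})|$. Combining the count $|H^0(\widehat X_{W_a})|=n!\prod_i c_i$ of Theorem \ref{TH central} with the identity $|W|=f_\Phi\, n!\prod_i c_i$ used in its proof, this equals $f_\Phi$.

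For part 3), I would note that, transported through $\iota$, the action of $\mathbb{Z}\Phi$ on $X_{W_a}[\lambda](\mathbb{Z})$ is left translation on the corresponding elements of $W_a$: for $x\in\mathbb{Z}\Phi$ and $w=\tau_y\overline w$ one has $\tau_x w=\tau_{x+y}\overline w$, so the orbit of $w$ is $\{\tau_z\overline w\mid z\in\mathbb{Z}\Phi\}$, which meets $W$ in exactly $\{\overline w\}$ by uniqueness of the semidirect product decomposition; and $\overline w$ lies in $X_{W_a}[\lambda]$ whenever $w$ does, by Lemma \ref{translation lemma}. Hence every $\mathbb{Z}\Phi$-orbit in $X_{W_a}[\lambda](\mathbb{Z})$ contains exactly one element of $W$, which is 3). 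Part 2) is then immediate: by Lemma \ref{translation lemma} the set $X_{W_a}[\lambda](\mathbb{Z})$ is a disjoint union of $\mathbb{Z}\Phi$-orbits, each meeting $W$ in a single element, so the number of orbits equals the number of elements of $W$ in $X_{W_a}[\lambda]$, namely $f_\Phi$ by 1).

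The main obstacle is essentially part 1), and even there the only genuine input is $|H^0(\widehat X_{W_a})|=|W|/f_\Phi$, which Theorem \ref{TH central} already supplies; the remainder is formal. The point needing care is that the $\mathbb{Z}\Phi$-action in 2) and 3) is to be read on the integral points $X_{W_a}[\lambda](\mathbb{Z})$ — on the whole real affine space $X_{W_a}[\lambda]$ there would be infinitely many orbits — and that this action is exactly left translation on the associated subset of $W_a$, i.e. the action appearing in Lemma \ref{translation lemma}.
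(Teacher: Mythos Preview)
Your proposal is correct and follows essentially the same route as the paper: both arguments use that the $W_a$-action on $H^0(\widehat X_{W_a})$ factors through a transitive $W$-action (Proposition \ref{action isom}) together with the count $|H^0(\widehat X_{W_a})|=|W|/f_\Phi$ from Theorem \ref{TH central} to get part 1), and then use the semidirect product decomposition and Lemma \ref{translation lemma} for parts 2) and 3). The only cosmetic differences are that you phrase part 1) via orbit--stabilizer (identifying the set as a coset of $\mathrm{Stab}_W(X_{W_a}[0])$) where the paper argues directly that all components contain the same number of elements of $W$, and that you deduce 2) from 3) rather than the other way around.
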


\begin{proof}
1) First of all we note that $W \cap \iota^{-1}(X_{W_a}[\lambda](\mathbb{Z})) \neq \emptyset$. Indeed, for any $w=\tau_x\overline{w} \in \iota^{-1}(X_{W_a}[\lambda](\mathbb{Z}))$ we know by Lemma \ref{translation lemma} that $(k(\overline{w},\alpha))_{\alpha \in \Phi^+}$ is also in  $X_{W_a}[\lambda]$, that is $\overline{w} \in W \cap \iota^{-1}(X_{W_a}[\lambda](\mathbb{Z}))$. We need now to show that if we take another admitted vector $\gamma$ then $|W \cap \iota^{-1}(X_{W_a}[\lambda](\mathbb{Z}))| =|W \cap \iota^{-1}(X_{W_a}[\gamma](\mathbb{Z}))| $. But this is true because any component can be sent to any other one via an element of $W$. In particular there exists $g \in W$ such that ${g\diamond X_{W_a}[\lambda] = X_{W_a}[\gamma]}$. Further, if $(k(w,\alpha))_{\alpha \in \phi^+} \in X_{W_a}[\lambda]$ with $w \in W$, then $F(g)((k(w,\alpha))_{\alpha \in \phi^+})$ is also associated to an element of $W$. The result follows since $g$ is a bijection. By Theorem \ref{TH central} we know that there are $n!\prod\limits_{i=1}^nc_i$ irreducible components in $\widehat{X}_{W_a}$ where $\widetilde{\alpha}= \sum\limits_{i=1}^nc_i\alpha_i$ is the longest root in $\Phi^+$. However it is well known 
 that $|W| = n!(\prod\limits_{i=1}^nc_i)f_{\Phi}$. Since all the irreducible components have same number of elements in $W$, we deduce that this number is the quotient of $|W|$ by the number of irreducible components, which is exactly $f_{\Phi}$.

2) This is a consequence of the point above. Let us take the $f_{\Phi}$ elements $\{ w_1,\dots, w_{f_{\Phi}} \}$ of ${W \cap \iota^{-1}(X_{W_a}[\lambda](\mathbb{Z}))}$. Then the $\mathbb{Z}\Phi$-orbits $O_{w_i}$ and $O_{w_j}$ are different for all $i,j \in \{1,\dots ,f_{\Phi}\}$. Indeed, if there was an element belonging to both then it would follow that there exists $x \in \mathbb{Z}\Phi$ such that $w_i=\tau_xw_j$, but this is impossible because $w_i$ and $w_j$ are in the finite group $W$. We therefore have $f_{\Phi}$ distinct orbits. We claim that there is no other orbit. Indeed, let $x \in X_{W_a}[\lambda]$. Then there exists $w\in W_a$ such that $x=\iota(w)$. Moreover, one can write $w=\tau_y\overline{w}$ with $y \in \mathbb{Z}\Phi$ and $\overline{w} \in W$. This implies that $\iota(w)$ and $\iota(\overline{w})$ are in the same orbit $O_x$. However, since $\overline{w} \in W \cap \iota^{-1}(X_{W_a}[\lambda](\mathbb{Z}))$ it follows that $O_x = O_{w_i}$ for some $i=1,\dots ,f_{\Phi}$.

3) Assume that we have an orbit in $X_{W_a}[\lambda]$ with at least two elements $w_1$, $w_2$ coming from $W$. Then there exists $x \in \mathbb{Z}\Phi$ such that $w_2=\tau_xw_1$. Since $w_1$ and $w_2 \in W$ the previous equality cannot occur. It follows that each orbit has at most one element of $W$. Moreover, by the first point above we know that $X_{W_a}[\lambda]$ has $f_{\phi}$ elements of $W$. Since there are $f_{\phi}$ orbits in $X_{W_a}[\lambda]$ it follows that each orbit has a unique element of $W$.
\end{proof}

\medskip

\begin{proposition}\label{prop position generateur}
Let $W_a$ be an affine Weyl group such that $W_a \neq W(\widetilde{A}_2)$. Let $s_i, s_j \in S$ such that $s_i \neq s_j$. Then $\widehat{X}_{W_a}[\lambda(s_i)] \neq  \widehat{X}_{W_a}[\lambda(s_j)]$ (where the meaning of $X_{W_a}[\lambda(s_i)]$ is defined in Notation \ref{notation compo vector}).
\end{proposition}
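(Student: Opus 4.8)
The plan is to compute explicitly the admitted vector $\lambda(s_i)$ attached to each finite Coxeter generator $s_i$ ($i\in\llbracket 1,n\rrbracket$), reduce the statement to a purely combinatorial property of the simple-coroot coordinates of positive coroots, and then dispatch that property by a short inspection of the Dynkin diagram. For the reduction itself: by Proposition~\ref{simplification} the pieces $X_{W_a}[\lambda]=X_{W_a}[0]+\lambda$, for $\lambda$ admissible, are non-empty and pairwise disjoint, so distinct admissible vectors index distinct components; since $\lambda(s_i)$ and $\lambda(s_j)$ are admitted (hence admissible and indexing components of $\widehat{X}_{W_a}$), it suffices to prove that $\lambda(s_i)\neq\lambda(s_j)$ whenever $i\neq j$ and $W_a\neq W(\widetilde{A}_2)$.

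To pin down $\lambda(s_i)$, I would write, for $\theta\in\Phi^+$, $\theta^\vee=\sum_{k=1}^n c_k(\theta)\,\alpha_k^\vee$ with $c_k(\theta)\in\mathbb{N}$; by Lemma~\ref{decomposition equations} the linear polynomial of Theorem~\ref{polynome} is then $P_\theta=\sum_{k=1}^n c_k(\theta)\,X_{\alpha_k}$. By Corollary~\ref{corollaire formule 3.3.1} we have $k(s_i,\alpha_k)=-\delta_{ik}$ for $1\le k\le n$, and $k(s_i,\theta)=0$ for every $\theta\in\Phi^+\setminus\Delta$ (such a $\theta$ being neither $\alpha_i$ nor $-\alpha_i$). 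Feeding this into the decomposition $k(s_i,\theta)=P_\theta(s_i)+\lambda_\theta(s_i)$ of Theorem~\ref{polynome} gives $P_\theta(s_i)=-c_i(\theta)$, hence
\[
\lambda_\theta(s_i)=
\begin{cases}
0 & \text{if } \theta\in\Delta,\\
c_i(\theta) & \text{if } \theta\in\Phi^+\setminus\Delta.
\end{cases}
\]
Thus $\lambda(s_i)=\lambda(s_j)$ if and only if $c_i(\theta)=c_j(\theta)$ for all $\theta\in\Phi^+\setminus\Delta$, and the proposition becomes equivalent to the following: when $W_a\neq W(\widetilde{A}_2)$, for any $i\neq j$ there is a non-simple positive coroot in which $\alpha_i^\vee$ and $\alpha_j^\vee$ occur with different coefficients.

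I would prove this last assertion inside the irreducible root system $\Phi^\vee$ with simple system $\{\alpha_1^\vee,\dots,\alpha_n^\vee\}$, whose Dynkin diagram is a connected graph on $n$ nodes sharing the edges of that of $\Phi$, using the standard fact that two adjacent simple roots sum to a positive non-simple root. If $n\ge 3$: either $\alpha_i^\vee$ has a neighbour $\alpha_k^\vee$ with $k\neq j$, and then $\alpha_i^\vee+\alpha_k^\vee$ is a non-simple positive coroot with $c_i=1\neq 0=c_j$; or $\alpha_j^\vee$ is the only neighbour of $\alpha_i^\vee$, and then, the diagram being connected on at least three nodes, $\alpha_j^\vee$ has a neighbour $\alpha_l^\vee\neq\alpha_i^\vee$, so $\alpha_j^\vee+\alpha_l^\vee$ has $c_j=1\neq 0=c_i$. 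If $n=2$ and $\Phi\neq A_2$, then $\Phi^\vee$ is of type $B_2$, $C_2$ or $G_2$, and the highest root of $\Phi^\vee$ already has unequal simple-coroot coordinates (the unordered pair $\{1,2\}$ in types $B_2,C_2$, and $\{2,3\}$ in type $G_2$; see \cite{BOURB}). This also exhibits why $W(\widetilde{A}_2)$ is excluded: there $\Phi^\vee=A_2$, the unique non-simple positive coroot is $\alpha_1^\vee+\alpha_2^\vee$, and its coordinate vector $(1,1)$ forces $\lambda(s_1)=\lambda(s_2)$.

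The main obstacle is the rank-two analysis together with the passage to the dual root system: one must check that \emph{rank}, \emph{adjacency of simple roots} and \emph{being non-simple} are invariants of the Dynkin diagram shared by $\Phi$ and $\Phi^\vee$, and that $A_2$ is the only self-dual rank-two type whose highest root has all coordinates equal, so that the genuine exception is exactly $W(\widetilde{A}_2)$. Apart from that, everything reduces to the bookkeeping of the second paragraph, which merely repackages Corollary~\ref{corollaire formule 3.3.1}, Theorem~\ref{polynome} and Lemma~\ref{decomposition equations}.
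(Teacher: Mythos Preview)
Your proof is correct and follows essentially the same line as the paper's: both reduce the question, via Theorem~\ref{polynome}, Lemma~\ref{decomposition equations} and Corollary~\ref{corollaire formule 3.3.1}, to the purely combinatorial claim that for $i\neq j$ some non-simple positive coroot has different $\alpha_i^\vee$- and $\alpha_j^\vee$-coefficients. The only difference is in how that claim is justified: the paper checks the rank-two cases by inspection of Figures~\ref{component B2} and~\ref{PG2} and for rank $\geq 3$ asserts the (slightly stronger) statement that one coefficient vanishes while the other does not, leaving it as a case-by-case fact; you instead give a uniform Dynkin-diagram argument for rank $\geq 3$ (using a neighbour of $\alpha_i^\vee$ or of $\alpha_j^\vee$) and handle rank two by reading off the highest-coroot coordinates, which is a clean self-contained replacement for the paper's appeal to figures and case checks.
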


\begin{proof}
For the $B_2$ ($=C_2$) and $G_2$ cases one can directly see the result on Figures \ref{component B2} and \ref{PG2}. We assume now that the rank of $\Phi$ is greater or equal than 3. Let us assume that $s_i$ and $s_j$ are in the same component $\widehat{X}_{W_a}[\lambda(s_i)]$. We claim that there exists a root $\theta \in \Phi^+\setminus \Delta$ such that $P_{\theta}(s_i) \neq P_{\theta}(s_j)$. Assuming this claim, because of Theorem \ref{polynome} and Corollary \ref{corollaire formule 3.3.1} we have $k(s_i,\theta) = P_{\theta}(s_i) + \lambda_{\theta}(s_i)=0$ and ${k(s_j,\theta) = P_{\theta}(s_j) + \lambda_{\theta}(s_j)=0}$, which implies that $\lambda_{\theta}(s_i) \neq \lambda_{\theta}(s_j)$. However, since $s_i$ and $s_j $ are in $\widehat{X}_{W_a}[\lambda(s_i)]$, we must have $\lambda(s_i) = \lambda(s_j)$ and then  $\lambda_{\alpha}(s_i) = \lambda_{\alpha}(s_j)$ for all $\alpha \in \Phi^+\setminus \Delta$. Therefore, $s_i$ and $s_j$ cannot lie in the same component.

\textit{Proof of the claim}.  Let $\Phi$ be a root system satisfying rank$(\Phi) \geq 3$. This is a general fact, which can be checked case by case, that in any such a crystallographic root system there always exists a root $\theta \in \Phi^+\setminus \Delta$ such that either $\alpha_i$ is in the expression of $\theta$ (according to $\Delta$) but $\alpha_j$ is not, or $\alpha_j$ is in the expression of $\theta$ (according to $\Delta$) but $\alpha_i$ is not. We apply this statement on the dual root system of $\Phi$.

Let $\theta \in \Phi^+\setminus \Delta$ such that $\alpha_i^{\vee}$ is in the expression of $\theta^{\vee}$ (according to $\Delta^{\vee}$) but $\alpha_j^{\vee}$ is not, the other case being symmetric. Thus, because of Lemma \ref{decomposition equations} we have $X_{\alpha_i}$ in the expression of $P_{\theta}$, whereas $X_{\alpha_j}$ doesn't appear in this expression. Again because of Lemma \ref{decomposition equations} we know that all the coefficients of the polynomial $P_{\theta}$ are positive. Therefore, since $k(s_i, \alpha) = 0$ for all $\alpha \in \Phi^+\setminus \{\alpha_i\}$ and $k(s_i,\alpha_i) = -1$ we have $P_{\theta}(s_i) \leq -1$. Moreover, we also have   $k(s_j, \alpha) = 0$ for all $\alpha \in \Phi^+\setminus \{\alpha_j\}$ and $k(s_j,\alpha_j) = -1$, which implies that $P_{\theta}(s_j) = 0$. 
\end{proof}

\medskip

\begin{remark}
The $A_2$ case fails because there are not enough roots. See Figure \ref{component A2} where the generators $s_0=(0,0,1)$, $s_1 = (-1,0,0)$, and $s_2=(0,-1,0)$ lie in the same component. The reading direction according to Figure \ref{dessin1} is as follows: $(\alpha, \beta, \alpha + \beta)$.
\end{remark}

\newpage

\textbf{Acknowledgements}. This work was initiated in the LACIM (Montréal) under the supervision of Christophe Hohlweg and benefitted from a lot of discussions with Antoine Abram, Robert Bédard, François Bergeron, Nicolas England, Christophe Reutenauer, Franco Saliola and Hugh Thomas. The author would like to thank Christophe Hohlweg and Hugh Thomas for answering many questions and for many helpful comments. The author is particularly grateful to Antoine Abram, Christophe Reutenauer and Hugh Thomas for valuable discussions and questions, which led to many interesting results. I also wish to thank Riccardo Biagioli and Matthew Dyer for some helpful comments and communications.

This work was partially supported by NSERC grants and by the LACIM.

\begin{itemize}
\item[]
\end{itemize}

   \noindent Nathan Chapelier-Laget \\
   D\'epartement de math\'ematiques \\
   Universit\'e du Qu\'ebec \`a Montr\'eal \\
   \email{nathan.chapelier@gmail.com}

\nocite{*}
\bibliographystyle{plain}
\bibliography{shi_variety.bib}

\end{document}